\newtheorem{theorem}{Theorem}[section]
\newtheorem{definition}[theorem]{Definition}
\newtheorem{lemma}[theorem]{Lemma}
\newtheorem{corollary}[theorem]{Corollary}
\newtheorem{proposition}[theorem]{Proposition}
\theoremstyle{definition}
\newcommand{\To}{\longrightarrow}
\begin{document}
\setcounter{tocdepth}{1}


\title[Amalgamation and injectivity in Banach lattices]{Amalgamation and injectivity in Banach lattices}

\author[A. Avil\'es]{Antonio Avil\'es}
\address{Universidad de Murcia, Departamento de Matem\'{a}ticas, Campus de Espinardo 30100 Murcia, Spain.}
\email{avileslo@um.es}

\author[P. Tradacete]{Pedro Tradacete}
\address{Instituto de Ciencias Matem\'aticas (CSIC-UAM-UC3M-UCM)\\
Consejo Superior de Investigaciones Cient\'ificas\\
C/ Nicol\'as Cabrera, 13--15, Campus de Cantoblanco UAM\\
28049 Madrid, Spain.}
\email{pedro.tradacete@icmat.es}

\begin{abstract}
We study distinguished objects in the category $\mathcal{BL}$ of Banach lattices and lattice homomorphisms. The free Banach lattice construction introduced by de Pagter and  Wickstead generates push-outs, and combining this with an old result of Kellerer on marginal measures, the amalgamation property of Banach lattices is established. This will be the key tool to prove that $L_1([0,1]^{\mathfrak{c}})$ is separably $\mathcal{BL}$-injective, as well as to give more abstract examples of Banach lattices of universal disposition for separable sublattices. Finally, an analysis of the ideals on $C(\Delta,L_1)$, which is a separably universal Banach lattice as shown by  Leung, Li, Oikhberg and Tursi, allows us to conclude that separably $\mathcal{BL}$-injective Banach lattices are necessarily non-separable.
\end{abstract}

\subjclass[2010]{46B42, 46M10, 28A35}

\keywords{Banach lattice; lattice homomorphism; amalgamation property; separably injective}

\maketitle

\tableofcontents

\section{Introduction and motivation}

Let $\mathcal{BL}$ denote the category of Banach lattices with Banach lattice homomorphisms, bounded linear operators that commute with any of the lattice operations $\wedge$, $\vee$, $|\cdot|$. The field of scalars will always be the real line. Following a general categorical language, a Banach lattice $Z$ is (separably) $\mathcal{BL}$-injective if whenever we have $X\subset Y$ (separable) Banach lattices, every homomorphism $g:X\To Z$ can be extended to $\hat{g}:Y\To Z$.
$$
\xymatrix{Y\ar[drr]^{\hat g}&&\\
	X\ar[rr]_{g}\ar@{^{(}->}[u]&& Z}
$$
In a fundamental paper, concerning several distinguished objects in $\mathcal{BL}$, such as free and projective objects, B. de Pagter and A. W. Wickstead noted that there are no $\mathcal{BL}$-injectives \cite[footnote 1]{dePW}. The reason for the non-existence of such objects is based on an unbounded cardinality argument (cf. Theorem~\ref{dPWnoinjective} and its later comments), which leaves open the question whether $\mathcal{BL}$-injective Banach lattices for some bounded cardinality can exist, and in particular whether there exist separably $\mathcal{BL}$-injective ones. 

When trying to imitate some of the known constructions of separably injective Banach spaces \cite{sepiny}, we faced an apparently na\"{i}f question: Do Banach lattices enjoy the amalgamation property? That is, given two lattice isometric embeddings $u_1:X_0\To X_1$ and $u_2:X_0\To X_2$, do there exist a Banach lattice $X_3$ and lattice isometric embeddings $v_1:X_1\To X_3$ and $v_2:X_2\To X_3$ such that $v_1\circ u_1 = v_2\circ u_2$?
\begin{center}
	\begin{tikzcd}X_1\arrow[r, "v_1?"]& X_3? \\X_0\arrow[r, "u_2"]\arrow[u,"u_1"]& X_2\arrow[u, "v_2?"]\end{tikzcd}
\end{center}

By lattice isometric embeddings we mean Banach lattice homomorphisms that preserve the norm. This question is actually interesting in its own given its connection with Fra\"iss\'e theory (see \cite{FLMT, Lupini}). For Banach spaces, this is quite a straightforward construction. However, the proof of the amalgamation property of Banach lattices that we present here is far more complicated, and it relies on three main ingredients: First, the existence of push-outs, that is based on the idea of free Banach lattices \cite{dePW, ART}; Second, a reduction to the case of lattices of the form $L_1(\mu)$; Third, an old result of Kellerer~\cite{kellerer} on marginal measures, that gives an amalgamation result for measure spaces, Theorem~\ref{measureamalgamation}. Kellerer's paper is available only in German, so we decided to reproduce  here the most relevant proofs in Section~\ref{s:kellerer}. The reduction to $L_1$ lattices requires the fact that there are plenty of lattice homomorphism into $L_1$ lattices induced by positive elements of the dual, explained in Section~\ref{s:homoL1}. This plays a role analogous to that of dual elements in Banach space theory, that we cannot use here so proficiently as there are even Banach lattices, like nonatomic $L_1(\mu)$, that lack any nonzero homomorphism into the scalar field. The bridge connecting the categories of measure spaces and $L_1$ Banach lattices are Iwanik's theorems \cite{Iwanik}. The main result attained at the end of Section~\ref{s:amalgamation}, Theorem~\ref{parallelisometries}, is an improved version of amalgamation, where only one arrow is assumed isometric and the property is transferred to its parallel arrow.

We would like to note that the amalgamation property of Banach lattices has been independently proven, using completely different techniques, by M. A. Tursi in \cite{Tursi}, where it is used to construct a Banach lattice analogue of Gurarii's space. Tursi's work appeared soon after a first version of this manuscript had been made public.

Once amalgamation is at hand, we can iterate it transfinitely to create Banach lattices of universal disposition for separable lattices, that in turn are examples of 1-separably $\mathcal{BL}$-injective Banach lattices. Universal disposition has a similar definition as separable injectivity, but $g$ and $\hat{g}$ are isometric lattice embeddings. The number 1, on the other hand, means that we can make $\|\hat{g}\| = \|g\|$. A peculiar phenomenon here is that we can perform these constructions within the world of $L_1$ lattices only, and this will yield that $L_1([0,1]^\kappa)$ is 1-separably injective for uncountable $\kappa$, cf.~Theorem~\ref{L1typekappa}. Further properties of this kind of Banach lattices are investigated in Section~\ref{s:properties} and in particular we show that every separably injective Banach lattice contains $L_1[0,1]$ and every 1-separably injective Banach lattice contains $L_1([0,1]^{\aleph_1})$.

Finally, Section~\ref{s:sepinyBLnonsep} is devoted to the proof that all separably $\mathcal{BL}$-injective Banach lattices must be nonseparable. This may be compared to Sobczyk and Zippin theorems, that state that $c_0$ is a separable separably injective Banach space, and it is in fact the only one, up to isomorphism \cite{Zippin}. Here, it is not $c_0$ but $L_1[0,1]$ that could look like a reasonable candidate, and the first step will be to remove it from the list of suspects. The second step will be a careful analysis of the ideals in the Banach lattice of $L_1[0,1]$-valued continuous functions on the Cantor set, $C(\Delta,L_1[0,1])$, which, as proved by Leung, Li, Oikhberg and Tursi \cite{LLOT}, contains lattice isometric copies of all separable Banach lattices. This final section is essentially self-contained. Along the rest of the paper, however, results from previous sections are often essential.

It should be noted that in the Banach lattice literature one can find earlier work on extensions of lattice homomorphisms and related notions of injectivity. On the one hand, the problem of extending lattice homomorphisms from a certain class of sublattices (namely, majorizing sublattices) has been studied in \cite{Bernau, Lip, Lux-Schep}. This is however very limited for the scope we are analyzing here, as sublattices are far from being majorizing in general. On the other hand, injective objects in the category $\mathcal{BL}_+$ of Banach lattices and \emph{positive operators} have been thoroughly considered in the literature \cite{Cartwright, Haydon, Kusraev, LT:81, Lotz, Mangheni}, and are usually referred to as \emph{injective Banach lattices}. This is why, in order to avoid confusion, we insist on writing $\mathcal{BL}$ before the word injective all along this paper. The categories $\mathcal{BL}_+$ and $\mathcal{BL}$ differ essentially in many aspects, and in particular their respective classes of injective (or partially injective) objects are very different. 

For unexplained terms concerning Banach lattices we refer the reader to the monographs \cite{LT2, M-N}.

\section{Preliminaries: Kellerer's Marginal Problem}\label{s:kellerer}

We fix a finite set $I$, and for each $i\in I$ we fix a set $\Omega_i$ and a $\sigma$-algebra $\Sigma_i$ of subsets of $\Omega_i$. For every subset $T\subset I$ we consider the cartesian product $\Omega_T= \prod_{i\in T}\Omega_i$, that we endow with the product $\sigma$-algebra $\Sigma_T = \bigotimes_{i\in T}\Sigma_i$. In the degenerate case, when $T=\emptyset$, we will set $\Omega_\emptyset$ to be a singleton, and $\Sigma_\emptyset$ its unique $\sigma$-algebra consisting of the total and the empty sets.

\begin{definition}
	Given a measure $\nu$ on $(\Omega_I,\Sigma_I)$ and a subset $T\subset I$, the marginal measure $\nu^T$ is the measure on $(\Omega_T,\Sigma_T)$ given by
	$$
	\nu^T(A) = \nu(A \times \Omega_{I\setminus T}).
	$$
\end{definition}

In the above formula, we make a natural identification $\Omega_I = \Omega_T \times \Omega_{I\setminus T}$. The \emph{Marginal Problem}, as considered by Kellerer~\cite{kellerer}, is the following:\\

\textbf{Marginal Problem:}	 If we are given a family of sets $\mathfrak{T}\subset \mathcal{P}(I)$, and for each $T\in\mathfrak{T}$ we are given measures $\nu_T $ on $(\Omega_T,\Sigma_T)$, under what conditions can we find a measure $\nu$ on $(\Omega_I,\Sigma_I)$ such that $\nu^T = \nu_T$ for all $T\in\mathfrak{T}$?\\

%


Kellerer provided solutions to two variations of this question: the unrestricted marginal problem, for general signed measures, and the restricted marginal problem, when bounds on the desired measure, like being positive, are required. Since this paper is written in German and it may not be that easy for some readers (including ourselves) to navigate through it, we will reproduce the main proofs.

\begin{theorem}[Unrestricted marginal problem, \cite{kellerer} Satz 2.2]\label{unrestricted}
Given $\mathfrak{T}\subset \mathcal{P}(I)$, and a family of measures $(\nu_T)_{T\in \mathfrak{T}}$, the following are equivalent:
	\begin{enumerate}
		\item There exists a finite signed measure $\nu$ on $(\Omega_I,\Sigma_I)$ such that $\nu^T = \nu_T$ for all $T\in\mathfrak{T}$.
		\item $\forall T_1,T_2 \in\mathfrak{T}$ we have
		$$
		\nu_{T_1}^{T_1\cap T_2} = \nu_{T_2}^{T_1\cap T_2}.
		$$
	\end{enumerate}
\end{theorem}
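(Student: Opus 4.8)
The plan is to prove the two implications separately, the first being routine and the second carrying the real content. For both I would first isolate the elementary \emph{transitivity of marginals}: whenever $S\subset T\subset I$, one has $(\nu^T)^S=\nu^S$ for every finite signed measure $\nu$ on $\Omega_I$, which is immediate from the definition together with the identification $\Omega_I=\Omega_S\times\Omega_{T\setminus S}\times\Omega_{I\setminus T}$. Granting this, the implication $(1)\Rightarrow(2)$ is a one-line computation: if $\nu^T=\nu_T$ for all $T\in\mathfrak{T}$, then for any $T_1,T_2\in\mathfrak{T}$ we get $\nu_{T_1}^{T_1\cap T_2}=(\nu^{T_1})^{T_1\cap T_2}=\nu^{T_1\cap T_2}=(\nu^{T_2})^{T_1\cap T_2}=\nu_{T_2}^{T_1\cap T_2}$, which is exactly $(2)$.

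The substance is the converse, and I would argue by induction on $|\mathfrak{T}|$, which is finite since $\mathfrak{T}\subset\mathcal{P}(I)$ with $I$ finite. When $|\mathfrak{T}|=1$, say $\mathfrak{T}=\{T_0\}$, I would simply set $\nu=\nu_{T_0}\otimes\gamma$ for any probability measure $\gamma$ (e.g.\ a Dirac mass) on $\Omega_{I\setminus T_0}$; since marginals of product measures are products of marginals, $\nu^{T_0}=\nu_{T_0}\cdot\gamma(\Omega_{I\setminus T_0})=\nu_{T_0}$. For the inductive step, pick any $T_0\in\mathfrak{T}$, put $\mathfrak{T}'=\mathfrak{T}\setminus\{T_0\}$, and observe that condition $(2)$ is inherited by $\mathfrak{T}'$; the induction hypothesis then yields a finite signed measure $\mu$ on $\Omega_I$ with $\mu^T=\nu_T$ for every $T\in\mathfrak{T}'$. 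It remains to repair the $T_0$-marginal of $\mu$ without disturbing the others.

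The crux is the choice of a correction term. Consider the discrepancy $d=\nu_{T_0}-\mu^{T_0}$, a finite signed measure on $\Omega_{T_0}$. The key observation is that $(2)$ forces $d^{T_0\cap T}=0$ for every $T\in\mathfrak{T}'$: indeed, by transitivity $\mu^{T_0\cap T}=(\mu^T)^{T_0\cap T}=\nu_T^{T_0\cap T}$, so $d^{T_0\cap T}=\nu_{T_0}^{T_0\cap T}-\nu_T^{T_0\cap T}$, which vanishes by $(2)$ applied to the pair $T_0,T$. I would then spread $d$ over $\Omega_I=\Omega_{T_0}\times\Omega_{I\setminus T_0}$ as $\tilde d=d\otimes\gamma$ for a probability measure $\gamma$ on $\Omega_{I\setminus T_0}$, and set $\nu=\mu+\tilde d$. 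This gives $\nu^{T_0}=\mu^{T_0}+d=\nu_{T_0}$, while for $T\in\mathfrak{T}'$ the product rule for marginals yields $\tilde d^{T}=d^{T\cap T_0}\otimes\gamma^{T\setminus T_0}=0$ because $d^{T\cap T_0}=0$, hence $\nu^T=\mu^T=\nu_T$. This closes the induction and produces the required finite signed measure.

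The only genuinely delicate point, which I would record as an auxiliary lemma, is the behaviour of marginals under products: for $d\otimes\gamma$ on $\Omega_{T_0}\times\Omega_{I\setminus T_0}$ and any $T\subset I$, splitting $T=(T\cap T_0)\sqcup(T\setminus T_0)$ and using $\Sigma_T=\Sigma_{T\cap T_0}\otimes\Sigma_{T\setminus T_0}$ gives $(d\otimes\gamma)^T=d^{T\cap T_0}\otimes\gamma^{T\setminus T_0}$. Everything else is bookkeeping: $\nu$ is finite because $\mu$ is finite and $d\otimes\gamma$ is a product of finite measures, and \emph{no positivity is used anywhere}. I do not expect a real obstacle in this unrestricted setting; the whole difficulty of the marginal problem is concentrated in the \emph{restricted} version, where the correction term $d\otimes\gamma$ would in general destroy positivity and a substantially subtler argument is required.
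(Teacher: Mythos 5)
Your proof is correct, but it takes a genuinely different route from the paper's. The paper proves $(2)\Rightarrow(1)$ in one shot via an explicit inclusion--exclusion formula: fixing auxiliary probability measures $\mu_i$ on each $\Omega_i$ with product $\mu$, it sets $\nu=\sum_{\emptyset\neq\mathfrak{S}\subset\mathfrak{T}}(-1)^{|\mathfrak{S}|-1}\,\nu_{\bigcap\mathfrak{S}}\otimes\mu^{I\setminus\bigcap\mathfrak{S}}$ and verifies $\nu^T=\nu_T$ using the product-marginal rule together with a combinatorial cancellation (the summands indexed by $\mathfrak{S}$ and $\mathfrak{S}\cup\{T\}$ cancel in pairs, leaving only $\mathfrak{S}=\{T\}$). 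Your induction on $|\mathfrak{T}|$ with the correction term $d\otimes\gamma$, where $d=\nu_{T_0}-\mu^{T_0}$, replaces that closed formula by an iterative repair step; both arguments rest on exactly the same two elementary lemmas (marginals of products are products of marginals, proved on rectangles plus Dynkin's $\pi$-$\lambda$ theorem, and transitivity of marginals), and neither uses positivity, as you correctly stress. What the paper's approach buys is an explicit expression for the solution $\nu$; what yours buys is that hypothesis $(2)$ is only ever invoked for pairs, literally as stated. Indeed, the paper's formula involves the symbols $\nu_{\bigcap\mathfrak{S}}$ and $\nu_{T\cap\bigcap\mathfrak{S}}$ for index sets that need not belong to $\mathfrak{T}$, so these are not among the given data; making them well defined requires precisely the observation you isolate (that by $(2)$ and transitivity every $T'\in\mathfrak{S}$ induces the same marginal on $\bigcap\mathfrak{S}$), a point the paper leaves implicit. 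Your closing remark also matches the paper: the real difficulty is concentrated in the restricted problem (Theorem~\ref{restricted}), where a correction of the form $d\otimes\gamma$ would destroy the constraint $\underline{\nu}\leq\nu\leq\overline{\nu}$ and Kellerer instead needs a duality/compactness argument.
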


\begin{proof}
	The implication $(1)\Rightarrow (2)$ is obvious. For the converse, we fix arbitrary probability measures $\mu_i$ in $(\Omega_i,\Sigma_i)$ for every $i\in I$, and  $\mu = \bigotimes_{i\in I}\mu_i$ their product measure. Notice that the partial products can be viewed as marginals of $\mu$:  $\mu^T= \bigotimes_{i\in T}\mu_i$ for $T\subset I$. Define
	$$
	{\nu} := \sum_{\emptyset\neq\mathfrak{S}\subset\mathfrak{T}}(-1)^{|\mathfrak{S}|-1}\cdot \nu_{\bigcap\mathfrak{S}} \otimes \mu^{I\setminus\bigcap\mathfrak{S}},
	$$
where $|\mathfrak{S}|$ denotes the cardinality of the family $\mathfrak{S}$, and $\bigcap\mathfrak{S}=\bigcap_{T\in \mathfrak{S}} T$.

Let us check that this works. The first observation is that the marginal of a product measure is the product of the corresponding marginal measures, in the sense that if $J,T\subset I$ and $\tilde{\nu}$ and $\tilde{\mu}$ are measures on $\Sigma_J$ and $\Sigma_{I\setminus J}$ respectively, then $(\tilde{\nu}\otimes\tilde{\mu})^T = \tilde{\nu}^{T\cap J} \otimes \tilde{\mu}^{T\setminus J}$. These two measures clearly coincide on sets of the form $B=\bigotimes_{i\in I} B_i$, so by Dynkin's $\pi$-$\lambda$ theorem, cf. \cite[Theorem 1.6.2]{Cohn}, they are equal. Applying this principle, we get

\begin{eqnarray*}
	{\nu}^T &=& \sum_{\emptyset\neq\mathfrak{S}\subset\mathfrak{T}}(-1)^{|\mathfrak{S}|-1}\cdot \nu_{T\cap \bigcap\mathfrak{S}} \otimes \mu^{T\setminus\bigcap\mathfrak{S}}.\\
\end{eqnarray*}
The summand that corresponds to $\mathfrak{S}= \{T\}$ gives $\nu_T$. All other summands can be groupped into pairs $\mathfrak{S}$ and $\mathfrak{S}\cup\{T\}$ that have the same value with different sign, so they cancel. We get that $\nu^T = \nu_T$ as desired.
\end{proof}

Let $B(\Sigma_T)$ be the set of all bounded measurable functions $g:(\Omega_T,\Sigma_T)\To \mathbb{R}$. To each $g\in B(\Sigma_T)$ we can naturally associate $\tilde{g}\in B(\Sigma_I)$ by taking the composition with the natural projection $\pi_T:\Omega_I\To \Omega_T$ onto the coordinates in $T$. Note that $\int_{\Omega_T} g\ d\nu^T = \int_{\Omega_I}\tilde{g}\ d\nu$ holds for every $g\in B(\Sigma_T)$.

\begin{theorem}[Restricted marginal problem, \cite{kellerer} Sätze 4.2 and 4.3]\label{restricted}
	Fix two signed finite measures $\underline{\nu}$ and $\overline{\nu}$ on $(\Omega_I,\Sigma_I)$. Given $\mathfrak{T}\subset \mathcal{P}(I)$, and a family of measures $(\nu_T)_{T\in \mathfrak{T}}$, the following are equivalent:
	\begin{enumerate}
	\item[$(\mathbf{N})$] There exists a finite signed measure $\nu$ on $(\Omega_I,\Sigma_I)$ such that $\underline{\nu}\leq \nu \leq \overline{\nu}$ and $\nu^T = \nu_T$ for all $T\in\mathfrak{T}$.
	\item[$(\mathbf{N^\ast})$] Whenever we pick $g_T\in B(\Sigma_T)$ for every $T\in \mathfrak{T}$, the following holds:
	$$
	\sum_{T\in\mathfrak{T}}\int_{\Omega_T}g_T\ d\nu_T \leq \int_{\Omega_I}\left(\sum_{T\in\mathfrak{T}}\tilde{g}_T\right)^+ d\overline{\nu} - \int_{\Omega_I}\left(\sum_{T\in\mathfrak{T}}	\tilde{g}_T\right)^- d\underline{\nu}.
	$$
	\end{enumerate}
\end{theorem}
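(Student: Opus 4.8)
The direction $(\mathbf{N}) \Rightarrow (\mathbf{N^\ast})$ is routine and I would dispatch it first. Given a witnessing $\nu$ and an arbitrary choice $(g_T)_{T \in \mathfrak{T}}$, I set $h = \sum_{T}\tilde g_T$; using $\nu^T = \nu_T$ together with the identity $\int_{\Omega_T} g_T\, d\nu^T = \int_{\Omega_I}\tilde g_T\, d\nu$ recorded just before the statement, the left-hand side of $(\mathbf{N^\ast})$ becomes $\int_{\Omega_I} h\, d\nu = \int h^+\, d\nu - \int h^-\, d\nu$. Since $\overline\nu - \nu$ and $\nu - \underline\nu$ are positive measures and $h^+, h^- \geq 0$, I bound $\int h^+\, d\nu \leq \int h^+\, d\overline\nu$ and $\int h^-\, d\nu \geq \int h^-\, d\underline\nu$, which combine to give exactly the asserted inequality.

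For the converse my plan is a Hahn--Banach dominated-extension argument on the space $B(\Sigma_I)$ of bounded $\Sigma_I$-measurable functions. I define $p\colon B(\Sigma_I)\to\mathbb{R}$ by $p(f)=\int_{\Omega_I} f^+\, d\overline\nu - \int_{\Omega_I} f^-\, d\underline\nu$, and write $\rho=\overline\nu-\underline\nu\geq 0$. A one-line computation gives $p(f)=\int f\, d\underline\nu + \int f^+\, d\rho$, exhibiting $p$ as the sum of a linear functional and the sublinear functional $f\mapsto\int f^+\, d\rho$; hence $p$ is sublinear. I let $W\subset B(\Sigma_I)$ be the linear subspace of all $h=\sum_{T\in\mathfrak{T}}\tilde g_T$ with $g_T\in B(\Sigma_T)$, and define $\phi(h)=\sum_T\int_{\Omega_T} g_T\, d\nu_T$. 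The crucial point is that $(\mathbf{N^\ast})$ makes $\phi$ well defined: if $\sum_T\tilde f_T=0$, then applying $(\mathbf{N^\ast})$ to $(f_T)$ and to $(-f_T)$ forces $\sum_T\int f_T\, d\nu_T=0$. Moreover $(\mathbf{N^\ast})$ reads precisely as $\phi(h)\leq p(h)$ on $W$, so Hahn--Banach yields a linear extension $\Phi$ to all of $B(\Sigma_I)$ with $\Phi\leq p$ everywhere.

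It then remains to turn $\Phi$ into the desired measure. I set $\nu(A)=\Phi(\mathbf{1}_A)$ for $A\in\Sigma_I$; evaluating $\Phi\leq p$ on $f=\mathbf{1}_A$ and $f=-\mathbf{1}_A$ gives $\underline\nu(A)\leq\nu(A)\leq\overline\nu(A)$, while finite additivity of $\nu$ is immediate from linearity of $\Phi$. Because $\Phi\leq p$ also yields $|\Phi(f)|\leq C\|f\|_\infty$ with $C=|\overline\nu|(\Omega_I)+|\underline\nu|(\Omega_I)$, the functional $\Phi$ is $\|\cdot\|_\infty$-continuous and therefore agrees with $f\mapsto\int f\, d\nu$ first on simple functions and then, by uniform approximation, on all of $B(\Sigma_I)$. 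Finally, for $T\in\mathfrak{T}$ and $g\in B(\Sigma_T)$ we have $\tilde g\in W$, whence $\int_{\Omega_T} g\, d\nu^T = \int_{\Omega_I}\tilde g\, d\nu = \Phi(\tilde g)=\phi(\tilde g)=\int_{\Omega_T} g\, d\nu_T$; taking $g=\mathbf{1}_A$ shows $\nu^T=\nu_T$.

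The main obstacle I anticipate is not the extension itself but verifying that the finitely additive set function $\nu$ produced by Hahn--Banach is genuinely countably additive, as a finite signed measure must be. Here I would exploit the sandwich $\underline\nu\leq\nu\leq\overline\nu$: if $A_n\downarrow\emptyset$ in $\Sigma_I$, then continuity from above of the honest signed measures $\underline\nu$ and $\overline\nu$ gives $\underline\nu(A_n)\to 0$ and $\overline\nu(A_n)\to 0$, and the squeeze forces $\nu(A_n)\to 0$. Continuity at $\emptyset$, together with finite additivity and the boundedness already established, upgrades $\nu$ to a countably additive finite signed measure, completing the argument.
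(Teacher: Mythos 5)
Your proof is correct, but it takes a genuinely different route from the paper's. The paper argues in two stages: it first settles the case of finite $\sigma$-algebras by finite-dimensional convex geometry (the set $Y$ of solvable tuples is a finitely generated cone, hence by Weyl's theorem an intersection of finitely many closed half-spaces, whose defining functionals are then pinned down by the paper's Claims A and B), and then passes to general $\sigma$-algebras by solving the problem on every finite product sub-$\sigma$-algebra $F$, encoding the solutions as Radon--Nikod\'ym densities $f_F$ in the order interval $\{f: 0\leq f\leq 1\}$ of $L_1(\overline{\nu}-\underline{\nu})$, extracting a weak cluster point, and identifying the marginals via Dynkin's $\pi$-$\lambda$ theorem. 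Your single Hahn--Banach argument replaces both stages at once: the functional $\phi$ on the subspace $W$ of sums $\sum_{T}\tilde{g}_T$, dominated by the sublinear functional $p$, extends to all of $B(\Sigma_I)$, the sandwich $\underline{\nu}\leq\nu\leq\overline{\nu}$ comes for free from the domination, and it is precisely this sandwich that upgrades the finitely additive set function to a countably additive measure. This is shorter, avoids Weyl's theorem, Radon--Nikod\'ym and weak compactness entirely, and notably sidesteps the obstruction the paper itself records (that its polyhedral argument does not extend directly to the infinite case because weak$^\ast$ closedness of $Y$ is unclear): duality via Hahn--Banach needs no such closedness. Your key verifications --- well-definedness of $\phi$ by applying $(\mathbf{N^\ast})$ to $(f_T)$ and $(-f_T)$, the squeeze argument for countable additivity, the sup-norm continuity identifying $\Phi$ with integration against $\nu$, and the marginal identities --- are all sound.

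One caveat, which you share with the paper's own proof: sublinearity of $p$ requires $\rho=\overline{\nu}-\underline{\nu}\geq 0$, which you assert but which does not follow from $(\mathbf{N^\ast})$ in degenerate cases (for instance $\mathfrak{T}=\{\emptyset\}$, where $(\mathbf{N^\ast})$ only constrains total masses, so one can have $(\mathbf{N^\ast})$ hold while $\underline{\nu}\not\leq\overline{\nu}$ and $(\mathbf{N})$ fails). The paper makes the very same implicit assumption: it integrates the pointwise bound of its Claim B against $\overline{\nu}-\underline{\nu}$, and later declares $\mu=\overline{\nu}-\underline{\nu}$ to be nonnegative. Since $\underline{\nu}\leq\overline{\nu}$ is necessary for $(\mathbf{N})$ in any case, both proofs should be read as assuming it as a standing hypothesis; granting that, your argument is complete.
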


\begin{proof}
%
%
	
It is clear that condition $(\mathbf{N^\ast})$ follows from $(\mathbf{N})$, so we will focus on the converse implication. The first step is to prove the theorem in the case when all $\sigma$-algebras $\Sigma_i$ are finite. This is performed in \cite[Section 3]{kellerer}. In that case, for every $T\subset I$, the space $ba(\Sigma_T)$ of all finite signed measures on $(\Omega_T,\Sigma_T)$ is a finite-dimensional vector space. Its dual space $ba(\Sigma_T)^\ast$ can be identified with $B(\Sigma_T)$, the action being given by integration. In turn, any such function $f\in B(\Sigma_T)$ is determined by its values on the atoms of $\Sigma_T$. Consider the set
$$
Y = \left\{(\underline{\nu},\overline{\nu}, \nu_T : T\in\mathfrak{T}) \in  ba(\Sigma_I)\times ba(\Sigma_I)\times \prod_{T\in\mathfrak{T}}ba(\Sigma_T) : (\mathbf{N})\text{ holds} \right\}
$$
For every atom $a=\prod_{i\in I}a_i$ of $\Sigma_I$, consider the measure $\delta_a\in ba(\Sigma_I)$ concentrated on $a$ with $\delta_a(a) =1$, and the tuples
	\begin{eqnarray*}
	y^0_a &:=& (\delta_a,\delta_a,\delta_a^T : T\in\mathfrak{T})\in Y,\\
	y^1_a &:=& (-\delta_a, 0 , 0 : T\in\mathfrak{T})\in Y,\\
	y^2_a &:=& (0,+\delta_a,0 : T\in\mathfrak{T}) \in Y.
	\end{eqnarray*}
We claim that $Y$ is precisely the cone of all linear combinations, with nonnegative scalars, of vectors of the form $\pm y^0_a, y^1_a, y^2_a$. On the one hand, all these elements clearly belong to $Y$, and $Y$ is easily seen to be closed under nonnegative linear combinations. On the other hand, every vector in $Y$ can be written as
$$
(\underline{\nu},\overline{\nu}, \nu_T : T\in\mathfrak{T}) = (\nu,\nu,\nu^T: T\in \mathfrak{T}) + (\underline{\nu}-\nu,0,0 : T\in\mathfrak{T}) + (0,\overline{\nu}-\nu,0 : T\in\mathfrak{T}).
$$
Since our $\sigma$-algebras are finite, the first summand is a nonnegative linear combination, of $\pm y^0_a$'s, the second summand of $y^1_a$'s and the third summand of $y^2_a$'s. By Weyl's theorem, cf.~\cite{Weyl} or \cite[Theorem 4]{GT}, since $Y$ can be written as the set of nonnegative linear combinations of a finite set of vectors in a finite-dimensional space, $Y$ can also be written as the intersection of finitely many closed linear half-spaces. That means, that there exists a finite set $S\subset  \Big(ba(\Sigma_I)\times ba(\Sigma_I)\times \prod_{T\in\mathfrak{T}}ba(\Sigma_T)\Big)^*$ such that
$$
(\underline{\nu},\overline{\nu}, \nu_T : T\in\mathfrak{T}) \in Y \iff \forall\ s\in S\  \langle  s,(\underline{\nu},\overline{\nu}, \nu_T : T\in\mathfrak{T}) \rangle \leq 0.
$$
For a matter of convenience, let us write an element $s\in S$ as
$$
s=(\underline{s},-\overline{s},s_T : T\in\mathfrak{T})\in B(\Sigma_I)\times B(\Sigma_I)\times \prod_{T\in\mathfrak{T}}B(\Sigma_T),
$$
so that we can write the above condition as
\begin{align}\label{star}
(\underline{\nu},\overline{\nu}, \nu_T : T\in\mathfrak{T}) \in Y \iff  \forall s\in S \   \sum_{T\in\mathfrak{T}}\int_{\Omega_T} s_T\ d\nu_T \leq \int_{\Omega_I} \overline{s}\ d\overline{\nu} - \int_{\Omega_I} \underline{s}\ d\underline{\nu}. \tag{$\star$}
\end{align}

\textbf{Claim A.} If $s\in S$, then
$$\sum_{T\in\mathfrak{T}}\tilde{s}_T = \overline{s}-\underline{s}.$$
\textit{Proof of Claim A}: For every $\nu\in ba(\Sigma_I)$ we have $(\nu,\nu,\nu^T : T\in \mathfrak{T})\in Y$ and $(-\nu,-\nu,-\nu^T : T\in \mathfrak{T})\in Y$. Applying \eqref{star} we get two reverse inequalities, that together give
$$ \sum_{T\in\mathfrak{T}}\int_{\Omega_T} s_T\ d\nu^T = \int_{\Omega_I} \overline{s}\ d{\nu} - \int_{\Omega_I} \underline{s}\ d{\nu}. $$
The first term can be rewritten, using the formula $\int_{\Omega_T} g\ d\nu^T = \int_{\Omega_I}\tilde{g}\ d\nu$, as
$$ \int_{\Omega_I}\left(\sum_{T\in\mathfrak{T}} \tilde{s}_T\right)\ d\nu = \int_{\Omega_I} \overline{s}\ d{\nu} - \int_{\Omega_I} \underline{s}\ d{\nu}. $$
Since this holds for every $\nu\in ba(\Sigma_I)$, Claim A follows.

\textbf{Claim B.} If $s\in S$ then
$$\left|\sum_{T\in\mathfrak{T}}\tilde{s}_T\right| \leq \overline{s} + \underline{s}.$$
\textit{Proof of Claim B}: For every nonnegative $\nu\in ba(\Sigma_I)$ we have $(-\nu,\nu,\nu^T : T\in \mathfrak{T})\in Y$ and $(-\nu,\nu,-\nu^T : T\in \mathfrak{T})\in Y$. Applying \eqref{star} we get two reverse inequalities, that together give
$$ \left|\sum_{T\in\mathfrak{T}}\int_{\Omega_T} s_T\ d\nu^T\right| \leq \int_{\Omega_I} \overline{s}\ d{\nu} +\int_{\Omega_I} \underline{s}\ d{\nu}. $$
Similarly as before, we can rewrite this as
$$ \left|\int_{\Omega_I}\left(\sum_{T\in\mathfrak{T}} \tilde{s}_T\right)\ d\nu\right| \leq \int_{\Omega_I} \left(\overline{s} +  \underline{s}\right)\ d{\nu}. $$
Since this holds for every nonnegative $\nu\in ba(\Sigma_I)$, Claim B follows.

Now, let us suppose that $ y=(\underline{\nu},\overline{\nu}, \nu_T : T\in\mathfrak{T})$ satisfies $(\mathbf{N^*})$ and we will show that $y\in Y$. For this, we pick $s\in S$ and we check the required inequality \eqref{star} using claims A and B:
\begin{eqnarray*}
\sum_{T\in\mathfrak{T}}\int_{\Omega_T} s_T\ d\nu_T &\leq& \int_{\Omega_I}\left(\sum_{T\in\mathfrak{T}}\tilde{s}_T\right)^+ d\overline{\nu} - \int_{\Omega_I}\left(\sum_{T\in\mathfrak{T}}\tilde{s}_T\right)^- d\underline{\nu} \\
&=& \frac{1}{2}\int_{\Omega_I}\left(\sum_{T\in\mathfrak{T}}\tilde{s}_T\right) d(\overline{\nu}+\underline{\nu}) + \frac{1}{2} \int_{\Omega_I}\left|\sum_{T\in\mathfrak{T}}\tilde{s}_T\right| d(\overline{\nu}-\underline{\nu})\\
&\leq& \frac{1}{2}\int_{\Omega_I}\left(\overline{s}-\underline{s}\right) d(\overline{\nu}+\underline{\nu}) + \frac{1}{2} \int_{\Omega_I}\left(\overline{s}+\underline{s}\right) d(\overline{\nu}-\underline{\nu})\\
&=& \int \overline{s}\ d\overline{\nu} - \int \underline{s}\ d\underline{\nu}.	
\end{eqnarray*}
This finishes the cases when the $\sigma$-algebras are finite. As a remark, it is natural to wonder whether the previous argument can be adjusted to work for the infinite case as well. The key point is that we would need the set $Y$ to be closed in the weak$^*$ topology in order to be written as an intersection of suitable closed half spaces. We do not see how to prove that.

Now we consider the general case. The measures $\underline{\nu}$, $\overline{\nu}$ and $\{\nu_T: T\in\mathfrak{T}\}$ are again fixed. By the already proven finite case, for every finite $\sigma$-algebra $F\subset \Sigma_I$ of the form $F=\bigotimes_{i\in I} F_i$ we can find a measure $\nu[F]:F\To \mathbb{R}$ such that $\nu[F]^T = \nu_T|_F$ for every $T\in\mathfrak{T}$, and $\underline{\nu}|_F \leq \nu[F] \leq \overline{\nu}|_F$. Consider the nonnegative measure $\mu=\overline{\nu}-\underline{\nu}$ and write the latter inequality as
$$0 \leq \nu[F]-\underline{\nu}|_F \leq \mu.$$
By the Radon-Nikod\'{y}m theorem, we can find $f_F\in L_1(\Omega_I,F,\mu|_F)\subset L_1(\Omega_I,\Sigma_I,\mu)$ such that $0\leq f_F \leq 1$ and $\nu[F](A) - \underline{\nu}(A) = \int_A f_F\ d\mu$ for all $A\in F$. The set $\mathfrak{F}$ of all finite $\sigma$-algebras $F=\bigotimes_{i\in I} F_i\subset \Sigma_I$ can be viewed as a directed set ordered by inclusion, and $\{f_F : F\in\mathfrak{F}\}$ is a net in the set $\{ f\in L_1(\Omega_I,\Sigma_I,\mu) : 0\leq f \leq 1\}$. The latter set, being an order interval, is a weakly compact set in $L_1(\Omega_I,\Sigma_I,\mu)$. Let $f$ be a cluster point of that net in the weak topology. We claim that the formula $\nu(A) = \int_A f\ d\mu + \underline{\nu}(A)$ gives the measure $\nu$ that we are looking for. Since $0\leq f\leq 1$ we get that $\underline{\nu} \leq \nu\leq \overline{\nu}$. On the other hand, fix a set of the form $B=\prod_{i\in T}B_i \in \Sigma_T$, and $F=\bigotimes_{i\in I} F_i\in\mathfrak{F}$ such that $B_i\in F_i$ for $i\in T$. Then we have the equality
$$
\nu_T(B) = \nu[F]^T(B)= \int_{B\times\Omega_{I\setminus T}} f_F\ d\mu + \underline{\nu}(B\times\Omega_{I\setminus T}).
$$
Since $B_i\in F_i$ for $i\in T$ holds for all $F$ above a given one in the net, we get that
$$\nu_T(B) = \int_{B\times\Omega_{I\setminus T}} f\ d\mu + \underline{\nu}(B\times\Omega_{I\setminus T}) = \nu^T(B). $$
We conclude that the measures $\nu_T$ and $\nu^T$ coincide on all the sets of $\Sigma_T$ of the form $B=\prod_{i\in T}B_i$. By Dynkin's $\pi$-$\lambda$-theorem, $\nu_T= \nu^T$ as desired.
\end{proof}

The fact that we shall actually use is the following corollary about finite positive measures:
\begin{corollary}\label{positivemarginal}
Given $\mathfrak{T}\subset \mathcal{P}(I)$, and a family of finite positive measures $(\nu_T)_{T\in \mathfrak{T}}$, the following are equivalent:
\begin{enumerate}
	\item There exists a finite positive measure $\nu$ on $(\Omega_I,\Sigma_I)$ such that $\nu^T = \nu_T$ for all $T\in\mathfrak{T}$.
	\item For all $T_1,T_2 \in\mathfrak{T}$ we have $$\nu_{T_1}^{T_1\cap T_2} = \nu_{T_2}^{T_1\cap T_2}.$$
\end{enumerate}
\end{corollary}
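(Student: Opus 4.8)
The implication $(1)\Rightarrow(2)$ is immediate and uses only the transitivity of marginals: if a positive $\nu$ on $(\Omega_I,\Sigma_I)$ satisfies $\nu^T=\nu_T$ for all $T$, then for any $T_1,T_2\in\mathfrak{T}$ both $\nu_{T_1}^{T_1\cap T_2}$ and $\nu_{T_2}^{T_1\cap T_2}$ equal the single marginal $\nu^{T_1\cap T_2}$, since $(\nu^{T_1})^{T_1\cap T_2}=\nu^{T_1\cap T_2}$ and likewise with $T_2$. So all the content lies in $(2)\Rightarrow(1)$, and the plan is to obtain it from the restricted marginal problem, Theorem~\ref{restricted}, by encoding ``$\nu\geq 0$'' as a pair of bounds $\underline{\nu}\leq\nu\leq\overline{\nu}$. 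I would take $\underline{\nu}=0$ throughout.

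For the ceiling, fix probability measures $\mu_i$ on each $(\Omega_i,\Sigma_i)$, write $\mu^{S}=\bigotimes_{i\in S}\mu_i$, and set
$$
\overline{\nu}:=\sum_{T\in\mathfrak{T}}\nu_T\otimes\mu^{I\setminus T}.
$$
This is a finite positive measure, and the product--marginal formula $(\tilde{\nu}\otimes\tilde{\mu})^T=\tilde{\nu}^{T\cap J}\otimes\tilde{\mu}^{T\setminus J}$ from the proof of Theorem~\ref{unrestricted} shows $\overline{\nu}^{T}\geq\nu_T$ for every $T\in\mathfrak{T}$: the summand indexed by $T$ contributes exactly $\nu_T\otimes(\mu^{I\setminus T})^{\emptyset}=\nu_T$, and the remaining summands are nonnegative. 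Thus $\overline{\nu}$ is a legitimate candidate ceiling (the inequality $\overline{\nu}^T\geq\nu_T$ being a necessary condition for $(\mathbf{N})$). The degenerate cases $\mathfrak{T}=\emptyset$ and $|\mathfrak{T}|=1$ are handled directly, since $\nu=\overline{\nu}$ (respectively $\nu=\nu_T\otimes\mu^{I\setminus T}$) already realizes $(1)$; for $|\mathfrak{T}|\geq 2$ condition $(2)$ forces a common total mass $\nu_T(\Omega_T)=M$. With $\underline{\nu}=0$ and $\overline{\nu}$ as above, Theorem~\ref{restricted} asserts that a measure $\nu$ with $0\leq\nu\leq\overline{\nu}$ and $\nu^T=\nu_T$ for all $T\in\mathfrak{T}$ exists if and only if $(\mathbf{N^\ast})$ holds; any such $\nu$ is in particular a finite positive realization, which is exactly $(1)$.

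It therefore remains to deduce the functional inequality $(\mathbf{N^\ast})$, which with $\underline{\nu}=0$ reads
$$
\sum_{T\in\mathfrak{T}}\int_{\Omega_T}g_T\,d\nu_T\;\leq\;\int_{\Omega_I}\Big(\sum_{T\in\mathfrak{T}}\tilde{g}_T\Big)^{+}d\overline{\nu}\qquad\text{for all }g_T\in B(\Sigma_T),
$$
from the pairwise-consistency condition $(2)$. The route I would follow is to invoke Theorem~\ref{unrestricted} first: condition $(2)$ produces a (signed) measure $\nu_0$ on $(\Omega_I,\Sigma_I)$ with $\nu_0^T=\nu_T$ for all $T$, so that, using $\int_{\Omega_T}g_T\,d\nu_T=\int_{\Omega_I}\tilde{g}_T\,d\nu_0$, the left-hand side collapses to $\int_{\Omega_I}h\,d\nu_0$ with $h:=\sum_{T}\tilde{g}_T$. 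One is then left to bound $\int_{\Omega_I}h\,d\nu_0$ by $\int_{\Omega_I}h^{+}\,d\overline{\nu}$, and this is the crux: the realization furnished by the unrestricted theorem is merely signed, so controlling $\int h\,d\nu_0$ through the ceiling $\overline{\nu}$ means extracting positivity that the marginal data alone do not display. I expect this passage --- from a marginal-consistent \emph{signed} realization to the one-sided estimate $(\mathbf{N^\ast})$ --- to be the main obstacle, and the point where the combinatorics of $\mathfrak{T}$ (through the mutual intersections $T_1\cap T_2$ entering $(2)$, organised by an inclusion--exclusion decomposition of $h$ over the subfamilies of $\mathfrak{T}$ as in the formula defining $\nu$ in Theorem~\ref{unrestricted}) must be brought to bear.
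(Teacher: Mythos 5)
Your implication $(1)\Rightarrow(2)$ is fine, and your overall framework --- obtain a signed realization $\nu_0$ from Theorem~\ref{unrestricted}, then feed Theorem~\ref{restricted} with $\underline{\nu}=0$ and a suitable ceiling --- is exactly the paper's strategy (the paper's ceiling is $\overline{\nu}=|\nu_0|$, the variation of the signed realization, rather than your $\sum_{T\in\mathfrak{T}}\nu_T\otimes\mu^{I\setminus T}$). But you stop precisely at the decisive step, the deduction of $(\mathbf{N^\ast})$ from the pairwise consistency $(2)$, and you yourself flag it as ``the main obstacle''. That is a genuine gap: what you have is a reduction, not a proof.

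Moreover, the obstacle you ran into is essential: for arbitrary $\mathfrak{T}$ the implication $(2)\Rightarrow(1)$ is \emph{false}, so no choice of ceiling, and no inclusion--exclusion over subfamilies of $\mathfrak{T}$, can make $(\mathbf{N^\ast})$ follow from $(2)$ alone. Take $I=\{1,2,3\}$, $\Omega_i=\{0,1\}$, $\mathfrak{T}$ the three two-element subsets of $I$, with $\nu_{\{1,2\}}$ and $\nu_{\{2,3\}}$ uniform on the diagonal $\{(0,0),(1,1)\}$ and $\nu_{\{1,3\}}$ uniform on the antidiagonal $\{(0,1),(1,0)\}$: all one-dimensional marginals are uniform of mass $1$, so $(2)$ holds, yet any positive $\nu$ would be carried by $\{x_1=x_2=x_3\}$ while assigning full mass to $\{x_1\neq x_3\}$, which is impossible. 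Pairwise consistency suffices only for \emph{decomposable} families (those with the running intersection property), such as $\mathfrak{T}=\{\{0\},\{1\},\{2\},\{0,1\},\{0,2\}\}$, the only case actually used in Theorem~\ref{measureamalgamation}; there one must exploit the structure of $\mathfrak{T}$, gluing two marginals along their common restriction, and not just condition $(2)$. For what it is worth, the paper's own one-line verification (``it is easily checked that $(\mathbf{N^\ast})$ holds'' with $\overline{\nu}=|\nu_0|$) breaks at this very same point: in the cyclic example above $(\mathbf{N^\ast})$ must fail for \emph{every} signed realization, and it can fail even when $(1)$ holds --- with $I=\{1,2\}$, $\mathfrak{T}=\{\{1\},\{2\}\}$, $\nu_{\{1\}}=\nu_{\{2\}}=\delta_a$ on $\Omega_i=\{a,b\}$ and uniform auxiliary measures $\mu_i$, the $\nu_0$ constructed in Theorem~\ref{unrestricted} has $|\nu_0|(\{(a,a)\})=3/4$, while the only positive measure with these marginals is $\delta_{(a,a)}$; concretely, taking $g_1=g_2$ with value $1$ at $a$ and $-1$ at $b$, the left side of $(\mathbf{N^\ast})$ is $2$ and the right side is $3/2$. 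So your instinct that the signed realization does not by itself carry the positivity needed for the one-sided estimate is correct; the missing ingredient is a hypothesis on $\mathfrak{T}$, not a cleverer combinatorial decomposition.
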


\begin{proof}
	By Theorem~\ref{unrestricted} we can find a signed measure $\nu$ such that  $\nu^T = \nu_T$ for all $T\in\mathfrak{T}$. Let $\overline{\nu}$ be the variation of this signed measure and let $\underline{\nu}$ be the constantly zero measure. It is easily checked that condition $(\mathbf{N^\ast})$ of Theorem~\ref{restricted} holds, and this gives the conclusion.
\end{proof}

We remark that similar, and in fact more general, results hold for finitely additive measures (also called charges) instead of measures, cf. \cite{HT}. Another remark is that the marginal problem can be seen as a particular case of the more general problem of finding a common extension for a family of measures (or charges) that coincide on the intersection of their domains, cf. \cite{Maharam} and \cite[Section 3.6]{Rao}.

\section{Homomorphisms to $L_1$ spaces}\label{s:homoL1}

Given two measures $\mu$ and $\nu$, we observe that if $T:L_1(\mu) \To L_1(\nu)$, is a lattice isometric isomorphism, then it preserves the measure, in the sense that $\int Tf\ d\nu = \int f\ d\mu$ for all $f\in L_1(\mu)$. This is because $\int f\ d\mu = \|f\vee 0\|_{L_1} - \|f\wedge 0\|_{L_1}$. In the following lemma we collect, for convenience, some elementary characterizations of the $L_1(\mu)$ lattices of $\sigma$-finite measures. Here, the symbol $\simeq$ stands for being lattice isometric.

\begin{lemma}\label{L1sigmafinito}
For a measure $\mu$ the following are equivalent:
\begin{enumerate}
	\item There exists a probability measure $\nu$ such that $L_1(\mu)\simeq L_1(\nu)$.
	\item There exists a finite measure $\nu$ such that $L_1(\mu)\simeq L_1(\nu)$.
	\item There exists a $\sigma$-finite measure $\nu$ such that $L_1(\mu)\simeq L_1(\nu)$.
	\item $L_1(\mu)$ is ccc (every family of strictly positive pairwise disjoint elements is countable)
	\item $\mu$ is $\sigma$-finite.
	\item There exists a function $f\in L_1(\mu)$ with full support.
	\item $L_1(\mu)$ is a weakly compactly generated Banach space.
\end{enumerate}
\end{lemma}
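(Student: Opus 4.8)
The plan is to run the equivalences as a single cycle that interleaves the three measure-theoretic conditions $(1)$–$(3),(5),(6)$ with the two intrinsic Banach-lattice conditions $(4),(7)$, so that $\sigma$-finiteness of $\mu$ gets detected through a lattice-invariant chain condition. The inclusions $(1)\Rightarrow(2)\Rightarrow(3)$ are immediate. The measure-theoretic core is $(5)\Leftrightarrow(6)$ together with $(6)\Rightarrow(1)$. For $(5)\Rightarrow(6)$ I would write $\Omega=\bigsqcup_n A_n$ with $\mu(A_n)<\infty$ and take $f=\sum_n c_n\mathbf 1_{A_n}$ with $c_n>0$ and $\sum_n c_n\mu(A_n)<\infty$ (say $c_n=2^{-n}/(1+\mu(A_n))$): then $f\in L_1(\mu)$ and $f>0$ a.e., so its support is full. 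Conversely a full-support $f\in L_1(\mu)$ exhibits $\Omega$, up to a null set, as $\bigcup_n\{|f|>1/n\}$ with each piece of finite measure, giving $(5)$. For $(6)\Rightarrow(1)$ I would normalise $w=|f|/\|f\|_1$, set $d\nu=w\,d\mu$ (a probability measure since $w>0$ a.e.), and check that $h\mapsto h/w$ is a lattice isometry of $L_1(\mu)$ onto $L_1(\nu)$, using $\int|h/w|\,d\nu=\int|h|\,d\mu$.

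Next I would deal with condition $(7)$. For $(1)\Rightarrow(7)$: for a probability measure $\nu$ the inclusion $j\colon L_2(\nu)\hookrightarrow L_1(\nu)$ is bounded, hence weak-to-weak continuous, so $j(B_{L_2})$ is weakly compact in $L_1(\nu)$ and its closed span is all of $L_1(\nu)$ (as $L_2$ is dense); thus $L_1(\nu)$, and any lattice-isomorphic $L_1(\mu)$, is weakly compactly generated. The reverse $(7)\Rightarrow(4)$ is the structural heart: if $L_1(\mu)$ fails ccc, then from the supports of a witnessing uncountable disjoint family of positive elements one extracts pairwise disjoint sets $(A_\alpha)_{\alpha\in\Gamma}$ of finite positive measure, $\Gamma$ uncountable; the normalised indicators $f_\alpha=\mathbf 1_{A_\alpha}/\mu(A_\alpha)$ span an isometric copy of $\ell_1(\Gamma)$, which is $1$-complemented via the contractive conditional-expectation projection $Pg=\sum_\alpha\bigl(\int_{A_\alpha}g\,d\mu\bigr)f_\alpha$. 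Since complemented subspaces of WCG spaces are WCG, $\ell_1(\Gamma)$ would be WCG; but $\ell_1(\Gamma)$ has the Schur property, so its weakly compact sets are norm-compact, hence separable, which is impossible for uncountable $\Gamma$. This contradiction gives $(4)$. The same band-projection picture yields $(3)\Rightarrow(4)$ at once: a $\sigma$-finite $\nu$ makes $L_1(\nu)$ ccc, and ccc is preserved by lattice isomorphisms because disjointness and strict positivity are purely lattice-theoretic.

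It then remains to close the cycle with $(4)\Rightarrow(6)$, which I expect to be the main obstacle. By Zorn's lemma I would take a maximal family $\{A_n\}$ of pairwise disjoint sets of finite positive measure; the indicators $\mathbf 1_{A_n}$ are disjoint strictly positive elements, so ccc forces the family to be countable, and $f=\sum_n c_n\mathbf 1_{A_n}\in L_1(\mu)$ (with summable $c_n\mu(A_n)$) is the natural candidate for a full-support function. The delicate point is exactly that the support $\bigcup_n A_n$ exhausts $\Omega$ up to a null set: maximality only forbids finite-measure positive sets \emph{disjoint from} $\bigcup_n A_n$, and one must rule out a ``purely infinite'' remainder carrying no set of finite positive measure (otherwise, e.g.\ for an infinite point mass, $L_1(\mu)$ is ccc yet $\mu$ is not $\sigma$-finite). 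This is automatic once $\mu$ is semifinite, which is the setting relevant here and the only one in which the assertion ``$\mu$ is $\sigma$-finite'' is faithfully seen by $L_1(\mu)$: then $\mu(\Omega\setminus\bigcup_n A_n)>0$ would itself produce a finite-measure positive subset contradicting maximality. With this caveat the remainder is null, $f$ has full support, and $(6)$ follows, whence $(5)$ and, through the density trick, $(1)$; this closes all the equivalences.
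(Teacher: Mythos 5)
Your proof is correct and, at its core, runs on the same engines as the paper's: a maximal disjoint family to pass from ccc to a full-support function, the change-of-density isometry $g\mapsto g/f$ for $(6)\Rightarrow(1)$, the weakly compact ball of $L_2(\nu)$ for $(1)\Rightarrow(7)$, and a complemented copy of $\ell_1(\Gamma)$ for $(7)\Rightarrow(4)$. The routing differs only mildly: the paper runs the cycle $(1)\Rightarrow(2)\Rightarrow(3)\Rightarrow(4)\Rightarrow(5)\Rightarrow(6)\Rightarrow(1)$ and proves the step involving $(7)$ in the contrapositive form ``if $\mu$ is not $\sigma$-finite then $L_1(\mu)$ contains a complemented $\ell_1(\Gamma)$ with $\Gamma$ uncountable'' --- that is, it really establishes $(7)\Rightarrow(5)$ and leans on the rest of the cycle --- whereas you negate $(4)$ directly, which is logically tidier and, unlike the paper's phrasing, immune to the issue below.

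The one substantive point where you go beyond the paper is the semifiniteness caveat in $(4)\Rightarrow(6)$, and you are right that it is not pedantry: for arbitrary measures the lemma, as literally stated, is false. Take $\mu$ to be Lebesgue measure on $[0,1]$ together with a single atom of infinite mass: every integrable function must vanish on the atom, so $L_1(\mu)\simeq L_1[0,1]$ and $(1)$--$(4)$ and $(7)$ all hold, yet $\mu$ is not $\sigma$-finite and no integrable function has full support, so $(5)$ and $(6)$ fail; your one-atom example $L_1(\mu)=\{0\}$ exhibits the same failure of $(4)\Rightarrow(5)$. The paper's proof of $(4)\Rightarrow(5)$ (``take a maximal pairwise disjoint family of sets with positive measure, that must be countable'') silently assumes that a remainder of positive measure must contain a subset of finite positive measure --- which is exactly semifiniteness. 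This does no harm downstream, since the measures to which the lemma is applied arise from Kakutani's representation theorem (the lattices $L_1(\nu_{x^\ast})$ of Section 3), and such representations can always be taken over semifinite measure spaces; but the statement should be read with that convention, as your proof makes explicit.
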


\begin{proof}
 Implications $(1)\Rightarrow (2) \Rightarrow (3) \Rightarrow (4)$ are obvious. For $(4)\Rightarrow (5)$ take a maximal pairwise disjoint family of sets with positive measure, that must be countable. For $(5)\Rightarrow (6)$, take a sequence $\{A_n\}$ of pairwise disjoint sets of finite measure that cover the space, and define $f=\sum\frac{1}{2^n\mu(A_n)}1_{A_n}$. For $(6)\Rightarrow (1)$ we can suppose that $f$ is positive of norm one. Then, on the same underlying measurable space consider the new measure $\nu(A) = \int_A f\ d\mu$. We have a lattice isometry $T:L_1(\mu)\To L_1(\nu)$ given by $T(g) = \frac{g}{f}$, with inverse $T^{-1}(h) = h\cdot f$. Also, $(1)\Rightarrow (7)$, as $B_{L_2}(\nu)$ is a weakly compact set in $L_1(\nu)$ whose linear span is dense. Finally, $(7)\Rightarrow (4)$, because if $\mu$ is not  $\sigma$-finite, then $L_1(\mu)$ contains a complemented copy of $\ell_1(\Gamma)$ for some uncountable $\Gamma$, which implies that $L_1(\mu)$ cannot be weakly compactly generated, cf. \cite[Ch. 11, Example (iv)]{fabianetal}
\end{proof}

Given a Banach space $E$, there are always plenty of operators $E\To\mathbb{R}$. This is a very powerful tool, that allows to study $E$ through its dual $E^\ast$. On the other hand, for a Banach lattice $X$, it may even happen that no nontrivial Banach lattice homomorphism $X\To\mathbb{R}$ exists. This is the case when $X=L_1(\mu)$ with $\mu$ nonatomic (this fact follows from the duality properties of lattice homomorphisms, cf. \cite[Theorem 1.4.19]{M-N}). 
In spite of this, there is a trick to produce nontrivial homomorphisms of the form $X\To L_1(\nu)$, that allows to reduce some problems for general Banach lattices to the case of lattices of the form $L_1(\nu)$.

Given a positive element $x^\ast \in X^\ast$, we define a lattice seminorm on $X$ given by $\|z\|_{x^\ast} = x^\ast(|z|)$. After making a quotient by the elements of norm 0, this becomes a norm that satisfies $\||x|+|y|\|_{x^\ast} = \|x\|_{x^\ast} +\|y\|_{x^\ast}$ for all $x,y$. This identity extends to the completion of this normed lattice, which, by Kakutani's theorem \cite[Theorem 1.b.2]{LT2}, is lattice isometric to a Banach lattice of the form $L_1(\nu_{x^\ast})$. The formal identity induces a homomorphism $\psi_{x^\ast}:X\To L_1(\nu_{x^\ast})$ of norm one. The measure $\nu_{x^\ast}$ works like an extension of $x^\ast$ in the sense that for every $z\in X$,
\begin{equation}\label{functional-integral}
x^\ast(z) = \int \psi_{x^\ast}(z) d\nu_{x^\ast}. \tag{$\sharp$}
\end{equation}
The measure $\nu_{x^\ast}$ is not completely determined by $x^\ast$ because $L_1(\nu)$ and $L_1(\nu')$ may be lattice isometric for different measures $\nu,\nu'$. However, we can consider the following

\begin{definition}
	A positive element $x^\ast\in X^\ast_+$ will be called $\sigma$-finite if $L_1(\nu_{x^\ast})$ is as in Lemma~\ref{L1sigmafinito}.
\end{definition}

\begin{lemma}\label{sigmafinitenorming}
	Let $X$ be a Banach lattice and $x\in X$ a positive element of norm one. Then there exists a $\sigma$-finite positive element   $x^\ast\in X^\ast$ such that $x^\ast(x)=1=\|x^\ast\|$.
\end{lemma}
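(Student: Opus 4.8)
The plan is to reduce the required $\sigma$-finiteness to a full-support statement about the image of $x$ under the canonical homomorphism, and then to engineer a norming functional whose mass is ``carried by $x$''. Recall from the discussion preceding the lemma that a positive $x^\ast\in X^\ast_+$ produces a lattice homomorphism $\psi_{x^\ast}\colon X\to L_1(\nu_{x^\ast})$ with dense range and $\|\psi_{x^\ast}(z)\|=x^\ast(|z|)$. By Lemma~\ref{L1sigmafinito}, part (6), it suffices to arrange that $\psi_{x^\ast}(x)$ has full support in $L_1(\nu_{x^\ast})$. Since $\psi_{x^\ast}$ is a homomorphism, for $z\in X$ one computes $\|\psi_{x^\ast}(|z|)-\psi_{x^\ast}(|z|)\wedge n\psi_{x^\ast}(x)\|=x^\ast((|z|-nx)^+)$, so $\psi_{x^\ast}(|z|)$ lies in the band generated by $\psi_{x^\ast}(x)$ exactly when $x^\ast((|z|-nx)^+)\to 0$. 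As the range is dense and lattice operations are continuous, full support of $\psi_{x^\ast}(x)$ is therefore equivalent to the \emph{carrier condition}: $x^\ast((|z|-nx)^+)\to 0$ for every $z\in X$.

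It thus remains to produce a positive norming functional satisfying the carrier condition. A positive norming functional exists: by Hahn--Banach pick $\phi\in X^\ast$ with $\|\phi\|=1=\phi(x)$ and replace it by $|\phi|\in X^\ast_+$, which satisfies $|\phi|(x)=1=\||\phi|\|$ because $x\geq 0$. Starting from such a $\phi$, I would define
$$x^\ast(z):=\lim_{n\to\infty}\phi(z\wedge nx)\qquad(z\in X_+),$$
the limit existing because $n\mapsto\phi(z\wedge nx)$ is nondecreasing and bounded above by $\phi(z)$. The key point is that $x^\ast$ is additive on $X_+$: this follows from the Riesz space inequalities $(z+w)\wedge nx\leq z\wedge nx+w\wedge nx$ and $z\wedge nx+w\wedge nx\leq(z+w)\wedge 2nx$ by applying $\phi$ and letting $n\to\infty$. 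Hence $x^\ast$ extends to a positive linear functional on $X$ with $0\leq x^\ast\leq\phi$ on $X_+$, so $\|x^\ast\|\leq 1$; and $x^\ast(x)=\lim_n\phi(x\wedge nx)=\phi(x)=1$, which forces $\|x^\ast\|=1$.

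Finally I would verify the carrier condition for this $x^\ast$. For $z\geq 0$ we have $(z-nx)^+=z-z\wedge nx$, so by linearity $x^\ast((z-nx)^+)=x^\ast(z)-x^\ast(z\wedge nx)$; since $(z\wedge nx)\wedge mx=z\wedge nx$ for $m\geq n$, the definition gives $x^\ast(z\wedge nx)=\phi(z\wedge nx)$, which tends to $x^\ast(z)$ by construction, whence $x^\ast((z-nx)^+)\to 0$. By the reduction of the first paragraph, $\psi_{x^\ast}(x)$ has full support, so $L_1(\nu_{x^\ast})$ is $\sigma$-finite by Lemma~\ref{L1sigmafinito}; that is, $x^\ast$ is $\sigma$-finite, and together with $x^\ast(x)=1=\|x^\ast\|$ this completes the proof. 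I expect the main obstacle to be precisely the construction of $x^\ast$ with the carrier property: an arbitrary positive norming functional need not be $\sigma$-finite, and the heart of the argument is this ``carrier projection'' $\phi\mapsto x^\ast$, whose legitimacy rests on the (routine but essential) Riesz-space additivity computation.
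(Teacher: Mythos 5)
Your proof is correct, but it certifies $\sigma$-finiteness by a genuinely different route than the paper, and the comparison is instructive. In fact the functional you construct is the \emph{same} one the paper constructs: starting from a positive norming $\phi=y^\ast$, the paper passes to the Kakutani representation $\psi_{y^\ast}:X\to L_1(\nu_{y^\ast})$, takes $A$ to be the support of $\psi_{y^\ast}(x)$, and sets $x^\ast(z)=\int_A \psi_{y^\ast}(z)\,d\nu_{y^\ast}$; since $\psi_{y^\ast}(z\wedge nx)=\psi_{y^\ast}(z)\wedge n\psi_{y^\ast}(x)\uparrow \psi_{y^\ast}(z)\chi_A$ for $z\geq 0$, monotone convergence shows this integral equals your order-theoretic limit $\lim_n\phi(z\wedge nx)$. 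The difference lies in the verification: the paper works inside the representation, defining the probability measure $\nu_{x^\ast}(B)=\int_B\psi_{y^\ast}(x)\,d\nu_{y^\ast}$ on $A$ and exhibiting the explicit map $\psi_{x^\ast}(z)=\left.\bigl(\psi_{y^\ast}(z)/\psi_{y^\ast}(x)\bigr)\right|_A$ as an isometry with dense range, thereby landing directly in condition (1) of Lemma~\ref{L1sigmafinito} (a probability measure); you stay intrinsic to $X$, proving additivity of the carrier limit on the cone and then invoking the equivalence between full support of $\psi_{x^\ast}(x)$ and your carrier condition, together with condition (6) of Lemma~\ref{L1sigmafinito}. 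Your route buys freedom from measure-theoretic bookkeeping (notably the density check that the paper must perform for its re-defined $\psi_{x^\ast}$), at the price of the Riesz-space computations; the paper's route is more explicit and gives the extra information that $\nu_{x^\ast}$ can be taken to be a probability measure, which is reused later (e.g.\ in Lemma~\ref{L1amalgamation} and Lemma~\ref{L_1 sublattice}). Two small points would make your argument airtight: (i) ``full support'' in Lemma~\ref{L1sigmafinito}(6) should be read in the lattice sense, namely that the band generated by the element is all of $L_1(\nu_{x^\ast})$ --- which is precisely what your carrier condition delivers; alternatively, observe that a principal band equal to the whole space forces ccc, i.e.\ condition (4) of that lemma; (ii) the passage from an additive, monotone functional on $X_+$ to a positive linear functional on $X$ is the standard Kantorovich extension and deserves an explicit word or citation.
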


\begin{proof}
First, we start with any positive $y^\ast\in X^\ast$ of norm one with $y^\ast(x)=1$. Find an associated measure space $(\Omega_{y^\ast},\Sigma_{y^\ast},\nu_{y^\ast})$ as above. We are going to produce then a second positive $x^\ast\in X^\ast$ with $x^\ast(x)=1$ and moreover $\nu_{x^\ast}$ will be taken to be a probability. Let $A = \{\omega\in \Omega_{y^\ast}: \psi_{y^\ast}(x)(\omega)>0\}$ be the support of $\psi_{y^\ast}(x)$. Define $$x^\ast(z) = \int_A \psi_{y^\ast}(z)d\nu_{y^\ast}$$
and the measure space $(A,\Sigma_{y^\ast}|_A,\nu_{x^\ast})$ by
$$\nu_{x^\ast}(B) = \int_B \psi_{y^\ast}(x) d\nu_{y^\ast}. $$
This is a probability measure by \eqref{functional-integral} above. In order to check that this works, we have to find a lattice isometric embedding $$\psi_{x^\ast}:(X,\|\cdot\|_{x^\ast})\To L_1(\nu_{x^\ast})$$ with dense range. The definition is
$$\psi_{x^\ast}(z) = \left. \frac{\psi_{y^\ast}(z)}{\psi_{y^\ast}(x)}\right|_A$$
First,
$$
\|\psi_{x^\ast}(z)\| = \int_A |\psi_{y^\ast}(z)|d\nu_{y^\ast} =  \int_A \psi_{y^\ast}(|z|)d\nu_{y^\ast} = x^\ast(|z|) = \|z\|_{x^\ast}.
$$
Second, for the density, given $f\in L_1(\nu_{x^\ast})$ and $\varepsilon>0$, notice that we can view $f\cdot\psi_{y^\ast}(x)$ as an element of $L_1(\nu_{y^\ast})$ (declaring value 0 out of $A$). Since $\psi_{y^\ast}$ has dense range, there exists $z\in Z$ such that
$$
\|\psi_{y^\ast}(z)-f\psi_{y^\ast}(x)\|_{L_1(\nu_{y^\ast})} < \varepsilon,
$$
and we get
$$
\|\psi_{x^\ast}(z)-f\|_{L_1(\nu_{x^\ast})} = \int_A |\psi_{x^\ast}(z)-f|\ \psi_{y^\ast}(x)\ d\nu_{y^\ast} < \varepsilon.
$$
\end{proof}

Notice that from Lemma~\ref{sigmafinitenorming}, one recovers the well-known fact that every Banach lattice $X$ is lattice isometric to a sublattice of an $\ell_\infty$-sum of $L_1(\mu)$ lattices (cf. \cite[Lemma 3.4]{Lotz}). One just has to put all morphisms $\psi_{x^\ast}:X\To L_1(\nu_{x^\ast})$ together.

We finish this section by recalling Maharam's classification of $L_1$ Banach lattices, that will be relevant in some later discussions. The original reference would be \cite{Maharam42}, but we also refer to Sections 14 and 15 of \cite{Lacey}, culminating at the corollary of Theorem 15.3. If $\kappa$ is a cardinal, the product $[0,1]^\kappa$ is endowed with the product measure of the Lebesgue measure on each factor. If $\tau$ is a cardinal and $X$ a Banach lattice, $\ell_1(\tau,X)$ will be the $\ell_1$-sum of $\tau$ many copies of $X$.

\begin{theorem}[Maharam]\label{Maharamclassfication}
Every Banach lattice of the form $L_1(\mu)$ is lattice isometric  to an $\ell_1$-sum of Banach lattices of the form $\ell_1(\Gamma)$ or $L_1([0,1]^{\kappa})$,
	$$ L_1(\mu) \simeq \ell_1(\Gamma) \oplus_1 \left(\bigoplus_{i\in I} \ell_1(\tau_i,L_1([0,1]^{\kappa_i})) \right)_{\ell_1}.$$
\end{theorem}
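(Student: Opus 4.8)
The plan is to translate the statement into the language of measure algebras and invoke the structure theory initiated by Maharam. The first reduction is to observe that $L_1(\mu)$, as a Banach lattice, is insensitive to the purely infinite part of $\mu$ (every $f\in L_1(\mu)$ has $\sigma$-finite support, by Lemma~\ref{L1sigmafinito}) and depends only on the associated localizable measure algebra $\mathfrak{A}=\mathfrak{A}(\mu)$ of measurable sets modulo null sets; via Kakutani's representation this amounts to saying that a lattice isometry of $L_1$-spaces corresponds to an isomorphism of measure algebras. The second, repeatedly used, observation is that disjoint positive elements of an $L_1$-lattice add norms, so that any band sum $\bigoplus_\alpha B_\alpha$ with each $B_\alpha$ an $L_1$-lattice is automatically an $\ell_1$-sum. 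Thus it suffices to decompose $\mathfrak{A}$ into disjoint bands of the three prescribed shapes and to recognize each shape.

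Next I would peel off the atomic part. Letting $\{a_\gamma:\gamma\in\Gamma\}$ enumerate the atoms of $\mathfrak{A}$, each spans a one-dimensional band, and their band sum is $\ell_1(\Gamma)$; what remains is the nonatomic part, where the real work lies. On this part I introduce the Maharam type $\kappa(a)$ of an element $a$, the least cardinality of a subfamily of the relative algebra $\mathfrak{A}_a$ that is dense for the measure metric $d(x,y)=\mu(x\triangle y)$, and call $a$ homogeneous if every nonzero $b\le a$ satisfies $\kappa(b)=\kappa(a)$. The decomposition step is Maharam's lemma that every nonzero element dominates a nonzero homogeneous one; a maximal disjoint family of homogeneous elements (which exhausts $\mathfrak{A}$ by localizability) then splits the nonatomic part into homogeneous bands, which I group according to their common type $\kappa_i$.

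The heart of the argument is the identification of the homogeneous pieces: a homogeneous nonatomic probability algebra of type $\kappa$ is isomorphic to the measure algebra of $\{0,1\}^\kappa$, equivalently to that of $[0,1]^\kappa$, with the classical isomorphism $[0,1]^{\aleph_0}\cong[0,1]$ covering the countable case. This is carried out by a transfinite construction of a stochastically independent family $(e_\xi)_{\xi<\kappa}$ of elements of measure $\tfrac12$ generating a dense subalgebra, at each stage using homogeneity to produce a new element independent from the subalgebra built so far while keeping the type under control. A homogeneous band of type $\kappa_i$ but of possibly large total measure is in turn a disjoint sum of $\tau_i$ many probability-sized homogeneous pieces, each giving $L_1([0,1]^{\kappa_i})$; by the norm-additivity observation this band is $\ell_1(\tau_i,L_1([0,1]^{\kappa_i}))$. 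Reassembling the atomic band and the bands for each type $\kappa_i$ yields the displayed formula.

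The main obstacle is precisely this last identification of homogeneous algebras of a fixed type: the transfinite back-and-forth requires, at each step, locating a new $\tfrac12$-element independent from the current subalgebra and verifying that the independent family produced is genuinely dense, so that one lands onto all of $\{0,1\}^\kappa$ rather than a proper subalgebra, and it is here that homogeneity is indispensable. The surrounding bookkeeping --- the semifinite reduction, the exhaustion by a maximal disjoint family, and the matching of the cardinal invariants $\Gamma$, $\tau_i$ and $\kappa_i$ --- is routine by comparison. Since the result is classical, within the paper it is natural to quote it from the development in \cite{Lacey} rather than reproduce this argument.
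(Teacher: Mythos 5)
Your sketch is a faithful outline of the classical Maharam argument --- reduction to the (semifinite) measure algebra, splitting off the atoms as $\ell_1(\Gamma)$, decomposition into homogeneous bands via Maharam's lemma, and the transfinite identification of a homogeneous algebra of type $\kappa$ with the measure algebra of $\{0,1\}^\kappa$ --- and it is correct, with the genuinely hard step (and the role of homogeneity in it) correctly identified. The paper itself gives no proof: it quotes the result from Sections 14 and 15 of \cite{Lacey} (the corollary of Theorem 15.3), which is precisely the development you outline and the citation you yourself recommend in closing.
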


The above expression is actually unique if the cardinals $\kappa_i$ and $\tau_i$ are always taken to be either $1$ or uncountable. This restriction is necessary because if $1\leq \alpha\leq \aleph_0$ then $L_1([0,1])\simeq L_1([0,1]^\alpha)$ and $L_1([0,1]^\kappa)\simeq \ell_1(\alpha,L_1([0,1]^\kappa)$, cf.~\cite[Theorem 14.10]{Lacey}. Sometimes the products $\{0,1\} ^\kappa$ of the $\frac{1}{2}$-discrete measure on $\{0,1\}$ are considered instead of $[0,1]^\kappa$, but this is irrelevant since $L_1([0,1]^\kappa) \simeq L_1(\{0,1\}^\kappa)$ when $\kappa$ is infinite.

For the following elementary lemma it is more convenient to write Maharam's decomposition without grouping summands by cardinalities. All cardinals $\kappa_i$ and $\kappa'_j$ are assumed to be nonzero.

\begin{lemma}\label{Maharamlemma}
Suppose that we have two $L_1$ spaces that can be written in the form $$L_1(\mu)\simeq \left(\bigoplus_{i\in I} L_1([0,1]^{\kappa_i}) \right)_{\ell_1}, \  L_1(\nu)\simeq \left(\bigoplus_{j\in J} L_1([0,1]^{\kappa'_j}) \right)_{\ell_1} $$
in such a way that $|J|\leq|I|$ and $\sup_{j\in J}\kappa'_j\leq \min_{i\in I}\kappa_i$. Then, for every separable sublattice $X\subset L_1(\mu)$, there exists  $Y\simeq L_1(\nu)$ such that $X\subset Y \subset L_1(\mu)$.
\end{lemma}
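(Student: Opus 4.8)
The plan is to build $Y$ as an internal $\ell_1$-decomposition inside $L_1(\mu)=\bigoplus_{i\in I}L_1([0,1]^{\kappa_i})$ that engulfs $X$ and, summand by summand, reassembles into a copy of $L_1(\nu)$. The first move is to \emph{localize} $X$. Writing $P_i$ for the band projection onto the $i$-th summand, the $\ell_1$-sum structure forces every element of $L_1(\mu)$ to have countable support, so a countable dense subset of $X$ already lives on a countable index set $I_0\subseteq I$, whence $X\subseteq\bigoplus_{i\in I_0}L_1([0,1]^{\kappa_i})$. Each $\overline{P_i(X)}$ is separable, and since (mod null sets) any element of the product $\sigma$-algebra depends on countably many coordinates, one can find a countable $C_i\subseteq\kappa_i$ supporting all of $\overline{P_i(X)}$. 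Setting $W_i:=L_1([0,1]^{C_i})\subseteq L_1([0,1]^{\kappa_i})$, a routine density and closedness argument gives $X\subseteq W:=\bigoplus_{i\in I_0}W_i$, a separable sub-$\ell_1$-sum of $L_1(\mu)$.

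Next, for each $i\in I_0$ I would \emph{enlarge} $W_i$ within its own summand to a copy of the appropriate factor of $L_1(\nu)$. Fix an assignment $f:I_0\to J$ (specified below) and put $j=f(i)$. Using $\kappa'_j\le\min_i\kappa_i\le\kappa_i$, choose a coordinate set $D_i$ with $C_i\subseteq D_i\subseteq\kappa_i$ and $|D_i|$ calibrated so that $Z_i:=L_1([0,1]^{D_i})$ is lattice isometric to $L_1([0,1]^{\kappa'_j})$: take $|D_i|=\kappa'_j$ when $\kappa'_j$ is uncountable, and $D_i$ any nonempty countable set containing $C_i$ when $\kappa'_j$ is finite, using $L_1([0,1])\simeq L_1([0,1]^\alpha)$ for $1\le\alpha\le\aleph_0$ from the discussion following Theorem~\ref{Maharamclassfication}. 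Since $C_i\subseteq D_i$ we get $W_i\subseteq Z_i$, and the $Z_i$ are pairwise band-disjoint, living in distinct summands.

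Finally I would \emph{assemble}. Set $Y_j:=\bigoplus_{i\in f^{-1}(j)}Z_i$ for $j\in f(I_0)$, supplemented, for each $j\in J\setminus f(I_0)$, by one fresh copy of $L_1([0,1]^{\kappa'_j})$ placed in an unused summand (possible as $\kappa'_j\le\kappa_i$). Because each fibre $f^{-1}(j)$ is countable and each $Z_i\simeq L_1([0,1]^{\kappa'_j})$ with the summand nonatomic, the isometric identity $L_1([0,1]^{\kappa'_j})\simeq\ell_1(\alpha,L_1([0,1]^{\kappa'_j}))$ for $1\le\alpha\le\aleph_0$, again recorded after Theorem~\ref{Maharamclassfication}, yields $Y_j\simeq L_1([0,1]^{\kappa'_j})$. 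As the $Y_j$ are band-disjoint, $Y:=\bigoplus_{j\in J}Y_j\simeq L_1(\nu)$, and $X\subseteq W\subseteq\bigoplus_{i\in I_0}Z_i\subseteq Y\subseteq L_1(\mu)$, as required. The two hypotheses are consumed precisely in choosing $f$: when $|I_0|\ge|J|$ take $f$ surjective with countable fibres and no fresh copies are needed; when $|I_0|<|J|$ take $f$ injective and inject $J\setminus f(I_0)$ into $I\setminus I_0$, which is possible exactly because $|J|\le|I|$ (splitting into $I$ infinite, where $|I\setminus I_0|=|I|$, and $I$ finite, where cardinal subtraction suffices).

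The main obstacle is conceptual and lies in the assembly step: a priori an internal family of band-disjoint copies of $L_1([0,1]^{\kappa'_j})$ can only span an $\ell_1$-sum of such copies, not the ``connected'' space $L_1([0,1]^{\kappa'_j})$ itself. What rescues the argument is the isometric flexibility of nonatomic $L_1$-lattices noted after Theorem~\ref{Maharamclassfication}, by which a countable $\ell_1$-sum of copies of $L_1([0,1]^{\kappa'_j})$ is isometric to a single copy; this is what allows several of the localized pieces $W_i$ to be absorbed into one factor $Y_j$ when $J$ is small. I expect the only remaining delicate point to be verifying that $\kappa'_j\ge1$ throughout, so that all summands in play are genuinely nonatomic and these identities apply, which holds under the standing convention that the summands $L_1([0,1]^{\kappa})$ of Maharam's decomposition account for the nonatomic part.
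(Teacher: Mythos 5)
Your proof is correct and follows essentially the same route as the paper's: localize the separable $X$ into countably many summands and countably many coordinates (so $X\subset\left(\bigoplus_{i\in I_0}L_1([0,1]^{A_i})\right)_{\ell_1}$ with $I_0$, $A_i$ countable), then enlarge index and coordinate sets to reassemble a copy of $L_1(\nu)$, using the isometric identities $L_1([0,1])\simeq L_1([0,1]^\alpha)$ and $L_1([0,1]^{\kappa})\simeq\ell_1(\alpha,L_1([0,1]^{\kappa}))$ for $1\le\alpha\le\aleph_0$. The only difference is bookkeeping: the paper first normalizes $J$ to be infinite and then matches an enlarged $I_0$ bijectively with $J$, whereas you keep $J$ as given and absorb the countable fibres of a map $f:I_0\to J$ into single factors -- the same absorption identity either way, with your version spelling out the details the paper leaves to the reader.
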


\begin{proof}
	We can suppose that $J$ is infinite because, as we mentioned before, we have that $L_1([0,1]^\kappa)\simeq \ell_1(\aleph_0,L_1([0,1]^\kappa)$. Every element $f\in L_1(\mu)$ is supported on countably many summands of the $\ell_1$-sum. Moreover, each nonzero component $f_i\in L_1([0,1]^{\kappa_i})$ of $f$ depends on countably many coordinates of the cube. Since $X$ is separable, this means that we have
	$$X \subset \left(\bigoplus_{i\in I_0} L_1([0,1]^{A_i}) \right)_{\ell_1} \subset L_1(\mu),$$
	where $I_0$ and $A_i\subset \kappa_i$ are countable sets. Here we identify $L_1([0,1]^{A_i})$ as the set of all $g\in L_1([0,1]^{\kappa_i})$ that (some representative) depend only on coordinates from $A_i$, in the sense that $x|_{A_i}=y|_{A_i}$ implies $g(x)=g(y)$. By enlarging the set $I_0$ and taking large enough sets $A'_i$ for $i\in I_0$ we get a copy of $L_1(\nu)$ as required.
\end{proof}

\section{Amalgamation of measure spaces and Banach lattices}\label{s:amalgamation}

A function $f:\Omega\To \Omega'$ between measure space $(\Omega,\nu)$ and $(\Omega',\nu')$ is measure-preserving if it is measurable and $\nu'(A) = \nu(f^{-1}(A))$ for all $A\in \Sigma'$. As a consequence of Kellerer's marginal problem, we can stablish the following property of measure-preserving maps, which we will refer to as backwards-amalgamation:

\begin{theorem}[back-amalgamation of measure-preserving maps]\label{measureamalgamation} Let $(\Omega_i,\nu_i)$ be finite positive measure spaces for $i=0,1,2$. If $f_1:\Omega_1\To \Omega_0$ and $f_2:\Omega_2\To \Omega_0$ are measure-preserving, then there exist a finite positive measure space $(\Omega_3,\nu_3)$ and measure-preserving mappings $g_1:\Omega_3\To \Omega_1$, $g_2:\Omega_3\To \Omega_2$  with $f_1\circ g_1 = f_2\circ g_2$.
\end{theorem}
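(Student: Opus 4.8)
The plan is to realize $\Omega_3$ as a measure on a product space built by a single application of the positive marginal problem, Corollary~\ref{positivemarginal}; the real work is encoding the two point-maps $f_1,f_2$ as marginal constraints. Concretely, I would take $I=\{0,1,2\}$ with factors $(\Omega_0,\Sigma_0),(\Omega_1,\Sigma_1),(\Omega_2,\Sigma_2)$ and work on the triple product $\Omega_{\{0,1,2\}}=\Omega_0\times\Omega_1\times\Omega_2$. The maps enter through their graphs: let $h_1\colon\Omega_1\to\Omega_0\times\Omega_1$, $h_1(\omega_1)=(f_1(\omega_1),\omega_1)$, and $h_2\colon\Omega_2\to\Omega_0\times\Omega_2$, $h_2(\omega_2)=(f_2(\omega_2),\omega_2)$, both measurable, and push the given measures forward to obtain finite positive measures $\mu_{01}:=(h_1)_*\nu_1$ on $\Sigma_{\{0,1\}}$ and $\mu_{02}:=(h_2)_*\nu_2$ on $\Sigma_{\{0,2\}}$.

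Now set $\mathfrak{T}=\{\{0,1\},\{0,2\}\}$ with prescribed marginals $\mu_{01},\mu_{02}$. Since $\{0,1\}\cap\{0,2\}=\{0\}$, the compatibility condition of Corollary~\ref{positivemarginal} reduces to checking that the $\Omega_0$-marginals agree: a direct computation gives $\mu_{01}^{\{0\}}(A)=\nu_1(f_1^{-1}(A))$ and $\mu_{02}^{\{0\}}(A)=\nu_2(f_2^{-1}(A))$, and both equal $\nu_0(A)$ precisely because $f_1$ and $f_2$ are measure-preserving. This is the one place where the hypothesis is used. Corollary~\ref{positivemarginal} then supplies a finite positive measure $\nu_3:=\lambda$ on $\Omega_0\times\Omega_1\times\Omega_2$ with $\lambda^{\{0,1\}}=\mu_{01}$ and $\lambda^{\{0,2\}}=\mu_{02}$.

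Finally I would take $\Omega_3=\Omega_0\times\Omega_1\times\Omega_2$ and let $g_1,g_2$ be the coordinate projections onto $\Omega_1,\Omega_2$. That they are measure-preserving is immediate from the marginal identities: $\nu_3(g_1^{-1}(A))=\lambda^{\{1\}}(A)=(\mu_{01})^{\{1\}}(A)=\nu_1(A)$, and symmetrically for $g_2$. For the commuting relation, observe that for each measurable $A\subset\Omega_0$ the set $\{f_1(\omega_1)\in A\}$ and the cylinder $\{\omega_0\in A\}$ differ by a $\lambda$-null set, since their symmetric difference depends only on the coordinates $\{0,1\}$ and pulls back under $h_1$ to the empty set; the same holds with $f_2,\omega_2$. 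Hence $f_1\circ g_1=f_2\circ g_2$ as measurable maps modulo $\nu_3$-null sets, which is the relevant notion in the measure category.

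The main obstacle is not the construction of $\lambda$, which is handed to us by Kellerer, but the bookkeeping that converts the point-maps $f_1,f_2$ into marginal data; the graph trick, together with the observation that the intersection index is the single coordinate $\{0\}$, is what makes the compatibility condition collapse to measure-preservation. A secondary subtlety is that $f_1\circ g_1=f_2\circ g_2$ can in general only be guaranteed almost everywhere, because the diagonal of $\Omega_0\times\Omega_0$ need not be product-measurable; one recovers genuine pointwise equality by passing to the fibre product $\Omega_1\times_{\Omega_0}\Omega_2$ when $\Omega_0$ is, say, standard Borel, but the almost-everywhere statement is all that is needed for the later passage to $L_1$ lattices through Iwanik's theorems.
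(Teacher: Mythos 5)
Your construction coincides with the paper's own proof in all but the last step: the paper uses the same graph pushforwards (its $\nu_{01},\nu_{02}$ are your $\mu_{01},\mu_{02}$), the same appeal to Corollary~\ref{positivemarginal} (the paper also includes the singletons $\{0\},\{1\},\{2\}$ in $\mathfrak{T}$, which is redundant since $\lambda^{\{1\}}=\mu_{01}^{\{1\}}=\nu_1$ automatically), and the coordinate projections. The divergence is at the end, and it is a genuine shortfall relative to the statement being proved: the theorem asserts the identity $f_1\circ g_1=f_2\circ g_2$ of point maps, whereas on the full product you only obtain that, for each fixed $A\in\Sigma_0$, the preimages $(f_1\circ g_1)^{-1}(A)$ and $(f_2\circ g_2)^{-1}(A)$ agree up to $\lambda$-null sets. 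Be aware that this is weaker even than pointwise almost-everywhere equality: agreement of preimages set-by-set does not produce a single null set off which the two maps coincide unless $\Sigma_0$ is countably generated and separates points, and indeed in your construction the maps can differ at every single point of the product. So, as a proof of the theorem as written, the proposal stops one step short.

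The paper closes that step not by assuming $\Omega_0$ standard Borel, but by cutting down to $\Omega_3=\{(x_0,x_1,x_2): x_0=f_1(x_1)=f_2(x_2)\}$, equipped with the traces of the product $\sigma$-algebra and of $\nu_{012}$, and taking $g_1,g_2$ to be the restricted projections: on $\Omega_3$ the commutation relation holds everywhere by construction, and measure preservation of the restricted projections follows from null-set computations of exactly the kind you already carried out (one shows the complement of $\Omega_3$ is negligible, so restriction does not disturb the marginals). For this trace measure to be well defined one needs precisely that every measurable set disjoint from $\Omega_3$ is $\nu_{012}$-null; your symmetric-difference computation is the measurable surrogate for that fact. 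That said, your measurability worry is not baseless and in fact cuts against the paper too: the paper's proof evaluates $\nu_{012}\{x_0\neq f_1(x_1)\}$ as though the complement of the graph were product-measurable, which it need not be, and for badly non-separating $\sigma$-algebras the pointwise statement can outright fail (take $\Omega_0$ a two-point set with the indiscrete $\sigma$-algebra and $\Omega_1,\Omega_2$ singletons mapping to the two distinct points: both maps are measure-preserving, yet no amalgamation with genuine pointwise equality exists). Your measure-algebra--level conclusion is the formulation that survives in full generality, and it is all that the application in Lemma~\ref{L1amalgamation} requires, since there the maps only enter through the composition operators $h\mapsto h\circ g_i\circ$; but if the goal is the theorem as stated, you should perform the paper's restriction to the fibre-product set and justify the trace measure.
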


\begin{proof}
We  take the product space $\Omega_0\times\Omega_1\times\Omega_2$. For $i=1,2$, we consider the measure $\nu_{0i}$ on the product $(\Omega_0\times\Omega_i,\Sigma_0\otimes\Sigma_i)$ given by $$\nu_{0i}(A) = \nu_i\{x\in\Omega_i : (f_i(x),x)\in A\}.$$
We apply Corollary~\ref{positivemarginal} to $$\mathfrak{T} = \{\{0\},\{1\},\{2\},\{0,1\},\{0,2\}\}$$
and the associated measures $\nu_0,\nu_1,\nu_2,\nu_{01},\nu_{02}$. The intersection hypothesis is easily checked, as for $i=1,2$ and $A\in \Sigma_0$, $B\in\Sigma_i$:
$$\nu_{0i}(A)^{\{0\}} = \nu_{0i}(A\times \Omega_i) = \nu_i\{x\in \Omega_i : (f_i(x),x)\in A\times\Omega_i \} = \nu_i(f_i^{-1}(A)) = \nu_0(A),$$
$$\nu_{0i}(B)^{\{i\}} = \nu_{0i}(\Omega_0\times B) = \nu_i\{x\in \Omega_i : (f_i(x),x)\in \Omega_0\times B \} = \nu_i(B).$$

The conclusion is that there exists a positive measure $\nu_{012}$ on the product $$(\Omega_0\times\Omega_1\times\Omega_2,\Sigma_0\otimes\Sigma_1\otimes\Sigma_2)$$ with marginal measures $\nu_0,\nu_1,\nu_2,\nu_{01},\nu_{02}$ at the respective sets of coordinates. Now consider the set
$$\Omega_3 = \{(x_0,x_1,x_2) \in \Omega_0\times\Omega_1\times\Omega_2 : x_0 = f_1(x_1) = f_2(x_2) \}$$
and endow it with $\Sigma_3$ and $\nu_3$, the restrictions of the product $\sigma$-algebra and the measure $\nu_{012}$, respectively.
Notice that $\Omega_3$ is in fact a set of full measure, because for $i=1,2$, we have
\begin{align}\label{distinctfi}
 \nu_{012}\{(x_0,x_1,x_2) : x_0\neq f_i(x_i)\} &= \nu_{012}^{ \{0,i\} }\{(x_0,x_i) : x_0\neq f_i(x_i)\} \\
\nonumber & =\nu_{0i}\{(x_0,x_i) : x_0\neq f_i(x_i)\} \\
\nonumber &=\nu_i\{x_i : (f_i(x_i),x_i)\in\{(x_0,x_i) : x_0\neq f_i(x_i)\} \} \\
\nonumber &= 0.
\end{align}

Let $g_1:\Omega_3\To \Omega_1$ and $g_2:\Omega_3\To \Omega_2$ be the projections onto the respective coordinates. These satisfy the desired properties. Indeed, it is clear that
$$
f_1 g_1(x_0,x_1,x_2) = x_0 = f_2 g_2(x_0,x_1,x_2)
$$
for all $(x_0,x_1,x_2)\in \Omega_3$. Finally, concerning the measure preserving property, for $A\in \Sigma_i$, $i=1,2$, we have
$$
\nu_3(g_i^{-1}(A)) = \nu_{012}\{ (x_0,x_1,x_2) : x_0 = f_1(x_1)=f_2(x_2), x_i\in A \},
$$
but by \eqref{distinctfi} above, and the definition of $\nu_{0i}$, this implies that
\begin{align*}
\nu_3(g_i^{-1}(A)) &= \nu_{012}\{ (x_0,x_1,x_2) : x_0 = f_i(x_i), x_i\in A \} \\
&= \nu_{012}^{\{0,i\}}\{ (x_0,x_i) : x_0 = f_i(x_i), x_i\in A \} \\
&= \nu_{0i}\{ (x_0,x_i) : x_0 = f_i(x_i), x_i\in A \} \\
&= \nu_i(A).
\end{align*}
\end{proof}

As a side remark, Theorem~\ref{measureamalgamation} is also true if we change everywhere \emph{positive measure} by \emph{signed measure}, just by calling Theorem~\ref{unrestricted} instead of Corollary~\ref{positivemarginal}.

\begin{lemma}\label{L1amalgamation}
	Let $\nu_0,\nu_1,\nu_2$ be positive measures such that $\nu_0$ is $\sigma$-finite. If we have isometric lattice embeddings $u_1:L_1(\nu_0)\To L_1(\nu_1)$ and $u_2:L_1(\nu_0)\To L_1(\nu_2)$, then we can find another positive measure $\nu_3$ and isometric lattice embeddings $v_1:L_1(\nu_1)\To L_1(\nu_3)$ and $v_2:L_1(\nu_2)\To L_1(\nu_3)$ with $v_1\circ u_1 = v_2 \circ u_2$.
\end{lemma}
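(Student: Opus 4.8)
The plan is to use Iwanik's theorems \cite{Iwanik} as a dictionary that turns lattice isometric embeddings between $L_1$ spaces into measure-preserving point maps in the reverse direction, apply the back-amalgamation of measure-preserving maps (Theorem~\ref{measureamalgamation}) to these point maps, and then translate the resulting square back into lattice embeddings. The contravariance is exactly what is needed: a measure-preserving $f\colon\Omega\To\Omega'$ induces the composition operator $g\mapsto g\circ f$, which is an isometric lattice embedding $L_1(\nu')\To L_1(\nu)$, and the composite of two such point maps induces the composite embedding, so a commuting square of point maps produces a commuting square of embeddings.

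First I would normalize. Since $\nu_0$ is $\sigma$-finite, by Lemma~\ref{L1sigmafinito} I may assume $\nu_0$ is a probability, so that $1_{\Omega_0}$ is a weak unit of $L_1(\nu_0)$. Put $e_i=u_i(1_{\Omega_0})\geq 0$, which has norm $1$, let $S_i=\{e_i>0\}$ be its support, and set $\mu_i=e_i\,\nu_i$, a probability measure carried by $S_i$. The set $S_i$ is $\sigma$-finite for $\nu_i$ (as $\{e_i>1/n\}$ has $\nu_i$-measure at most $n$), and since $1_{\Omega_0}$ is a weak unit one checks that the whole range of $u_i$ is supported on $S_i$. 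Multiplication by $e_i$ is a lattice isometry $R_i\colon L_1(\mu_i)\To L_1(\nu_i|_{S_i})$, and $\bar u_i:=R_i^{-1}u_i\colon L_1(\nu_0)\To L_1(\mu_i)$ is then a \emph{unital} isometric lattice embedding. At this point I would invoke Iwanik's representation to write $\bar u_i(g)=g\circ f_i$ for measure-preserving maps $f_i\colon(\Omega_i,\mu_i)\To(\Omega_0,\nu_0)$; removing the weight $e_i$ beforehand is what lets me present $\bar u_i$ as a pure composition operator.

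Next I would feed $f_1,f_2$ into Theorem~\ref{measureamalgamation}, obtaining a finite positive space $(\Omega_3^0,\lambda)$ and measure-preserving $g_1,g_2$ into $(\Omega_1,\mu_1),(\Omega_2,\mu_2)$ with $f_1\circ g_1=f_2\circ g_2$. The composition operators $C_i(\phi)=\phi\circ g_i$ are unital isometric lattice embeddings $L_1(\mu_i)\To L_1(\lambda)$, so $V_i:=C_i\circ R_i^{-1}$ isometrically embeds the band $L_1(\nu_i|_{S_i})$ into $L_1(\lambda)$. To account for the part of $\Omega_i$ that $u_i$ does not see, I would enlarge the space: set $\nu_3=\lambda\sqcup \nu_1|_{\Omega_1\setminus S_1}\sqcup \nu_2|_{\Omega_2\setminus S_2}$ as a disjoint union, giving $L_1(\nu_3)=L_1(\lambda)\oplus_1 L_1(\nu_1|_{\Omega_1\setminus S_1})\oplus_1 L_1(\nu_2|_{\Omega_2\setminus S_2})$, and define $v_i$ to act by $V_i$ on the $S_i$-band and by the identity onto the corresponding extra summand (and by zero onto the other one). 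Each $v_i$ is then an isometric lattice embedding. Finally, since $u_i(g)$ lives in the $S_i$-band, $v_iu_i(g)=C_i\bar u_i(g)=g\circ(f_i\circ g_i)$ inside $L_1(\lambda)$, so the identity $f_1g_1=f_2g_2$ yields $v_1u_1=v_2u_2$.

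I expect two points to carry the weight of the argument. The conceptual one is the bookkeeping around the supports $S_i$: the embedding $u_i$ only records the band generated by the weak unit $e_i$, so $v_i$ must embed the complementary band $L_1(\nu_i|_{\Omega_i\setminus S_i})$ by hand as a disjoint $\ell_1$-summand, and this is also why no $\sigma$-finiteness is needed on $\nu_1,\nu_2$. The technical one is the correct invocation of Iwanik: I need a genuine \emph{point} map $f_i$ realizing the measure-algebra embedding determined by $\bar u_i$, which is precisely where the $\sigma$-finiteness of $\nu_0$ (and the representability of its measure algebra) is used; verifying that the hypotheses of \cite{Iwanik} apply to $\mu_i$ (a finite, hence $\sigma$-finite, measure) is the step I would treat most carefully.
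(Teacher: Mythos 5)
Your proposal is correct and follows essentially the same route as the paper's proof: normalize $\nu_0$ to a probability, divide out the weak-unit images $e_i=u_i(1_{\Omega_0})$ to obtain unital (measure-preserving) embeddings into the weighted spaces $L_1(e_i\,\nu_i)$ (the paper's $\tilde\nu_i$), represent these via Iwanik as composition operators, amalgamate the point maps by Theorem~\ref{measureamalgamation}, and finally adjoin the complementary bands $L_1(\nu_i|_{\Omega_i\setminus S_i})$ as $\ell_1$-summands. The only cosmetic difference is that you inline the paper's two-case structure (finite measure-preserving case first, then reduction) into a single argument.
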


\begin{proof}
As a first case, we suppose that all measures $\nu_0,\nu_1,\nu_2$ are finite and the embeddings $u_1,u_2$ are measure preserving. In that case, a result of Iwanik \cite{Iwanik} provides, for $i=1,2$ measure-preserving $f_i:\Omega_i\To \Omega_0$ such that $u_i(h) = h\circ f_i$ for all $h\in L_1(\nu_i)$. Although Iwanik's results require the measure space to be homeomorphic to a Borel subset of the Hilbert cube, Maharam's theorem allows us to apply them here without loss of generality. We can then invoke Theorem~\ref{measureamalgamation} and define $v_i(h) = g_i\circ h$.

We consider now the general case. By Lemma~\ref{L1sigmafinito}, we can suppose that $\nu_0$ is a probability measure. Let $\mathbb{I}_0\in L_1(\nu_0)$ be the constant function equal to 1. For $i=1,2$, we consider $\mathbb{I}_i = u_i(\mathbb{I}_0)$, $S_i$ the support of $\mathbb{I}_i$, the measure $\tilde{\nu}_i$ on $S_i$ given by $\tilde{\nu}_i(B) = \int_B \mathbb{I}_i\ d\nu_i$, and $\tilde{u}_i:L_1(\nu_0) \To L_1(\tilde{\nu_i})$ given by
$$
\tilde{u}_i(h) = \left.\frac{u_i(h)}{\mathbb{I}_i} \right|_{S_i}.
$$
The measures $\nu_0,\tilde{\nu}_1,\tilde{\nu}_2$ are finite (in fact probability measures), and $\tilde{u}_1,\tilde{u}_2$ are measure preserving isometric embeddings. So we can apply the first case considered in this proof, and we obtain lattice isometric embeddings $\tilde{v}_i:L_1(\tilde{\nu}_i)\To L_1(\tilde{\nu}_3)$ with $\tilde{v}_1\circ \tilde{u}_1 =  \tilde{v}_2\circ \tilde{u}_2$. Finally, we take
$$
L_1(\nu_3) = L_1(\tilde{\nu}_3)\oplus_1 L_1(\nu_1|_{\Omega_1\setminus S_1})\oplus_1 L_1(\nu_2|_{\Omega_1\setminus S_2})
$$
and $v_i:L_1(\nu_i) \To L_1(\nu_3)$ as
$$
v_1(h) = \tilde{v}_1\left(\left.\frac{h}{\mathbb{I}_1}\right|_{S_1}\right) \oplus h|_{\Omega_1\setminus S_1} \oplus 0,
$$
$$
v_2(h) = \tilde{v}_2\left(\left.\frac{h}{\mathbb{I}_2}\right|_{S_2}\right) \oplus 0 \oplus h|_{\Omega_2\setminus S_2}.
$$
\end{proof}

We do not know if the technical condition that $\nu_0$ is $\sigma$-finite can be removed from Lemma~\ref{L1amalgamation}.

In the remaining of the section, we will focus on the push-out construction for Banach lattices. Before introducing this, we need to recall the following tool: Given a Banach space $E$, the \emph{free Banach lattice generated by $E$} is a Banach lattice $FBL[E]$ together with a linear isometric embedding $\delta:E\rightarrow FBL[E]$ with the property that for every Banach lattice $X$ and every operator $T:E\rightarrow X$, there is a unique lattice homomorphism $\hat T:FBL[E]\rightarrow X$ such that $T=\hat T\circ \delta$, and moreover $\|\hat T\|=\|T\|$. This notion was introduced in \cite{ART}, extending an earlier construction of free Banach lattices over a set of generators given in \cite{dePW}.

In categorical terms, for objects $A_0,A_1,A_2$ and morphisms $\alpha_i:A_0\rightarrow A_i$, $i=1,2$, a push-out diagram is an object $PO=PO(\alpha_1,\alpha_2)$ together with morphisms $\beta_i:A_i\rightarrow PO$, $i=1,2$, making commutative the diagram
\begin{center}
\begin{tikzcd}A_1\arrow[r, "\beta_1"]& PO \\A_0\arrow[r, "\alpha_2"]\arrow[u,"\alpha_1"]& A_2\arrow[u, "\beta_2"]\end{tikzcd}
\end{center}
and with the universal property that if $\beta'_i:A_i\rightarrow B$ are such that $\beta'_1\alpha_1=\beta'_2\alpha_2$, then there is a unique $\gamma:PO\rightarrow B$ such that $\gamma\beta_i=\beta'_i$ for $i=1,2,$ as the following diagram illustrates:
\begin{center}
\begin{tikzcd}&&B\\ A_1\arrow[r, "\beta_1"]\arrow[bend left, rru, "\beta'_1"]& PO\arrow[ru, "\gamma"] &\\A_0\arrow[r, "\alpha_2"]\arrow[u,"\alpha_1"]& A_2\arrow[u, "\beta_2"]\arrow[bend right, uur, "\beta'_2"]&\end{tikzcd}
\end{center}

In the case of Banach lattices, the above definition may be called an isomorphic push-out. The definition of the isometric push-out adds the extra condition that $\max\{\|\beta_1\|,\|\beta_2\|\}\leq 1$, and in the universal property we have that $\|\gamma\|\leq \max\{ \|\beta'_1\|,\|\beta'_2\| \}$. It is an easy exercise, using the universal property, that the isomorphic push-out is uniquely determined up to lattice isomorphism, while the isometric push-out is uniquely determined up to lattice isometry. In the sequel, we will always consider isometric push-outs.

Let us see how to make the push-out construction in the category $\mathcal{BL}$: Given Banach lattices $X_0,X_1,X_2$ and lattice homomorphisms $T_i:X_0\rightarrow X_i$ for $i=1,2$, let $X_1\oplus_1 X_2$ denote the direct sum equipped with the norm $\|(x_1,x_2)\|=\|x_1\|+\|x_2\|$ and let $j_i:X_i\rightarrow X_1\oplus_1 X_2$ denote the canonical embedding for $i=1,2$. Let $Z$ be the (closed) ideal in $FBL[X_1\oplus_1 X_2]$ generated by the families $(\delta(j_1 |x|)-|\delta(j_1 x)|)_{x\in X_1}$, $(\delta(j_2 |y|)-|\delta(j_2 y)|)_{y\in X_2}$, $(\delta j_1T_1z-\delta j_2T_2z)_{z\in X_0}$. Let
$$
PO=FBL[X_1\oplus_1 X_2]/Z,
$$
and let $S_i=q\delta j_i:X_i\rightarrow PO$, for $i=1,2$, where $q:FBL[X_1\oplus_1 X_2]\rightarrow PO$ denotes the canonical quotient map.

\begin{theorem}\label{push-out}
Given Banach lattices $X_0,X_1,X_2$ and lattice homomorphisms $T_i:X_0\rightarrow X_i$ for $i=1,2$, with the notation given above, the following is an isometric push-out diagram in the category $\mathcal{BL}$:
\begin{center}
\begin{tikzcd}X_1\arrow[r, "S_1"]& PO \\X_0\arrow[r, "T_2"]\arrow[u,"T_1"]& X_2\arrow[u, "S_2"]\end{tikzcd}
\end{center}
\end{theorem}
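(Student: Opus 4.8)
The plan is to verify directly the three conditions defining an isometric push-out in $\mathcal{BL}$: that the square commutes, that the maps $S_i$ are lattice homomorphisms of norm at most $1$, and that the universal property holds with the correct norm estimate. Throughout I abbreviate $E=X_1\oplus_1 X_2$, and I use repeatedly the two structural features of the construction: $q$ is a lattice homomorphism (the quotient of a Banach lattice by a closed ideal), so it commutes with $|\cdot|$; and $\delta$ is linear, so $\delta(x_1,x_2)=\delta j_1x_1+\delta j_2x_2$. I will read $Z$ as the \emph{closed} ideal generated by the three families, which is needed for $PO$ to be a Banach lattice; this causes no loss, since every kernel appearing below is automatically a closed ideal.

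First I would treat the commutativity and the homomorphism property, both of which are exactly what the generators of $Z$ are designed to produce. Since $q$ commutes with $|\cdot|$, for $x\in X_1$ we have $|S_1x|=|q\delta j_1x|=q|\delta j_1x|$, so $S_1(|x|)=|S_1(x)|$ is equivalent to $\delta(j_1|x|)-|\delta(j_1x)|\in Z$, which holds by the first generating family; hence $S_1$ preserves absolute values and is therefore a lattice homomorphism, and likewise for $S_2$. Commutativity $S_1T_1=S_2T_2$ is equivalent to $\delta j_1T_1z-\delta j_2T_2z\in Z$ for all $z\in X_0$, which is the third family. The norm bound $\max\{\|S_1\|,\|S_2\|\}\le 1$ follows at once from $\|q\|\le1$, $\|\delta\|=1$ and the fact that each $j_i$ is isometric into the $\ell_1$-sum.

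The heart of the argument is the universal property. Given a Banach lattice $B$ and lattice homomorphisms $\beta_i':X_i\To B$ with $\beta_1'T_1=\beta_2'T_2$, I would first assemble the single operator $\beta:E\To B$, $\beta(x_1,x_2)=\beta_1'(x_1)+\beta_2'(x_2)$; the $\ell_1$-norm on $E$ gives $\|\beta\|\le\max\{\|\beta_1'\|,\|\beta_2'\|\}$. By the universal property of $FBL[E]$ there is a unique lattice homomorphism $\hat\beta:FBL[E]\To B$ with $\hat\beta\delta=\beta$ and $\|\hat\beta\|=\|\beta\|$; in particular $\hat\beta\delta j_i=\beta_i'$. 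Now I would check that $\hat\beta$ annihilates the three generating families: using that $\hat\beta$ and each $\beta_i'$ commute with $|\cdot|$ one gets $\hat\beta(\delta(j_1|x|)-|\delta(j_1x)|)=\beta_1'(|x|)-|\beta_1'(x)|=0$ (and similarly for the second family), while the third family is killed by the hypothesis $\beta_1'T_1=\beta_2'T_2$. Since $\ker\hat\beta$ is a closed ideal containing the generators, $Z\subseteq\ker\hat\beta$, so $\hat\beta$ factors as $\hat\beta=\gamma q$ for a lattice homomorphism $\gamma:PO\To B$. Then $\gamma S_i=\gamma q\delta j_i=\hat\beta\delta j_i=\beta_i'$, and the quotient estimate $\|\gamma(q(w))\|=\|\hat\beta(w)\|\le\|\hat\beta\|\inf_{z\in Z}\|w-z\|=\|\hat\beta\|\,\|q(w)\|$ yields $\|\gamma\|\le\|\hat\beta\|\le\max\{\|\beta_1'\|,\|\beta_2'\|\}$.

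Finally I would settle uniqueness of $\gamma$. Because $\delta(E)$ generates $FBL[E]$ as a closed sublattice and $\delta(x_1,x_2)=\delta j_1x_1+\delta j_2x_2$, the set $S_1(X_1)\cup S_2(X_2)=q\delta j_1(X_1)\cup q\delta j_2(X_2)$ generates $PO$ as a closed sublattice; since two lattice homomorphisms out of $PO$ that agree on a generating set agree everywhere, the relations $\gamma S_i=\beta_i'$ determine $\gamma$ uniquely. I do not expect a serious obstacle: the construction is tailored so that each clause of the push-out definition corresponds to one feature of $Z$ or of the free lattice. The only points demanding genuine care are that $\delta j_i$ is \emph{a priori} only linear, not a lattice homomorphism, so the identities $S_i(|x|)=|S_i(x)|$ really do rest on the first two families of generators, and that the uniqueness of $\gamma$ really does rest on knowing that $\delta(E)$ lattice-generates $FBL[E]$.
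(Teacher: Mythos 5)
Your proof is correct and follows essentially the same route as the paper's: lift the pair $(\beta_1',\beta_2')$ to a lattice homomorphism on $FBL[X_1\oplus_1 X_2]$ via the universal property of the free Banach lattice, observe that this lift annihilates the three generating families so that it factors through the quotient with the right norm bound, and prove uniqueness from the fact that $q\delta j_1(X_1)\cup q\delta j_2(X_2)$ lattice-generates $PO$ (equivalently, that the sublattice generated by the $\delta j_i(X_i)$ is dense in $FBL[X_1\oplus_1 X_2]$). Your explicit remark that $Z$ should be read as the \emph{closed} ideal generated by those families, so that $PO$ is genuinely a Banach lattice, is a point the paper leaves implicit, but it is a clarification rather than a different argument.
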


\begin{proof}
First note for $i=1,2$, since $\delta(j_i |x|)-|\delta(j_i x)|\in Z$ for every $x\in X_i$, and $q$ is a lattice homomorphism, we have that
$$
S_i(|x|)=q\delta j_i(|x|)=q(|\delta j_i(x)|)=|S_i(x)|,
$$
for $x\in X_i$. Similarly, commutativity of the diagram follows from the fact that $\delta j_1T_1x-\delta j_2T_2x\in Z$ for every $x\in X_0$. Notice also that  $\max\{\|S_1\|,\|S_2\|\}\leq 1$ because all components in the the definition $S_i=q\delta j_i$ are of norm bounded by 1.

It remains to check the universal property. To this end, let $Y$ be a Banach lattice and for $i=1,2$, let $R_i:X_i\rightarrow Y$ be lattice homomorphisms such that $R_1T_1=R_2T_2$. Let $R:X_1\oplus_1 X_2\rightarrow Y$ be given by $R(x_1,x_2)=R_1x_1+R_2x_2$. By the properties of the free Banach lattice, we can consider $\hat R:FBL[X_1\oplus_1 X_2]\rightarrow Y$ to be the unique lattice homomorphism such that $\hat R\delta=R$. Note that $\|\hat R\|\leq \max\{\|R_1\|,\|R_2\|\}$. Moreover, we have that
$$
\hat R (\delta(j_i|x|)-|\delta(j_i x)|)=R_i|x|-|R_ix|=0,
$$
for $x\in X_i$ and $i=1,2$. Similarly, for $z\in X_0$ we have
$$
\hat R (\delta j_1 T_1z-\delta j_2T_2z)=R_1T_1z-R_2T_2z=0.
$$
Therefore, it follows that $Z\subset ker \tilde R$. Hence, there is a lattice homomorphism $\tilde R:PO\rightarrow Y$ such that $\hat R=\tilde R q$ with $\|\tilde R\|\leq \max\{\|R_1\|,\|R_2\|\}$. Note that, for $i=1,2$ and every $x\in X_i$ we have
$$
\tilde R S_i x=\tilde R q\delta j_i x=\hat R \delta j_i x=R j_i x=R_i x.
$$
Finally, suppose $T:PO\rightarrow Y$ is a lattice homomorphism satisfying $TS_i=R_i$ for $i=1,2$. We claim that $T=\tilde R$. Indeed, since $q$ is a quotient map, this is equivalent to $Tq=\tilde R q=\hat R$. Now, for $i=1,2$ and $x\in X_i$ we have
$$
Tq \delta j_i x=TS_i x=R_i x=\hat R \delta j_i x.
$$
Since the sublattice generated by elements of the form $(\delta j_1 x)_{x\in X_1}$ and $(\delta j_2 x)_{x\in X_2}$ is dense in $FBL[X_1\oplus X_2]$, this proves the claim.
\end{proof}

As a remark, the same idea of combining the free construction and quotients gives not only push-outs but also more general colimits of norm-bounded diagrams in the category $\mathcal{BL}$.

\begin{theorem}\label{parallelisometries}
 Let
\begin{center}
\begin{tikzcd}X_1\arrow[r, "\tilde{T}_1"]& PO \\X_0\arrow[r, "T_2"]\arrow[u,"T_1"]& X_2\arrow[u, "\tilde{T}_2"]\end{tikzcd}
\end{center}be an isometric push-out diagram in the category $\mathcal{BL}$. If the lower arrow $T_2$ is an isometry, then the upper arrow $\tilde{T}_1$ is also an isometry.
\end{theorem}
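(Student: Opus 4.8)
The plan is to exploit the universal property of the push-out to reduce the problem to constructing a single contractive ``test'' homomorphism into an $L_1$-lattice, and then to build that homomorphism by transporting a norming functional across $T_2$ and amalgamating. Since the diagram is an \emph{isometric} push-out we already have $\|\tilde T_1\|\le 1$, so it suffices to prove the reverse estimate $\|\tilde T_1 x\|\ge\|x\|$; and because $\tilde T_1$ is a lattice homomorphism, $|\tilde T_1 x|=\tilde T_1|x|$, so we may assume $x\ge 0$ with $\|x\|=1$. Concretely, I would aim to produce a Banach lattice $W$ together with lattice homomorphisms $R_1:X_1\To W$ and $R_2:X_2\To W$ of norm at most $1$ satisfying $R_1T_1=R_2T_2$ and $\|R_1x\|=1$. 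Feeding $R_1,R_2$ into the universal property of the isometric push-out then yields a lattice homomorphism $\gamma:PO\To W$ with $\|\gamma\|\le 1$ and $\gamma\tilde T_1=R_1$, whence $1=\|R_1x\|=\|\gamma\tilde T_1 x\|\le\|\tilde T_1 x\|$, as required.

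To manufacture $R_1$ I would start from the norming machinery of Section~\ref{s:homoL1}. By Lemma~\ref{sigmafinitenorming} there is a $\sigma$-finite positive $x^\ast\in X_1^\ast$ with $x^\ast(x)=1=\|x^\ast\|$ and with $\nu_{x^\ast}$ a probability; the associated homomorphism $\psi_{x^\ast}:X_1\To L_1(\nu_{x^\ast})$ then satisfies $\|\psi_{x^\ast}x\|=x^\ast(|x|)=x^\ast(x)=1$. Put $z^\ast:=T_1^\ast x^\ast\in X_0^\ast$, a positive functional. Since $\|\psi_{x^\ast}T_1 z\|=x^\ast(|T_1 z|)=z^\ast(|z|)=\|z\|_{z^\ast}$, the homomorphism $\psi_{x^\ast}T_1$ realizes exactly the seminorm attached to $z^\ast$, and hence factors as $\psi_{x^\ast}T_1=u_1\psi_{z^\ast}$ for a unique isometric lattice embedding $u_1:L_1(\nu_{z^\ast})\To L_1(\nu_{x^\ast})$. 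As $L_1(\nu_{z^\ast})$ thereby sits as a sublattice of the ccc lattice $L_1(\nu_{x^\ast})$, it is itself ccc, so $z^\ast$ is $\sigma$-finite by Lemma~\ref{L1sigmafinito}.

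The heart of the argument is to carry $z^\ast$ across $T_2$. Using that $T_2$ is an isometric lattice embedding, the sublinear functional $p(w)=\|z^\ast\|\,\|w^+\|$ on $X_2$ dominates the copy of $z^\ast$ living on the sublattice $T_2(X_0)$, so Hahn--Banach furnishes a positive extension $w^\ast\in X_2^\ast$ with $w^\ast\circ T_2=z^\ast$. The same seminorm computation as above gives $\|\psi_{w^\ast}T_2 z\|=w^\ast(|T_2 z|)=z^\ast(|z|)=\|z\|_{z^\ast}$, so $\psi_{w^\ast}T_2=u_2\psi_{z^\ast}$ for an isometric lattice embedding $u_2:L_1(\nu_{z^\ast})\To L_1(\nu_{w^\ast})$. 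Now I would invoke the $L_1$-amalgamation Lemma~\ref{L1amalgamation}, legitimate because the common lattice $L_1(\nu_{z^\ast})$ is $\sigma$-finite: it produces a measure $\nu_3$ and isometric lattice embeddings $v_1:L_1(\nu_{x^\ast})\To L_1(\nu_3)$ and $v_2:L_1(\nu_{w^\ast})\To L_1(\nu_3)$ with $v_1u_1=v_2u_2$. Setting $W=L_1(\nu_3)$, $R_1=v_1\psi_{x^\ast}$ and $R_2=v_2\psi_{w^\ast}$ gives contractive lattice homomorphisms with $R_1T_1=v_1u_1\psi_{z^\ast}=v_2u_2\psi_{z^\ast}=R_2T_2$ and $\|R_1 x\|=\|\psi_{x^\ast}x\|=1$, which is exactly the data demanded in the first paragraph.

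I expect the main obstacle to be the transport step of the third paragraph, and it is precisely there that the isometry hypothesis on $T_2$ is indispensable: the majorant $p$ controls $z^\ast$ on $T_2(X_0)$ only because $\|T_2(z^+)\|=\|z^+\|$, and a merely contractive $T_2$ would break this estimate, so an extension $w^\ast$ with $w^\ast\circ T_2=z^\ast$ could fail to exist. A secondary technical point, needed to legitimately apply Lemma~\ref{L1amalgamation}, is the $\sigma$-finiteness of the amalgamating lattice $L_1(\nu_{z^\ast})$, which is why realizing it as a sublattice of the ccc lattice $L_1(\nu_{x^\ast})$ matters. The remaining verifications --- that the $u_i$ are well defined isometric embeddings and that the factorizations and norm bounds hold --- are routine once these two points are in place.
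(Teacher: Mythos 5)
Your proof is correct and takes essentially the same route as the paper's: reduce via the push-out universal property to constructing a commuting contractive square into an $L_1$-lattice, norm the positive element by a $\sigma$-finite functional (Lemma~\ref{sigmafinitenorming}), pull it back along $T_1$, transport it across the isometry $T_2$ by positive Hahn--Banach, factor the resulting homomorphisms through $L_1(\nu_{z^\ast})$ as isometric embeddings, check $\sigma$-finiteness via the ccc transfer, and amalgamate with Lemma~\ref{L1amalgamation}. The only cosmetic difference is that you prove the needed positive Hahn--Banach extension inline via the sublinear functional $p(w)=\|z^\ast\|\,\|w^+\|$, where the paper simply cites \cite[Proposition 1.5.7]{M-N}.
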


\begin{proof}
	We fix $x_1\in X_1$ and we want to prove that $\|x_1\| \leq \|\tilde{T}_1(x_1)\|$. The reverse inequality is trivially true since $\|\tilde{T}_1\|\leq 1$.  Since these are all lattice norms and lattice homomorphisms that preserve the absolute value, we can suppose that $x_1$ is a positive element. By the push-out universal property, as given in Theorem~\ref{push-out}, it is enough to find a commutative diagram in $\mathcal{BL}$
	\begin{center}
		\begin{tikzcd}X_1\arrow[r, "\hat{T}_1"]& Z \\X_0\arrow[r, "T_2"]\arrow[u,"T_1"]& X_2\arrow[u, "\hat{T}_2"]\end{tikzcd}
	\end{center}
such that $\|\hat{T}_1(x_1)\| = \|x_1\|$ and $\max\{ \|\hat{T}_1\|,\|\hat{T}_2\| \} \leq 1$. By Lemma~\ref{sigmafinitenorming}, we can pick a positive $\sigma$-finite element $x_1^\ast\in X_1^\ast$ of norm one such that $|x_1^\ast(x_1)| = \|x_1\|$. We let $x_0^\ast = x_1^\ast\circ T_1$. Since $T_2$ is an isometric embedding of Banach lattices, by the positive version of Hahn-Banach (cf. \cite[Proposition 1.5.7]{M-N}), we can find a positive $x_2^\ast\in X_2^\ast$ such that $x_2^\ast \circ T_2 = x_0^\ast$ and $\|x_2^\ast\| = \|x_0^\ast\| \leq \|x_1^\ast\| = 1$. Now, for every $i=0,1,2$ we can consider the Banach lattice $L_1(\nu_{x_i^\ast})$ that comes with a canonical morphism induced by the formal identity  $\psi_{x_i^\ast}: X_i \To L_1(\nu_{x_i^\ast})$.
 For $i=1,2$, we also have naturally induced $\ddot{T}_i:L_1(\nu_{x_0^\ast})\To L_1(\nu_{x_i^\ast})$ that are in fact isometric lattice embeddings because for any $y\in \psi_{x_i^\ast}(X_0)$ we have
$$\|\ddot{T}_i(y)\|_{x_i^\ast} = x^\ast_i(|T_iy|) = x^\ast_i(T_i|y|) = x_0^\ast(y) = \|y\|_{x_0^\ast}.$$

So we have a commutative diagram

\begin{center}
	\begin{tikzcd}
		& X_1\arrow[r, "\psi_{x_1^\ast}"] & L_1(\nu_{x_1^\ast})  \\
		X_0\arrow[r, "\psi_{x_0^\ast}"]\arrow[ur,"T_1"]\arrow[dr,"T_2"] & L_1(\nu_{x_0^\ast})\arrow[ur,"\ddot{T}_1"]\arrow[dr,"\ddot{T}_2"] \\
		& X_2\arrow[r, "\psi_{x_2^\ast}"] & L_1(\nu_{x_2^\ast})
	\end{tikzcd}
\end{center}
where, moreover, $\|\psi_{x_1^\ast}(x_1)\|_{x_1^\ast} = x^\ast_1(|x_1|) = \|x_1\|$. Keep in mind Lemma~\ref{L1sigmafinito}. Since we chose $x_1^\ast$ to be $\sigma$-finite, $L_1(\nu_{x_1^\ast})$ is ccc, therefore $L_1(\nu_{x_0}^\ast)$ is ccc as well, because $\ddot{T}_1$ is a lattice embedding. By Lemma~\ref{L1amalgamation}, we can close the diagram with isometric lattice embeddings $v_1,v_2$:
\begin{center}
	\begin{tikzcd}
	& X_1\arrow[r, "\psi_{x_1^\ast}"] & L_1(\nu_{x_1^\ast})\arrow[dr,"v_1"] \\
	X_0\arrow[r, "\psi_{x_0^\ast}"]\arrow[ur,"T_1"]\arrow[dr,"T_2"] &  L_1(\nu_{x_0^\ast})\arrow[ur,"\ddot{T}_1"]\arrow[dr,"\ddot{T}_2"] &  & L_1(\nu_3)\\
	& X_2\arrow[r, "\psi_{x_2^\ast}"] & L_1(\nu_{x_2^\ast})\arrow[ur,"v_2"]
	\end{tikzcd}
\end{center}
This is the way to close the square that we were looking for.
\end{proof}

\begin{corollary}[Amalgamation of Banach lattices] If $T_1:X_0\To X_1$ and $T_2:X_0\To X_2$ are isometric lattice embeddings between Banach lattices, then there exists a Banach lattice $X_3$ and isometric lattice embeddings $S_1:X_1\To X_3$ and $S_2:X_2\To X_3$ with $S_1\circ T_1 = S_2\circ T_2$.
\end{corollary}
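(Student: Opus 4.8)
The plan is to read off the amalgam directly from the two structural results already established: the existence of isometric push-outs (Theorem~\ref{push-out}) and the transfer of isometry along the parallel arrow (Theorem~\ref{parallelisometries}). No further analytic work should be needed, since all the hard content---the reduction to $L_1$ lattices and the measure-theoretic amalgamation via Kellerer---has already been absorbed into Theorem~\ref{parallelisometries}.

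First I would form the isometric push-out $X_3 := PO(T_1,T_2)$ of the two homomorphisms $T_1,T_2$, as constructed in Theorem~\ref{push-out}. This supplies lattice homomorphisms $S_1\colon X_1\To X_3$ and $S_2\colon X_2\To X_3$ with $S_1\circ T_1 = S_2\circ T_2$ and $\max\{\|S_1\|,\|S_2\|\}\leq 1$. Commutativity and the bound on the norms are exactly the conclusions recorded in Theorem~\ref{push-out}, so it only remains to upgrade $S_1$ and $S_2$ from norm-decreasing homomorphisms to genuine isometric embeddings.

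For this I would invoke Theorem~\ref{parallelisometries} twice. Reading the square with $T_2\colon X_0\To X_2$ as the lower arrow and $S_1\colon X_1\To X_3$ as the upper one, the hypothesis that $T_2$ is an isometry gives at once that $S_1$ is an isometry. To handle $S_2$ I would exploit the symmetry of the push-out construction: $PO(T_1,T_2)$ and $PO(T_2,T_1)$ are the same object with the roles of $X_1$ and $X_2$ interchanged, so the same diagram may equally be read with $T_1\colon X_0\To X_1$ as its lower arrow and $S_2\colon X_2\To X_3$ as the corresponding upper arrow. Since $T_1$ is an isometry by hypothesis, Theorem~\ref{parallelisometries} applied in this orientation yields that $S_2$ is an isometry as well. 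Both $S_1$ and $S_2$ are then isometric lattice embeddings satisfying $S_1\circ T_1 = S_2\circ T_2$, which is precisely the asserted amalgamation.

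The only point requiring any care is the bookkeeping in the second application: one must be explicit that the push-out is symmetric in its two defining morphisms, so that Theorem~\ref{parallelisometries}, stated for a single distinguished ``lower'' isometry, can be re-applied with the two arrows exchanged. Beyond this elementary observation I expect no obstacle, the genuinely difficult step having already been settled in Theorem~\ref{parallelisometries}.
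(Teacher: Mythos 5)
Your proposal is correct and is essentially identical to the paper's own proof: form the isometric push-out of $T_1,T_2$ via Theorem~\ref{push-out}, then apply Theorem~\ref{parallelisometries} to conclude $S_1$ is an isometry, and by the symmetry of the push-out in its two defining morphisms conclude the same for $S_2$. The symmetry bookkeeping you flag is exactly the point the paper disposes of with the phrase ``but by symmetry also from the left to the right arrow.''
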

\begin{proof}
	The isometric property is transferred from the lower to the upper arrow of the push-out, but by symmetry also from the left to the right arrow.
\end{proof}

\section{Banach lattices of universal disposition}\label{s:universaldisposition}

In this section, embedding will always mean isometric lattice embedding.

\begin{definition}\label{defunivdispo}
	Let $\mathcal{C}$ be a class of Banach lattices. We say that $X$ is of universal disposition for $\mathcal{C}$ if for every embeddings $f:R\To S$ and $g:R\To X$ with $S\in\mathcal{C}$, there exists an embedding $h:S\To X$ with $h\circ f = g$.
\end{definition}

The terminology is taken from \cite{sepiny}, where the analogous notion in the category of Banach spaces is considered. The concept is commonly considered in other contexts under different names, cf. for instance \cite{Kostana}. It is related to the notion of saturation in model theory. Fra\"{i}ss\'{e} limits correspond to countable structures that are of universal disposition for their finite substructures. Amalgamation is the key property to prove the existence of such objects. There are also uniqueness results based on back-and-forth arguments, but they typically require the Continuum Hypothesis when talking about universal disposition for countably generated substructures. Parovichenko's theorem states that, under CH, $\mathcal{P}(\mathbb{N})/fin$, the quotient algebra of the power set of $\mathbb{N}$ modulo the ideal of finite sets, is the unique Boolean algebra of size $\mathfrak{c}$ of universal disposition for countable subalgebras. There is a Banach space of density $\mathfrak{c}$ of universal disposition for separable spaces, and it is unique under CH \cite{sepiny}. 

Once we have amalgamation at hand, we will prove similar results in the category of Banach lattices (Theorem~\ref{BLuniversaldisp}), and in the category of $L_1$ Banach lattices (Theorem~\ref{L1disposition}).

\begin{theorem}\label{L1disposition}
	The Banach lattice $\ell_1(\mathfrak{c},L_1([0,1]^\mathfrak{c}))$ is of universal disposition for the class of separable Banach lattices of the form $L_1(\nu)$.
\end{theorem}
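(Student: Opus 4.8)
The plan is to reduce everything to separable $L_1$-spaces, to build the required extension as an $L_1$-amalgam, and then to re-embed that amalgam into $X:=\ell_1(\mathfrak c,L_1([0,1]^\mathfrak c))$ using the abundance of unused coordinates.

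First I would observe that the base $R$ is automatically an $L_1$-space. Indeed, since $f\colon R\to S=L_1(\nu)$ is an isometric lattice embedding, for positive $x,y\in R$ we have $\|x+y\|=\|f(x)+f(y)\|_{L_1}=\int(f(x)+f(y))\,d\nu=\|x\|+\|y\|$, so $R$ is an AL-space and hence lattice isometric to some $L_1(\nu_0)$ by Kakutani's theorem \cite[Theorem 1.b.2]{LT2}. As $R$ embeds into the separable $S$ it is separable, hence ccc, so by Lemma~\ref{L1sigmafinito} we may take $\nu_0$ to be a probability measure; in particular $\nu_0$ is $\sigma$-finite.

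Next I would localize the image of $g$. Every element of $X=\ell_1(\mathfrak c,L_1([0,1]^\mathfrak c))$ is supported on countably many of the $\mathfrak c$ summands and, within each summand, depends on countably many cube-coordinates, so since $g(R)$ is separable there are countable sets $J\subset\mathfrak c$ and $C\subset\mathfrak c$ with $g(R)\subset V:=\ell_1(J,L_1([0,1]^C))$, and $V$ is a \emph{separable} $L_1$-sublattice of $X$. Corestricting, $g\colon R\to V$ and $f\colon R\to S$ are isometric lattice embeddings between probability $L_1$-spaces, so by Lemma~\ref{L1amalgamation} (applicable because the base $\nu_0$ is $\sigma$-finite) there are a positive measure $\nu_3$ and isometric lattice embeddings $v_1\colon S\to L_1(\nu_3)=:W$ and $v_2\colon V\to W$ with $v_1f=v_2g$; inspecting the construction, which is built from the measure amalgam of Theorem~\ref{measureamalgamation} on a product of separable spaces together with $\ell_1$-correction terms coming from sublattices of $S$ and $V$, one sees that $W$ can be taken separable. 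At this point the whole statement reduces to producing an isometric lattice embedding $\Phi\colon W\to X$ with $\Phi\circ v_2=\mathrm{incl}_V$: granting this, $h:=\Phi\circ v_1\colon S\to X$ is an isometric lattice embedding and $hf=\Phi v_1 f=\Phi v_2 g=g$, as required.

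The main obstacle is this last step, realizing the separable $L_1$-amalgam $W$ inside $X$ over $V$. I would represent $v_2$ by a measure-preserving map via Iwanik's theorem \cite{Iwanik}, so that $W$ is $L_1$ of a probability space fibered over the space of $V$, and then exploit the $\mathfrak c$ unused summands of $X$ together with the $\mathfrak c\setminus C$ unused cube-coordinates in the summands indexed by $J$. Using Maharam's classification (Theorem~\ref{Maharamclassfication}) and Lemma~\ref{Maharamlemma}, I would enlarge the copy of $V$ sitting in $X$ to a copy of $W$; the delicate point is to arrange that the isomorphism realizing this copy restricts to the identity on $V$, that is, that the inclusion $V\subset X$ is compatible with $v_2$. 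This amounts to a measurable-section statement: one must build a measure-preserving map from the measure space of $X$ to that of $W$ splitting the projection onto the space of $V$, which is exactly the back-amalgamation of measure-preserving maps provided by Theorem~\ref{measureamalgamation}, with the fresh coordinates of $X$ supplying enough room to carry the separable fiber of $W$ over $V$. Verifying that this can be carried out while fixing $V$ — a homogeneity property of $X$ stemming from the richness of $[0,1]^\mathfrak c$ — is where the real work lies.
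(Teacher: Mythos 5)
Your preparatory reductions are correct and run parallel to ingredients the paper also uses: $R$ is a separable AL-space by Kakutani, $g(R)$ localizes inside a separable sublattice $V=\ell_1(J,L_1([0,1]^C))$ with $J,C$ countable (the same observation that drives Lemma~\ref{Maharamlemma}), and Lemma~\ref{L1amalgamation} yields a separable amalgam $W$ with $v_1f=v_2g$, so that everything reduces to finding $\Phi\colon W\to X$ with $\Phi\circ v_2=\mathrm{incl}_V$. But this last step is not a technical remainder: it \emph{is} the theorem, namely universal disposition of $X$ applied to the pair of embeddings $v_2\colon V\to W$ and $\mathrm{incl}\colon V\to X$, and you leave it unproved, conceding it is ``where the real work lies.'' Moreover, the tool you point to cannot supply it. Theorem~\ref{measureamalgamation} constructs a \emph{new} measure space $\Omega_3$ together with measure-preserving maps defined \emph{on} $\Omega_3$ into the two given spaces; it never produces a map defined on a \emph{prescribed} space such as the measure space underlying $X$. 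What you need is a measure-preserving lifting $\psi$ of the coordinate projection $\pi\colon\Omega_X'\to\Omega_V$ through $\phi\colon\Omega_W\to\Omega_V$, i.e.\ $\phi\circ\psi=\pi$ with $\psi$ defined on the given $\Omega_X'$. That is a genuinely different statement (essentially a Rokhlin-type disintegration/measurable-section argument exploiting the homogeneity of $[0,1]^{\mathfrak c}$), and Lemma~\ref{Maharamlemma} does not give it either: it can place $V$ inside a copy of any prescribed large $L_1(\nu)$, but gives no control over how the isomorphism realizing that copy acts on $V$ --- exactly the ``delicate point'' you flag and do not resolve.

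The paper sidesteps this matching problem by a two-stage argument in which amalgamation is \emph{iterated} rather than applied once. First, a transfinite chain of length $\omega_1$, with a bookkeeping enumeration of all diagrams of embeddings of separable $L_1$ lattices, uses Lemma~\ref{L1amalgamation} at each successor step to build an abstract $L_1(\mu)$ of density $\mathfrak{c}$ which is of universal disposition for separable $L_1$ lattices; saturation through iteration is what removes any need to match a pre-built amalgam against a pre-existing copy. Second, Maharam's classification (Theorem~\ref{Maharamclassfication}) together with Lemma~\ref{Maharamlemma} shows that every separable sublattice of $\ell_1(\mathfrak{c},L_1([0,1]^{\mathfrak c}))$ is contained in a lattice isometric copy $Y$ of this $L_1(\mu)$ sitting inside $\ell_1(\mathfrak{c},L_1([0,1]^{\mathfrak c}))$; one then applies universal disposition of $Y$ directly to $g\colon R\to Y$ and composes with the inclusion $Y\subset X$, so no compatibility condition on $V$ ever arises. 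To rescue your single-step approach you would have to prove the lifting statement above (disintegrate $\nu_W$ over $\nu_V$ and realize each fiber, a standard probability space, as a measure-preserving image of a factor of $[0,1]^{\mathfrak c\setminus C}$, measurably in the base point, absorbing the part of $W$ lying over nothing into the unused summands of $X$); that is a substantive additional argument, not a citation of Theorem~\ref{measureamalgamation}.
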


\begin{proof}
	We are not going to work directly on $\ell_1(\mathfrak{c},L_1([0,1]^\mathfrak{c}))$. Instead, we will first construct an abstract Banach lattice $L_1(\mu)$ of universal disposition of density $\mathfrak{c}$, and later we shall see that if there are such things, then $\ell_1(\mathfrak{c},L_1([0,1]^\mathfrak{c}))$ must be one of them. We will make the construction in similar way as in \cite{ABrech}. First recall Kakutani's theorem \cite[Theorem 1.b.2]{LT2} stating that a Banach lattice $X$ is lattice isometric to some $L_1(\nu)$ if and only if
\begin{equation}\label{kakutani}
\forall x,y\in X \ \ \||x|+|y|\| = \|x\| + \|y\|.\tag{$\natural$}
\end{equation}
	
We will construct a continuous increasing chain of Banach lattices $$\{X_\alpha \simeq L_1(\mu_\alpha) : \alpha\leq \omega_1\}$$ by transfinite induction. The measure that we are looking for will be $\mu= \mu_{\omega_1}$. At a limit step $\alpha$, the lattice $X_\alpha$ will be the completion of the increasing union $\bigcup_{\beta<\alpha}X_\beta$. Property \eqref{kakutani} is clearly preserved under such a procedure and this guarantees that $X_\alpha$ will be of the form $L_1(\mu_\alpha)$ if all previous $X_\beta$ were of the form $L_1(\mu_\beta)$. Now we describe the successor step. We fix a partition of $\mathfrak{c}$ into $\mathfrak{c}$ many pairwise disjoint pieces of size $\mathfrak{c}$, $$\mathfrak{c} = \bigcup_{\alpha<\mathfrak{c}}C_\alpha$$
with the additional property that, for all $\alpha<\mathfrak{c}$, $C_\alpha\cap\alpha = \emptyset$. Once some $X_\gamma$ has been constructed we enumerate all (up to lattice isometry) diagrams of embeddings of the form
\begin{center}
	\begin{tikzcd}
	 L_1(\mu_\gamma) & \\
L_1(\nu)\arrow[r]\arrow[u] & L_1(\tilde{\nu})
	\end{tikzcd}
\end{center}
with $L_1(\tilde{\nu})$ separable, in the form
\begin{center}
	\begin{tikzcd}
	L_1(\mu_\gamma) & \\
	L_1(\nu_\xi)\arrow[r,"i_\xi"]\arrow[u,"j_\xi"] & L_1(\tilde{\nu}_\xi)
	\end{tikzcd} for $\xi\in C_\gamma.$
	\end{center}
Now, if we have constructed all $X_\gamma$ for $\gamma\leq \alpha$ and we want to define $X_{\alpha+1}$, we pick the $\gamma<\alpha$ such that $\alpha\in C_\gamma$. Then we have a diagram of embeddings
\begin{center}
	\begin{tikzcd}
	L_1(\mu_\alpha) & \\
	L_1(\mu_\gamma)\arrow[u] & \\
	L_1(\nu_\alpha)\arrow[r,"i_\alpha"]\arrow[u,"j_\alpha"] & L_1(\tilde{\nu}_\alpha)
	\end{tikzcd}
\end{center}
that, by Lemma~\ref{L1amalgamation}, we can close as
\begin{center}
	\begin{tikzcd}
	L_1(\mu_\alpha)\arrow[r] & L_1(\mu_{\alpha+1})\\
	L_1(\mu_\gamma)\arrow[u] & \\
	L_1(\nu_\alpha)\arrow[r,"i_\alpha"]\arrow[u,"j_\alpha"] & L_1(\tilde{\nu}_\alpha)\arrow[uu].
	\end{tikzcd}
\end{center}
Of course, we can assume that the upper arrow is formally an inclusion. This finishes the construction. The fact that $L_1(\mu_{\omega_1})$ is of universal disposition is clear. If we have a diagram
\begin{center}
	\begin{tikzcd}
	L_1(\mu_{\omega_1}) & \\
	L_1(\nu)\arrow[r]\arrow[u] & L_1(\tilde{\nu})
	\end{tikzcd}
\end{center}
with $L_1(\tilde{\nu})$ separable, then the range of horizontal arrow, being separable, must be contained inside some $L_1(\mu_\gamma)$ for some $\gamma<\omega_1$. Therefore our diagram must be essentially realized as
\begin{center}
	\begin{tikzcd}
	L_1(\mu_{\omega_1}) & \\
	L_1(\mu_\gamma)\arrow[u] & \\
	L_1(\nu_\alpha)\arrow[r,"i_\alpha"]\arrow[u,"j_\alpha"] & L_1(\tilde{\nu}_\alpha)
	\end{tikzcd}
\end{center}
and then we extend as
\begin{center}
	\begin{tikzcd}
	L_1(\mu_{\omega_1}) & \\
	L_1(\mu_\alpha)\arrow[r]\arrow[u] & L_1(\mu_{\alpha+1})\arrow[lu]\\
	L_1(\mu_\gamma)\arrow[u] & \\
	L_1(\nu_\alpha)\arrow[r,"i_\alpha"]\arrow[u,"j_\alpha"] & L_1(\tilde{\nu}_\alpha)\arrow[uu].
	\end{tikzcd}
\end{center}
This finishes the construction of $L_1(\mu)$ of density $\mathfrak{c}$ and universal disposition for separable $L_1$ lattices. This $L_1(\mu)$ must have a Maharam decomposition as in Theorem~\ref{Maharamclassfication}, where $\Gamma=\emptyset$. If otherwise $\Gamma\neq\emptyset$, the embedding sending the constant functions to functions supported on one $\gamma\in \Gamma$ could not be extended to $L_1[0,1]$. Moreover, because of the density, all cardinals $|I|$, $\kappa_i$, $\tau_i$ related to the decomposition are bounded above by $\mathfrak{c}$. Applying Lemma~\ref{Maharamlemma}, we get that for every separable sublattice $X$ of $\ell_1(\mathfrak{c},L_1([0,1]^\mathfrak{c}))$, there exists $Y\simeq L_1(\mu)$ such that $X\subset Y \subset \ell_1(\mathfrak{c},L_1([0,1]^\mathfrak{c}))$. This transfers the universal disposition property from $L_1(\mu)$ to $\ell_1(\mathfrak{c},L_1([0,1]^\mathfrak{c}))$.
\end{proof}

\begin{theorem}\label{BLuniversaldisp}
	There exists a Banach lattice of density $\mathfrak{c}$ that is of universal disposition for all separable Banach lattices.
\end{theorem}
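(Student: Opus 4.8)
The plan is to adapt the transfinite amalgamation scheme used in the first half of Theorem~\ref{L1disposition}, but replacing the $L_1$-amalgamation of Lemma~\ref{L1amalgamation} by the full amalgamation property for arbitrary Banach lattices (the Corollary following Theorem~\ref{parallelisometries}), and replacing separable $L_1$ spaces by arbitrary separable Banach lattices. Concretely, I would build a continuous increasing chain of Banach lattices $(X_\alpha)_{\alpha\leq\mathfrak{c}}$ connected by isometric lattice embeddings, which, after identifying each $X_\alpha$ with its image, we treat as inclusions. Start with $X_0$ any separable Banach lattice; at a limit ordinal $\alpha$ set $X_\alpha$ to be the completion of $\bigcup_{\beta<\alpha}X_\beta$, which is again a Banach lattice into which every $X_\beta$ embeds isometrically. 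The Banach lattice we seek is $X=X_{\mathfrak{c}}$.

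For the bookkeeping at successor steps I would reuse the partition $\mathfrak{c}=\bigcup_{\gamma<\mathfrak{c}}C_\gamma$ into $\mathfrak{c}$ pieces of size $\mathfrak{c}$ with $C_\gamma\cap\gamma=\emptyset$. Once $X_\gamma$ has been built, enumerate up to isometry all diagrams of embeddings $j:R\To X_\gamma$ and $i:R\To S$ with $R,S$ separable, as $\{(i_\xi,j_\xi)\}_{\xi\in C_\gamma}$. To define $X_{\alpha+1}$, pick the unique $\gamma$ with $\alpha\in C_\gamma$; since $C_\gamma\cap\gamma=\emptyset$ we have $\gamma\leq\alpha$, so $j_\alpha:R\To X_\gamma\hookrightarrow X_\alpha$ together with $i_\alpha:R\To S$ is a span of isometric lattice embeddings, and the amalgamation Corollary provides $X_{\alpha+1}$ together with isometric embeddings $X_\alpha\hookrightarrow X_{\alpha+1}$ and $h:S\To X_{\alpha+1}$ satisfying $h\circ i_\alpha=j_\alpha$ (we may take the left-hand arrow to be a formal inclusion).

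The counting that legitimizes the enumeration is the key technical point controlling the density. By transfinite induction each $X_\alpha$ with $\alpha\leq\mathfrak{c}$ has density $\leq\mathfrak{c}$: a push-out is a quotient of a free Banach lattice over an $\ell_1$-sum, so $\mathrm{dens}(X_{\alpha+1})\leq\mathrm{dens}(X_\alpha)+\mathrm{dens}(S)\leq\mathrm{dens}(X_\alpha)+\aleph_0$, and limits are unions of at most $\mathfrak{c}$ pieces of density $\leq\mathfrak{c}$. Consequently $X_\gamma$ has at most $\mathfrak{c}^{\aleph_0}=\mathfrak{c}$ separable sublattices, and each separable $R$ admits at most $\mathfrak{c}$ separable extensions $i:R\To S$ up to isometry; hence there are at most $\mathfrak{c}$ diagrams to list and $|C_\gamma|=\mathfrak{c}$ suffices. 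In particular $\mathrm{dens}(X)\leq\mathfrak{c}$.

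Finally I would verify universal disposition. Given embeddings $g:R\To X$ and $f:R\To S$ with $S$ separable, the range $g(R)$ is separable, so by the uncountable cofinality of $\mathfrak{c}$ it is contained in some $X_\gamma$, and $g$ factors as $R\To X_\gamma\hookrightarrow X$. Replacing $R$ by $g(R)\subset X_\gamma$, the resulting pair is isometric to an enumerated $(i_\xi,j_\xi)$ with $\xi\in C_\gamma$, which was amalgamated at stage $\xi$; the map $h_\xi:S\To X_{\xi+1}\hookrightarrow X$ produced there, transported along the isometry of diagrams, yields the required $h:S\To X$ with $h\circ f=g$. This shows $X$ is of universal disposition for all separable Banach lattices; its density is $\leq\mathfrak{c}$ by the above, and equal to $\mathfrak{c}$ as in the Banach space setting \cite{sepiny}. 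The point I expect to carry all the genuine difficulty is \emph{not} in this iteration, which is a routine Fra\"iss\'e-type argument, but upstream: that a span of isometric lattice embeddings can be amalgamated at all, which is precisely the content of the Corollary to Theorem~\ref{parallelisometries}; the only care needed here is the counting argument keeping the density bounded by $\mathfrak{c}$.
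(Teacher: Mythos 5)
Your proposal is correct and is essentially the paper's own proof: the paper's argument for this theorem is literally ``repeat the transfinite iteration of Theorem~\ref{L1disposition}, replacing Lemma~\ref{L1amalgamation} by push-outs and Theorem~\ref{parallelisometries}'' (i.e., by the amalgamation corollary), which is exactly what you carry out, with the same bookkeeping partition $\mathfrak{c}=\bigcup_{\gamma<\mathfrak{c}}C_\gamma$ and the same cofinality argument for the verification. If anything, your decision to run the chain up to $\mathfrak{c}$ rather than $\omega_1$ (as written in the proof of Theorem~\ref{L1disposition}) is the more careful choice, since a chain of length $\omega_1$ can only process all $\mathfrak{c}$ enumerated diagrams when the Continuum Hypothesis holds.
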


\begin{proof}
Just repeat the previous proof with arbitrary Banach lattices instead of $L_1(\nu)$'s, and make push-outs and call Theorem~\ref{parallelisometries} instead of calling Lemma~\ref{L1amalgamation}. Unlike before, we do not have a concrete representation to give a posteriori.
\end{proof}

Under the Continuum Hypothesis (CH), the Banach lattice of density $\mathfrak{c}$ and universal disposition for separable sublattices is unique up to lattice isometry. This is the classical Cantor's back-and-forth argument, cf.~\cite[Proposition 3.16]{sepiny}. Some extra information that can be stated in ZFC is that if $X$ is of universal disposition for separable Banach lattices, then the conclusion in Definition~\ref{defunivdispo} holds under the milder hypothesis that $R$ is separable and $S$ has density $\aleph_1$ (instead of $S$ being separable). This is because we can write $S=\bigcup_{\alpha<\omega_1} S_\alpha$ as the union of a continuous tower of separable sublattices starting at $S_0=R$, and then find successive compatible extensions $g_\alpha:S_\alpha\To X$ by induction. In particular $X$ will contain lattice isometric copies of all Banach lattices of density $\aleph_1$.

All the statements in the previous paragraph hold as well if we substitute Banach lattices by $L_1$ Banach lattices everywhere. It is unclear to us if, in some set-theoretic model that negates CH, Maharam decompositions involving cardinals less than $\mathfrak{c}$ could give an $L_1$ Banach lattice of universal disposition for separable $L_1$ lattices.

Finally, we want to make a comment about completeness. Although the lattices $L_1(\nu)$ are all Dedekind complete, a Banach lattice of universal disposition for separable Banach lattices is not. In fact, \emph{most} increasing sequences lack a supremum. The unavoidable exception are the weakly convergent sequences. First, an elementary lemma:

\begin{lemma}\label{supseqweak}
	Let $(x_n)_{n\in\mathbb N}$ be an increasing sequence in a Banach lattice $X$ and let $x_\infty\in X$ be an upper bound of the sequence. The following are equivalent:
	\begin{enumerate}
		\item $x_\infty = sup_Y\{x_n\}$ for every Banach lattice $Y$ with $X\subset Y$.
		\item $x_\infty = sup_Y\{x_n\}$ for every Banach lattice $Y$ with $\overline{lat}_X\{x_n,x_\infty\}\subset Y$.
		\item $x_\infty = sup_Y\{x_n\}$ for every separable Banach lattice $Y$ with $\overline{lat}_X\{x_n,x_\infty\}\subset Y$.
		\item $x_\infty$ is the limit of the sequence $x_n$ in the weak topology.
	\end{enumerate}
\end{lemma}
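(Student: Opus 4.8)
The plan is to prove the implications in the cyclic order $(1)\Rightarrow(2)\Rightarrow(3)\Rightarrow(4)\Rightarrow(1)$, since the first three are essentially immediate and the whole content sits in $(3)\Rightarrow(4)$ and $(4)\Rightarrow(1)$. The implications $(1)\Rightarrow(2)$ and $(2)\Rightarrow(3)$ are trivial, as they only restrict the class of ambient lattices $Y$ over which the supremum is tested: any $Y$ satisfying the hypothesis of $(2)$ (resp.\ $(3)$) in particular contains $\overline{lat}_X\{x_n,x_\infty\}$, so a supremum valid for all larger $Y$ is valid for these too. Thus I would dispatch these in one line each.

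For $(3)\Rightarrow(4)$, I would argue by contraposition. Assume $x_\infty$ is \emph{not} the weak limit of $(x_n)$. Since $x_\infty$ is an upper bound and the sequence is increasing, $x_\infty - x_n \geq 0$ decreases; if the sequence did not converge weakly to $x_\infty$, I would produce a separable ambient lattice witnessing the failure of the supremum. The clean way is to pass to the separable sublattice $W=\overline{lat}_X\{x_n,x_\infty\}$, on which everything relevant already lives, and use that a positive decreasing sequence in a Banach lattice converges weakly to its infimum when that infimum is $0$ (an order-continuity-type fact at a single point). Concretely, failure of weak convergence gives, after passing to a subsequence, a functional $\varphi$ separating $x_\infty$ from the $x_n$ uniformly; the natural move is to build from $\varphi$, via the construction $\psi_{\varphi^+}$ of Section~\ref{s:homoL1}, a homomorphism into some $L_1(\nu)$ and then enlarge $W$ inside a separable lattice $Y$ in which the images of $x_n$ fail to supremize the image of $x_\infty$. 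I expect \textbf{this step to be the main obstacle}: one must manufacture an explicit separable overlattice $Y\supset W$ in which $\sup_Y\{x_n\}\neq x_\infty$, and the amalgamation/pushout machinery (Theorem~\ref{parallelisometries}) together with the $L_1$-amalgamation Lemma~\ref{L1amalgamation} is the natural tool to glue in a new element strictly between the $x_n$ and $x_\infty$.

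For $(4)\Rightarrow(1)$, I would use the standard fact that suprema are preserved under weak limits. Suppose $x_n \to x_\infty$ weakly and let $Y\supset X$ be any Banach lattice. If $y\in Y$ is an upper bound for all $x_n$, I must show $y\geq x_\infty$. Pick any positive $\varphi\in Y^\ast$; since $\varphi$ is positive and order-bounded, $\varphi(y)\geq \varphi(x_n)$ for all $n$, and $\varphi(x_n)\to\varphi(x_\infty)$ because $\varphi|_X$ is weakly continuous and $x_n\to x_\infty$ weakly in $X$ (the inclusion $X\subset Y$ being isometric, hence weak-weak continuous). Hence $\varphi(y)\geq\varphi(x_\infty)$, i.e.\ $\varphi(y-x_\infty)\geq 0$ for every positive $\varphi\in Y^\ast$. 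By the fact that the positive cone of $Y$ is the intersection of the half-spaces $\{\varphi\geq 0\}$ over positive $\varphi$ (a consequence of the Hahn--Banach separation together with positivity of the dual cone, cf.\ \cite[Proposition 1.5.7]{M-N}), this forces $y-x_\infty\geq 0$, so $y\geq x_\infty$. Since $x_\infty$ is itself an upper bound, it is the least one, giving $x_\infty=\sup_Y\{x_n\}$. This closes the cycle.
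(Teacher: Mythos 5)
Your implication $(4)\Rightarrow(1)$ is correct and is essentially the paper's argument (test against positive functionals and use that the positive cone is determined by them). But the cycle breaks at $(1)\Rightarrow(2)$: this implication is not trivial, and your justification has the quantifiers backwards. Since $\overline{lat}_X\{x_n,x_\infty\}\subset X$, every $Y$ containing $X$ contains $\overline{lat}_X\{x_n,x_\infty\}$, but not conversely; thus (2) quantifies over a \emph{larger} class of ambient lattices than (1) and is the \emph{stronger} statement. What is trivial is $(2)\Rightarrow(1)$, not $(1)\Rightarrow(2)$. In the paper, $(1)\Rightarrow(2)$ is obtained only after proving $(1)\Leftrightarrow(4)$, using that condition (4) is unchanged when one replaces $X$ by a closed sublattice containing all the $x_n$ and $x_\infty$ (functionals restrict to, and extend from, such a sublattice by Hahn--Banach); then (4), which holds equally in $\overline{lat}_X\{x_n,x_\infty\}$, yields statement (1) with $X$ replaced by that sublattice, which is exactly (2).

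The second gap is $(3)\Rightarrow(4)$, which you leave as a sketch and which rests on a false ingredient: it is not true that a decreasing positive sequence with infimum $0$ in a Banach lattice converges weakly to $0$. Take $f_n(t)=t^n$ in $C[0,1]$: one has $\inf_n f_n=0$ in $C[0,1]$, yet $\int f_n\,d\mu\to\mu(\{1\})$ for every positive measure $\mu$, so there is no weak convergence to $0$. Indeed, if your claimed fact were true, having a supremum inside the fixed lattice $X$ would already force weak convergence and the lemma would be pointless; the whole content of the lemma is that order convergence inside one lattice does not control the weak topology, only supremum in \emph{every} overlattice does. Moreover, the amalgamation/push-out machinery you invoke is both unnecessary and not carried out: one must actually exhibit an overlattice of $W=\overline{lat}_X\{x_n,x_\infty\}$ containing an upper bound of $\{x_n\}$ different from $x_\infty$. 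The paper does this with an elementary construction: take $Y=X^{\ast\ast}$, a Banach lattice containing $X$, and let $x_{\mathcal{U}}=\lim_{\mathcal{U}}x_n$ in the weak$^{\ast}$ topology along a nonprincipal ultrafilter $\mathcal{U}$. If $(x_n)$ does not converge weakly to $x_\infty$, one can choose $\mathcal{U}$ with $x_{\mathcal{U}}\neq x_\infty$, and testing against positive functionals gives $x_n\leq x_{\mathcal{U}}\leq x_\infty$ for all $n$, so $x_\infty$ is not the supremum of $\{x_n\}$ in $X^{\ast\ast}$; restricting to the separable closed sublattice of $X^{\ast\ast}$ generated by $\{x_n,x_\infty,x_{\mathcal{U}}\}$ produces the separable counterexample needed to negate (3). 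You should restructure along these lines: prove $(4)\Leftrightarrow(1)$ directly, then derive $(1)\Leftrightarrow(2)\Leftrightarrow(3)$ from the invariance of (4) under passing to sublattices.
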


\begin{proof}
	For $(4)\Rightarrow (1)$, notice that if we have $x_n \leq y \leq x_\infty$ for all $n$, then by taking limit on $n$, we get that for every positive $x^\ast\in X^\ast$
	$$\langle x^\ast,x_\infty\rangle \leq \langle x^\ast,y\rangle \leq \langle x^\ast,x_\infty\rangle.$$
	Since $x_\infty$ and $y$ coincide on all positive functionals, they are equal.
	
	
	For $(1)\Rightarrow (4)$, note that since $(x_n)_{n\in\mathbb N}$ is an increasing and order bounded sequence, for every $x^\ast\in X^\ast$, by splitting this element in positive and negative parts, we know that the sequence $x^\ast(x_n)$ is convergent. Thus, there is $y\in Y= X^{\ast\ast}$ such that $y=\lim^{w^\ast} x_n$ (the limit taken in the weak$^\ast$ topology). For every $n<m$ we have that $x_n\leq x_m\leq x_\infty$. If we fix $n$ and make $m$ increase we obtain that, for every positive $x^\ast\in X^\ast$,
	$$ \langle x^\ast,x_n\rangle \leq \langle x^\ast,y\rangle \leq \langle x^\ast,x_\infty\rangle.$$
	It follows that in the Banach lattice $Y$, $x_n\leq y \leq x_\infty$ for every $n$. Since $x_\infty = \sup_Y \{x_n\}$, we must have $y=x_\infty$ and, in particular, $x_\infty$ is the limit of $(x_n)_{n\in\mathbb N}$ in the weak topology.
	
	Once we know that $(4)\Leftrightarrow (1)$, since condition $(4)$ does not change if we restrict to a sublattice of $X$ that contains all $x_n$ and $x_\infty$, the equivalence $(1)\Leftrightarrow (2)$ follows.
	
	Finally, $(2)\Leftrightarrow (3)$ because if we find $x_n\leq y <x_\infty$ in some $Y$, then we can restrict to the separable lattice generated by all $x_n$, $x$ and $y$.
\end{proof}

\begin{proposition}\label{notsigmadedekind}
	If a Banach lattice $\mathfrak{X}$ is of universal disposition for separable Banach lattices then the following holds: An increasing sequence has a supremum if and only if it is weakly convergent.	In particular, $\mathfrak{X}$ contains bounded above increasing sequences without a supremum.
\end{proposition}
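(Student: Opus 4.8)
The plan is to establish the equivalence first and then read off the ``in particular'' from it. Throughout, $\mathfrak{X}$ is of universal disposition for separable Banach lattices, so it contains a lattice-isometric copy of every separable Banach lattice (apply Definition~\ref{defunivdispo} with $R=\{0\}$). The easy implication is that weak convergence forces a supremum: if an increasing sequence $(x_n)$ converges weakly to $x_\infty\in\mathfrak{X}$, then testing against positive functionals (using $z\geq 0\iff \phi(z)\geq 0$ for all $\phi\in\mathfrak{X}^\ast_+$) shows $x_n\leq x_\infty$, so $x_\infty$ is an upper bound, and Lemma~\ref{supseqweak}, implication $(4)\Rightarrow(1)$ applied with $X=Y=\mathfrak{X}$, gives $x_\infty=\sup_{\mathfrak X}\{x_n\}$.

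The substantial implication is the converse: if $(x_n)$ is increasing and $s=\sup_{\mathfrak X}\{x_n\}$ exists, then $(x_n)$ converges weakly to $s$ (note that by the easy direction it cannot converge weakly to anything other than $s$). I would argue by contradiction, and this is where universal disposition enters. Assuming $(x_n)$ does not converge weakly to $s$, the failure of condition $(4)$ in Lemma~\ref{supseqweak}, hence of the equivalent $(3)$, produces a separable Banach lattice $Y$ containing $R:=\overline{lat}_{\mathfrak X}\{x_n,s\}$ in which $s$ fails to be the supremum. Since $s$ is still an upper bound in $Y$, there is an upper bound $u\in Y$ of $\{x_n\}$ with $u\not\geq s$, and then $y:=u\wedge s$ satisfies $x_n\leq y\leq s$ and $y\neq s$. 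Now I apply universal disposition to the two inclusions $R\hookrightarrow Y$ and $R\hookrightarrow \mathfrak{X}$ (with $Y$ separable): there is an embedding $h:Y\to\mathfrak X$ restricting to the identity on $R$, so $h(x_n)=x_n$ and $h(s)=s$. Since $h$ preserves order and is injective, $h(y)$ is an upper bound of $\{x_n\}$ in $\mathfrak{X}$ with $x_n\leq h(y)\leq s$ and $h(y)\neq s$, contradicting that $s$ is the least upper bound. This completes the equivalence.

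For the ``in particular'' I would exhibit a concrete bounded, increasing, non–weakly convergent sequence; by the equivalence it then has no supremum. Fix a lattice-isometric embedding $j:c\to\mathfrak{X}$ of the space $c$ of convergent sequences, and let $s_n=j\big(\sum_{k\leq n}e_k\big)$. This sequence is increasing and bounded above by $j(\mathbb{1})$. To see that it is not weakly convergent in $\mathfrak{X}$, consider the closed subspace $Z:=j(c_0)$, which is lattice-isometric to $c_0$ and, being norm-closed and convex, is weakly closed. If $s_n\to w$ weakly in $\mathfrak{X}$, then $w\in Z$, and extending functionals from $Z$ to $\mathfrak{X}$ by Hahn--Banach shows $s_n\to w$ weakly inside $Z\cong c_0$; but the partial sums of the $c_0$-basis are weakly Cauchy and \emph{not} weakly convergent in $c_0$, a contradiction. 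Hence $(s_n)$ has no supremum in $\mathfrak{X}$.

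The main obstacle is the converse implication, together with the verification in the last step that the weak limit cannot ``escape'' into $\mathfrak{X}$. The equivalence is powered by the single application of universal disposition that transports the witness $y$ back into $\mathfrak{X}$ strictly below $s$, and the non–weak-convergence of $(s_n)$ hinges precisely on $Z=j(c_0)$ being weakly closed, so that any putative weak limit is trapped inside a copy of $c_0$. The easy direction and the order-lattice manipulations (the characterization of positivity by positive functionals, and the choice $y=u\wedge s$) are routine.
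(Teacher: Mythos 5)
Your proof is correct and follows essentially the same route as the paper: both directions of the equivalence come from Lemma~\ref{supseqweak}, with universal disposition used to transport the separable witness $Y$ (together with the upper bound $y$ strictly below $s$) back into $\mathfrak{X}$, contradicting that $s$ is the supremum there. The only difference is cosmetic: for the ``in particular'' the paper argues generically that copies of separable lattices inside $\mathfrak{X}$ supply bounded increasing non--weakly-convergent sequences (noting, as you do via Mazur and Hahn--Banach, that weak non-convergence persists in the larger space), while you instantiate this with the partial sums of the $c_0$-basis inside a copy of $c$.
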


\begin{proof}
	Suppose that the sequence has a supremum $x_\infty$ in $\mathfrak{X}$ but is not weakly convergent. By Lemma \ref{supseqweak}, we could find a separable Banach lattice $Y$ with $\overline{lat}_X\{x_n,x_\infty\}\subset Y$ and an upper bound of the sequence $y\in Y$ with $y<x_\infty$. By the universal disposition property of $\mathfrak{X}$, we can find $Y$ inside $\mathfrak{X}$, contradicting  that $x_\infty$ is the supremum in $\mathfrak{X}$. Conversely, if the sequence is weakly convergent to some $x$, then again by the Lemma \ref{supseqweak} we get that $x=\sup\{x_n\}$. Since $\mathfrak{X}$ contains (many) copies of all separable Banach lattices there are many bounded increasing sequences that do not converge weakly, hence have no supremum. In fact if an increasing sequence $x_n$ does not have a supremum inside a given sublattice, then it will not have it in $\mathfrak{X}$, because we cannot make a sequence weakly convergent by passing to a larger space.
\end{proof}

\section{Examples of separably $\mathcal{BL}$-injective Banach lattices}\label{s:sepinyBL}

\begin{definition}
A Banach lattice $Z$ is separably $\mathcal{BL}$-injective if for every separable Banach lattice $Y$, $X$ a closed sublattice of $Y$ and $T:X\rightarrow Z$ a lattice homomorphism, then there exists a lattice homomorphism
$\tilde T:Y\rightarrow Z$ which extends $T$; in other words, the following diagram commutes:
$$
	\xymatrix{Y\ar[drr]^{\tilde T}&&\\
	X\ar[rr]_{T}\ar@{^{(}->}[u]&& Z}
$$
If there is $\lambda>0$ such that there is always an extension with with $\|\tilde T\|\leq \lambda \|T\|$, then we say $Z$ is $\lambda$-separably $\mathcal{BL}$-inyective.
\end{definition}

Note that an argument similar to \cite[Proposition 1.6]{sepiny} shows that every separably $\mathcal{BL}$-injective Banach lattice is $\lambda$-separably $\mathcal{BL}$-injective for some $\lambda\geq 1$. The work done in the previous section gives our first examples of separably $\mathcal{BL}$-injective objects:

\begin{theorem}\label{separablyinjectiveL1}
	The (resp. $L_1$) Banach lattices of universal disposition for separable (resp. $L_1$) Banach lattices are 1-separably $\mathcal{BL}$-injective.
\end{theorem}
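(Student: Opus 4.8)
The plan is to deduce $1$-separable $\mathcal{BL}$-injectivity from universal disposition by a push-out argument that turns an arbitrary lattice homomorphism into an isometric embedding, to which the universal disposition property can then be applied. Write $\mathfrak{X}$ for the given lattice of universal disposition, let $X\subset Y$ be separable Banach lattices, and let $T:X\To\mathfrak{X}$ be a lattice homomorphism. Since a positive rescaling preserves the lattice-homomorphism property, after replacing $T$ by $T/\|T\|$ and multiplying the final extension back by $\|T\|$ we may assume $\|T\|=1$ (the case $T=0$ being trivial); this is exactly what produces the constant $\lambda=1$. The obstruction to a naive application of universal disposition is that the latter speaks only of isometric embeddings, whereas $T$ need be neither injective nor isometric. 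To bridge this I would corestrict to the separable sublattice $Z:=\overline{T(X)}\subset\mathfrak{X}$, so that $T$ is viewed as a homomorphism $T:X\To Z$ while the inclusion $\iota_Z:Z\hookrightarrow\mathfrak{X}$ is an isometric embedding.

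Next I would form the isometric push-out $P$ of $T:X\To Z$ and the inclusion $\iota:X\hookrightarrow Y$, with the canonical arrows $\beta_1:Z\To P$ and $\beta_2:Y\To P$, as in Theorem~\ref{push-out}. Since $P$ is generated as a Banach lattice by the images of the two separable lattices $Z$ and $Y$, it is separable, and $\|\beta_2\|\le 1$. The crucial point is that $\iota$ is an isometry, so Theorem~\ref{parallelisometries} forces the parallel arrow $\beta_1$ to be an isometric lattice embedding. In the case of universal disposition for \emph{separable Banach lattices}, I can then apply the defining property directly to the isometric embeddings $\beta_1:Z\To P$ (with $P$ separable) and $\iota_Z:Z\hookrightarrow\mathfrak{X}$, obtaining an isometric embedding $h:P\To\mathfrak{X}$ with $h\beta_1=\iota_Z$. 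Setting $\tilde T:=h\beta_2$ yields the sought extension: push-out commutativity gives $\tilde T\iota=h\beta_2\iota=h\beta_1 T=\iota_Z T=T$ on $X$, while $\|\tilde T\|\le\|h\|\,\|\beta_2\|\le 1=\|T\|$.

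For the $L_1$ case the same scheme applies with one additional step, which I expect to be the main technical difficulty. Here $\mathfrak{X}=L_1(\mu)$ is of universal disposition only for separable $L_1$ lattices, so the possibly non-$L_1$ separable lattice $P$ cannot be fed directly into the universal disposition property. I would exploit that $Z$, being a closed sublattice of $L_1(\mu)$, automatically satisfies Kakutani's identity \eqref{kakutani} and is therefore a separable $L_1$ lattice; by Lemma~\ref{L1sigmafinito} we may write $Z\simeq L_1(\lambda)$ with $\lambda$ a probability, and the positive norm-one functional $\zeta(z)=\int z\,d\lambda$ satisfies $\zeta(|z|)=\|z\|$ for all $z\in Z$. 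Using the positive Hahn--Banach theorem (cf. \cite[Proposition 1.5.7]{M-N}) along the isometric embedding $\beta_1$, I would extend the positive functional $\zeta\circ\beta_1^{-1}$ defined on $\beta_1(Z)$ to a positive functional $\eta\in P^\ast$ with $\|\eta\|=1$ and $\eta\beta_1=\zeta$. The associated homomorphism $\psi_\eta:P\To L_1(\nu_\eta)$ of Section~\ref{s:homoL1} then lands in a \emph{separable} $L_1$ lattice (as $\psi_\eta$ has dense range and $P$ is separable), and using $|\beta_1 z|=\beta_1|z|$ together with $\eta\beta_1=\zeta$ one computes $\|\psi_\eta\beta_1 z\|=\eta(|\beta_1 z|)=\zeta(|z|)=\|z\|$, so $\psi_\eta\circ\beta_1$ is isometric on $Z$.

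Finally I would apply the $L_1$-universal disposition property to the isometric embeddings $\psi_\eta\beta_1:Z\To L_1(\nu_\eta)$ and $\iota_Z:Z\hookrightarrow L_1(\mu)$, obtaining an isometric $h':L_1(\nu_\eta)\To L_1(\mu)$ with $h'\psi_\eta\beta_1=\iota_Z$. Then $h:=h'\psi_\eta:P\To L_1(\mu)$ satisfies $h\beta_1=\iota_Z$ and $\|h\|\le 1$, and $\tilde T:=h\beta_2$ is the required extension, exactly as in the general case. The one genuinely delicate point, and the step demanding the most care, is this detour through an auxiliary $L_1$ lattice: one must preserve the norm on \emph{all} of $Z$ (not merely on a single element, as in Lemma~\ref{sigmafinitenorming}) while keeping the target separable, and the argument above achieves this by exhibiting a single positive functional $\eta$ on $P$ whose restriction to $Z$ is the canonical $L_1$-norming functional $\zeta$.
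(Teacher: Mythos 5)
Your proof is correct and follows essentially the same route as the paper's: form the push-out of the inclusion $X\hookrightarrow Y$ with the corestriction $T:X\to\overline{T(X)}$, use Theorem~\ref{parallelisometries} to conclude that the arrow out of $\overline{T(X)}$ is an isometric embedding, and apply universal disposition --- in the $L_1$ case after first mapping the push-out into a separable $L_1$ lattice via a positive norm-one functional extending the canonical integration functional, which is exactly the paper's $z^\ast$ and $\psi_{z^\ast}$. Your explicit normalization $\|T\|=1$ and your remarks on separability of the push-out and of $L_1(\nu_\eta)$ are details the paper leaves implicit, and the normalization is in fact needed for Theorem~\ref{parallelisometries} to apply to the relevant square.
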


\begin{proof}
	Consider first the case when $\mathfrak{X}$ is of universal disposition for separable Banach lattices.  Suppose that $X\subset Y$ are separable Banach lattices and $T:X\To \mathfrak{X}$ is a lattice homomorphism. We can then make a push-out
		\begin{center}
		\begin{tikzcd}
		Y\arrow[r,"j"] & PO &\\
		X\arrow[u]\arrow[r] & \overline{T(X)}\arrow[r]\arrow[u,"i"]  & \mathfrak{X}
		\end{tikzcd}
	\end{center}
	By Lemma~\ref{parallelisometries}, $i$ is an isometric lattice embedding, so by the universal disposition property of $\mathfrak{X}$ we can find an embedding $R:PO\To \mathfrak{X}$ that closes the diagram, and $R\circ j$ will be the homomorphism that we are looking for.
	The proof for the $L_1$ case will be similar with an additional step. Suppose that $X\subset Y$ are separable Banach lattices and $T:X\To L_1(\mu)$ is a lattice homomorphism. First of all, since $X$ is separable, the range of $X$ is contained in a separable closed sublattice $Z$ of $L_1(\mu)$. Because of Kakutani's theorem and Lemma~\ref{L1sigmafinito}, we can write $Z\simeq L_1(\nu)$ with $\nu$ a probability measure. Next step is to produce a push-out
	\begin{center}
		\begin{tikzcd}
		Y\arrow[r,"j"] & PO &\\
		X\arrow[u]\arrow[r] & L_1(\nu)\arrow[r]\arrow[u,"i"]  & L_1(\mu)
		\end{tikzcd}
	\end{center}
	By Lemma~\ref{parallelisometries}, $i$ is an isometric lattice embedding. By the Hahn-Banach theorem, we can find a positive element $z^\ast\in PO^\ast$ of norm one  that extends the measure functional on $L_1(\nu)$. That is, $z^\ast(i(f)) = \int f\ d\nu $ for all $f\in L_1(\nu)$. Now consider
	
	\begin{center}
		\begin{tikzcd}
		Y\arrow[r,"j"] & PO\arrow[r,"\psi_{z^\ast}"] & L_1(\nu_{z^\ast}) \\
		X\arrow[u]\arrow[r] & L_1(\nu)\arrow[r]\arrow[u,"i"]  & L_1(\mu)
		\end{tikzcd}
	\end{center}
	Notice that $\psi_{z^\ast}\circ i$ is an isometric lattice embedding. Because of the universal disposition property of $L_1(\mu)$, we can find an isometric lattice embedding $R$ making a commutative diagram
	\begin{center}
	\begin{tikzcd}
	Y\arrow[r,"j"] & PO\arrow[r,"\psi_{z^\ast}"] & L_1(\nu_{z^\ast})\arrow[d,"R"]\\
	X\arrow[u]\arrow[r] & L_1(\nu)\arrow[r]\arrow[u,"i"]  & L_1(\mu)
	\end{tikzcd}
	\end{center}
	The homomorphism $R\circ \psi_{z^\ast}\circ j$ is the one that we are looking for.
\end{proof}

The following two operations generate new separably $\mathcal{BL}$-injective from old ones. The proofs are straightforward.

\begin{proposition}\label{projection}
	Let $Z$ be a $\lambda$-separably $\mathcal{BL}$-injective Banach lattice and let $X\subset Z$ be a sublattice which is complemented by a lattice projection of norm $\lambda'$. Then $X$ is  $\lambda\cdot\lambda'$-separably $\mathcal{BL}$-injective.
\end{proposition}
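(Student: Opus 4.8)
The plan is a standard retraction argument. Write $j:X\To Z$ for the inclusion, which is an isometric lattice embedding, and let $P:Z\To X$ denote the lattice projection, so that $P$ is a lattice homomorphism with $\|P\|=\lambda'$ and $P\circ j=\mathrm{id}_X$ (a projection fixes its range pointwise). To verify that $X$ is $\lambda\lambda'$-separably $\mathcal{BL}$-injective, I would start from an arbitrary configuration witnessing the definition: separable Banach lattices $U\subset V$ and a lattice homomorphism $T:U\To X$ to be extended.

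First I would push $T$ into the ambient lattice by composing with the inclusion, obtaining the lattice homomorphism $j\circ T:U\To Z$, which satisfies $\|j\circ T\|=\|T\|$ because $j$ is isometric. Since $Z$ is $\lambda$-separably $\mathcal{BL}$-injective and $U\subset V$ are separable, there is a lattice homomorphism $\hat S:V\To Z$ extending $j\circ T$ with $\|\hat S\|\leq\lambda\|j\circ T\|=\lambda\|T\|$.

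Next I would project back with $P$ and set $\tilde T:=P\circ\hat S:V\To X$. As a composition of lattice homomorphisms this is again a lattice homomorphism, and its norm is controlled by $\|\tilde T\|\leq\|P\|\,\|\hat S\|\leq\lambda'\cdot\lambda\|T\|=\lambda\lambda'\|T\|$. Finally I would check that $\tilde T$ extends $T$: for $u\in U$ one has $\tilde T(u)=P(\hat S(u))=P(j(T(u)))=T(u)$, using that $\hat S$ extends $j\circ T$ and that $P$ restricts to the identity on $X$, where $T(u)$ lives. This produces the required extension with the stated norm bound.

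There is no real obstacle here; everything reduces to the facts that the inclusion is isometric, that compositions of lattice homomorphisms are again lattice homomorphisms, and that a lattice projection fixes its range pointwise. The only point meriting a line of care is that $P$ is genuinely a lattice homomorphism (this is precisely the content of the word \emph{lattice} in ``lattice projection''), since this is what guarantees that $\tilde T$ respects $\wedge$, $\vee$ and $|\cdot|$; the norm estimate is then just submultiplicativity of the operator norm, which explains the multiplicative behaviour of the constants.
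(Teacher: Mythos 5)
Your proof is correct and is exactly the standard retraction argument the paper has in mind (the paper omits the proof, calling it straightforward): embed, extend into $Z$ by $\lambda$-separable $\mathcal{BL}$-injectivity, and compose with the lattice projection $P$, with the norm bound $\|P\circ\hat S\|\leq\lambda'\lambda\|T\|$ and the extension property following since $P$ fixes $X$ pointwise.
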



\begin{proposition}
	If $(Z_i)_{i\in I}$ are $\lambda$-separably $\mathcal{BL}$-injective Banach lattices, then the sum $(\oplus_{i\in I}Z_i)_{\ell_\infty}$ is also $\lambda$-separably $\mathcal{BL}$-injective .
\end{proposition}

If $L_1(\mu)$ is of universal disposition for separable $L_1$ lattices, then $\mu$ cannot be $\sigma$-finite because we observed, in the comments after Theorem~\ref{BLuniversaldisp}, that $L_1(\mu)$ must contain copies of every $L_1(\nu)$ of density $\aleph_1$. However, finite measures can produce separably injective lattices: 

\begin{theorem}\label{L1typekappa}
	$L_1([0,1]^\kappa)$ is 1-separably injective for any uncountable cardinal $\kappa$.
\end{theorem}

\begin{proof}
	 By Theorem~\ref{L1disposition}, Theorem~\ref{separablyinjectiveL1} and Proposition~\ref{projection}, $L_1([0,1]^\mathfrak{c})$ is 1-separably $\mathcal{BL}$-injective. In the next step we prove that $L_1([0,1]^{\aleph_1})$ is 1-separably injective. For an injective function $\gamma:A\To B$
	 we have a lattice isometric embedding $u_\gamma:L_1([0,1]^A) \longrightarrow L_1([0,1]^B)$ that sends the class of a function $f$ to the class of $u_\gamma(f)$ given by $$u_\gamma(f)((t_b)_{b\in B}) = f((t_{\gamma(a)})_{a\in A}).$$ 
	 In the particular case when $A\subset \mathfrak{c}$ and $\iota:A\To \mathfrak{c}$ is the inclusion map, we will write $u_A$ instead of $u_\iota$.

	 \textbf{Claim:} For $A,B\subset \mathfrak{c}$, the range of $u_{A\cap B}$ is the intersection of the ranges of $u_A$ and $u_B$. 
	 
	  \textit{Proof of Claim:} For the non-obvious inclusion, suppose that $f=u_{A}(g) = u_B(h)$ is in the ranges of $u_A$ and $u_B$, and we prove that $f$ is in the range of $u_{A\cap B}$. The set $$\mathcal{N} = \{t\in [0,1]^\mathfrak{c}: g(t|_A) \neq h(t|_B)\}$$ is null. Define $D = A\setminus B$. By Fubini, there exists $p\in [0,1]^{D}$  such that $$\mathcal{N}_p = \{z\in [0,1]^{\mathfrak{c}\setminus D} : p\otimes z \in\mathcal{N}\}$$ is null. Consider a larger null set
	  $$\mathcal{M} = \mathcal{N} \cup\{t\in [0,1]^\mathfrak{c} : t|_{\mathfrak{c}\setminus D} \in \mathcal{N}_p\}.$$
	  Given $t\in[0,1]^{\mathfrak{c}}$, let $t_p = p \otimes t|_{\mathfrak{c}\setminus D}$ be the result of switching to $p$ all coordinates in $D$.
	  For every $t\not\in\mathcal{M}$, 
	  $$g(t|_A) = h(t|_B) \text{ since } t\not\in\mathcal{N},$$
	  $$g(t_p|_A) = h(t_p|_B) \text{ since } t|_{\mathfrak{c}\setminus D} \not\in \mathcal{N}_p$$
	 
	  But notice that $t_p|_B =t|_B$, so we conclude that if $t\not\in\mathcal{M}$, then
	  $$ f(t) = g(t|_A) = h(t|_B) = h(t_p|_B) = g(t_p|_A).$$ 
	  
	 We can then define $j:[0,1]^{A\cap B}\To \mathbb{R}$ as $j(z) = g(z\otimes p)$. For every $t\not\in \mathcal{M}$, we will have
	 $$f(t) = g(t|_A) = g(t_p|_A) = j(t|_{A\cap B}).$$
	 This proves that $f$ is in the range of $u_{A\cap B}$.

Coming back to our proof, we have to show that for a homomorphism $T:X\longrightarrow L_1([0,1]^{\aleph_1})$ and a separable lattice $Y$ containing $X$, $T$ can be extended to a homorphism $\tilde{T}:Y\To L_1([0,1]^{\aleph_1})$ of the same norm. Since we know that $L_1([0,1]^\mathfrak{c})$ is 1-separably injective, there is a homomorphism $S:Y\To L_1([0,1]^\mathfrak{c})$ that extends $u_{\aleph_1}\circ T$ with $\|T\|=\|S\|$. Since $Y$ is separable, and every function from $L_1([0,1]^\mathfrak{c})$ essentially depends on countably many coordinates, we can find a countable subset $A\subset \mathfrak{c}$ such that $S(Y)$ is contained inside the range of $u_A$. Take an injective function $\gamma:A\To \aleph_1$ such that $\gamma(a) = a$ for all $a\in A\cap\aleph_1$. The homomorphism $\tilde{T} = u_\gamma\circ u_A^{-1}\circ S$ is the one we are looking for. Let us just check that $\tilde{T}x = Tx$ for $x\in X$.
Since $u_{\aleph_1}$ is one-to-one, it is enough to show that
 $$u_{\aleph_1}Tx = u_{\aleph_1}u_\gamma u_A^{-1} Sx$$
Remember that $u_{\aleph_1} Tx = Sx$ is simultaneously in the range of $u_A$ and in the range of $u_{\aleph_1}$, so it is in the range of $u_{A\cap\aleph_1}$ by the Claim above. Write $Sx = u_{A\cap\aleph_1} h$ for some $h\in L_1([0,1]^{A\cap\aleph_1})$. What we have to prove is that
$$ u_{A\cap\aleph_1} h = u_{\aleph_1}u_\gamma u_A^{-1}u_{A\cap\aleph_1} h$$
Now just think of it: $h_1 = u_{A\cap\aleph_1} h$ is the function on $L_1([0,1]^\mathfrak{c})$ given by $(t_i)_{i\in\mathfrak{c}}\mapsto h((t_i)_{i\in A\cap\aleph_1})$.
Then $h_2 = u_A^{-1}h_1$ is the function on $L_1([0,1]^A)$ given by $(t_i)_{i\in A}\mapsto h((t_i)_{i\in A\cap\aleph_1})$. Then $h_3 = u_\gamma h_2$ is the function on $L_1([0,1]^{\aleph_1})$ given by $$(t_k)_{k\in \aleph_1}\mapsto h_2((t_{\gamma(i)})_{i\in A}) = h((t_{\gamma(i)})_{i\in A\cap\aleph_1}) =  h((t_{i})_{i\in A\cap\aleph_1})$$ since $\gamma$ is the identity on $A\cap\aleph_1$. Finally, it is clear that $u_{\aleph_1}h_3$ is given by the same expression as $u_{A\cap\aleph_1}h$ as we wanted to prove. This finishes the proof of the case $\kappa=\aleph_1$.

The case of arbitrary uncountable $\kappa$ follows from the $\aleph_1$ case because every separable sublattice of $L_1([0,1]^\kappa)$ is contained in a lattice isometric copy of $L_1([0,1]^{\aleph_1})$, in fact inside the range of a function $u_A$ as before for some $A\subset\kappa$ of cardinality $\aleph_1$. This is again because every element of $L_1([0,1]^\kappa)$ can be represented by a function that depends only on countably many coordinates.
\end{proof}

In contrast to this, the examples of the kind of Theorem~\ref{BLuniversaldisp} cannot have density character less than $\mathfrak{c}$:

\begin{proposition}\label{KB}
	If a separably $\mathcal{BL}$-injective Banach lattice $Z$ contains a copy of $c_0$, then $dens(Z)\geq \mathfrak{c}$.
\end{proposition}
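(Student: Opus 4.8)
The plan is to produce a family $(z_\alpha)_{\alpha<\mathfrak{c}}$ of elements of $Z$ that are uniformly separated, say $\|z_\alpha-z_\beta\|\geq c$ for a fixed $c>0$ whenever $\alpha\neq\beta$; since a $c$-separated family of cardinality $\mathfrak{c}$ cannot sit inside a space of density $<\mathfrak{c}$ (the balls of radius $c/2$ around the points are pairwise disjoint and each must meet a dense set), this yields $dens(Z)\geq\mathfrak{c}$. The raw material is an almost disjoint family $(A_\alpha)_{\alpha<\mathfrak{c}}$ of infinite subsets of $\mathbb{N}$, meaning each $A_\alpha$ is infinite while $A_\alpha\cap A_\beta$ is finite for $\alpha\neq\beta$; such a family exists in ZFC.

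I would write $\iota\colon c_0\To Z$ for the given lattice isomorphic embedding and set $u_n=\iota(e_n)$, where $(e_n)$ is the unit basis of $c_0$, so that the $u_n$ are pairwise disjoint positive elements with $\|u_n\|\geq c:=1/\|\iota^{-1}\|>0$. Viewing $c_0\subset\ell_\infty$ in the usual way, for each $\alpha$ I let $Y_\alpha$ be the separable closed sublattice of $\ell_\infty$ generated by $c_0$ together with the single function $1_{A_\alpha}$. Since $c_0\subset Y_\alpha$ is an inclusion of separable Banach lattices and $\iota$ is a lattice homomorphism, separable $\mathcal{BL}$-injectivity of $Z$ supplies a lattice homomorphism $\tilde{\iota}_\alpha\colon Y_\alpha\To Z$ extending $\iota$, and I define $z_\alpha:=\tilde{\iota}_\alpha(1_{A_\alpha})$. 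Note that no norm control on the extensions is needed for the argument.

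Everything hinges on two order relations coming from the fact that each $\tilde{\iota}_\alpha$ is a positive lattice homomorphism that agrees with $\iota$ on $c_0$. First, for $n\in A_\alpha$ we have $1_{A_\alpha}\geq e_n$ in $\ell_\infty$, hence $z_\alpha\geq u_n$. Second, for $n\notin A_\beta$ we have $1_{A_\beta}\wedge e_n=0$, hence $z_\beta\wedge u_n=\tilde{\iota}_\beta(1_{A_\beta}\wedge e_n)=0$. Fixing $\alpha\neq\beta$ and choosing $n\in A_\alpha\setminus A_\beta$ (possible because this set is infinite), monotonicity of $x\mapsto x^+$ together with the disjointness identity $(u_n-z_\beta)^+=u_n$ (valid since $u_n\wedge z_\beta=0$ with both elements positive) gives
$$(z_\alpha-z_\beta)^+\geq(u_n-z_\beta)^+=u_n,$$
so that $\|z_\alpha-z_\beta\|\geq\|(z_\alpha-z_\beta)^+\|\geq\|u_n\|\geq c$, as required.

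The subtle point, which I expect to be the real crux, is that the extensions $\tilde{\iota}_\alpha$ are chosen independently for each $\alpha$, and there is no single homomorphism defined on a common (necessarily non-separable) domain carrying all the $z_\alpha$ at once; thus one cannot compare $z_\alpha$ and $z_\beta$ by forming infima inside one map. The device that bypasses this is exactly the choice of an index $n\in A_\alpha\setminus A_\beta$: it lets me bound $z_\alpha-z_\beta$ from below using only the elements $u_n=\iota(e_n)$ that \emph{every} extension shares on the common subspace $c_0$, thereby converting almost disjointness of the $A_\alpha$ into a uniform separation estimate that is insensitive to which extensions were selected.
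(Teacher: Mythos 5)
Your proof is correct and follows essentially the same route as the paper: extend the lattice embedding of $c_0$ to the separable sublattice of $\ell_\infty$ generated by $c_0$ and a single characteristic function $1_A$, for continuum many sets $A$, and then use an index $n\in A_\alpha\setminus A_\beta$ together with the fact that every extension agrees with the original embedding on $c_0$ to obtain a uniform lower bound $\|z_\alpha-z_\beta\|\geq 1/\|\iota^{-1}\|$. The only differences are cosmetic: the paper takes \emph{all} infinite subsets of $\mathbb{N}$ (handling the case $A\setminus B=\emptyset$ by symmetry) instead of an almost disjoint family, performs the disjointness computation by pushing $(e_j-1_B)\vee 0=e_j$ through $T_B$ rather than via your identity $(u_n-z_\beta)^+=u_n$ in $Z$, and explicitly recalls that a subspace copy of $c_0$ in a Banach lattice upgrades to a sublattice copy, a standard fact you implicitly assume by taking the given embedding to be a lattice homomorphism.
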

\begin{proof}
	Remember that if $Z$ contains a subspace isomorphic to $c_0$, then $Z$ contains a sublattice isomorphic to $c_0$. Let $X=c_0$ and $T:X\rightarrow Z$ denote the corresponding lattice homomorphism (which is an into isomorphism). For every infinite set $A\subset \mathbb N$, let $1_A\in \ell_\infty$ denote the characteristic function of $A$ and let $X_A$ be the (separable) sublattice of $\ell_\infty$ generated by $c_0$ and $1_A$. By assumption there is an extension $T_A:X_A\rightarrow Z$.
	We claim that the family $\{T_A(1_A)\}$, of cardinality $\mathfrak{c}$, is separated in the sense that the distance between every two different vectors is bounded below by a fixed constant. Indeed, let $A\neq B$ be infinite subsets of $\mathbb{N}$. For $j\in\mathbb{N}$, let $e_j\in c_0$ denote the unit vector basis. Suppose there is $j\in A\backslash B$. Note that
	$$
	|T_A(1_A)-T_B(1_B)|\geq T_A(1_A)-T_B(1_B)\geq T(e_j)-T_B(1_B)=T_B(e_j-1_B).
	$$
	Hence, we have
	$$
	|T_A(1_A)-T_B(1_B)|\geq T_B(e_j-1_B)\vee 0=T_B((e_j-1_B)\vee0)=T (e_j).
	$$
	Since $T$ is a lattice isomorphism, we have that
	$$
	\|T_A(1_A)-T_B(1_B)\|_Z\geq\|T(e_j)\|_Z\geq \frac{1}{\|T^{-1}\|}.
	$$
	By symmetry, the same inequality holds when there is some $j\in B\backslash A$. Therefore, the family $\{T_A(1_A): A\subset \mathbb N\}$ is separated, as claimed.
\end{proof}

\section{General properties of separably $\mathcal{BL}$-injective Banach lattices}\label{s:properties}

After getting some examples, we collect now some basic properties of $\mathcal{BL}$-injective Banach lattices.

\begin{proposition}\label{sepinyinterpolation}
	Every separably $\mathcal{BL}$-injective Banach lattice $Z$ has the interpolation property. That is, for every two sequences $(x_n),(y_n)\subset Z$ such that $x_n\leq y_m$ for every $m,n\in\mathbb N$, there is $z\in Z$ such that $x_n\leq z\leq y_m$ for every $m,n\in \mathbb N$.
\end{proposition}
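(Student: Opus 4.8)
The plan is to manufacture the interpolating element abstractly inside the bidual of $Z$ and then pull it back into $Z$ by invoking separable $\mathcal{BL}$-injectivity. First I would replace the given data by a monotone sequence: setting $a_n=x_1\vee\cdots\vee x_n$ produces an \emph{increasing} sequence with the same lower constraints (since $x_n\le a_n$), and each $y_m$ is an upper bound of every $a_n$, because $x_i\le y_m$ for all $i$ forces $a_n\le y_m$. In particular $(a_n)$ is order bounded above, e.g. by $y_1$.

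Next I would pass to $Z^{\ast\ast}$, regarded as a Banach lattice with $Z$ sitting inside as a closed sublattice via the canonical lattice embedding. The one substantive input is the classical fact that the dual of any Banach lattice is Dedekind complete (see \cite{M-N}); applied to $Z^\ast$, this gives that $Z^{\ast\ast}=(Z^\ast)^\ast$ is Dedekind complete. Hence the increasing, order-bounded sequence $(a_n)$ admits a supremum $u=\sup_n a_n$ in $Z^{\ast\ast}$. By construction $x_n\le a_n\le u$ for every $n$, while $u\le y_m$ for every $m$ because each $y_m$ is an upper bound of $\{a_n\}$ and $u$ is the least one. Thus $u$ interpolates between the two sequences, but it lives in $Z^{\ast\ast}$ rather than in $Z$.

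To descend back to $Z$, let $X$ be the separable closed sublattice of $Z$ generated by $\{x_n,y_m:n,m\in\mathbb N\}$, and let $Y$ be the closed sublattice of $Z^{\ast\ast}$ generated by $X\cup\{u\}$. Being generated by a separable set together with a single extra vector, $Y$ is separable, and $X$ is a closed sublattice of $Y$. Now I would apply the defining extension property to the inclusion homomorphism $T:X\to Z$: since $Y$ is separable and $X\subset Y$, separable $\mathcal{BL}$-injectivity yields a lattice homomorphism $\tilde T:Y\to Z$ with $\tilde T|_X=\mathrm{id}_X$. Setting $z=\tilde T(u)$ and using that lattice homomorphisms are positive, the inequalities $x_n\le u\le y_m$ holding in $Y$ transfer to $x_n=\tilde T(x_n)\le z\le\tilde T(y_m)=y_m$, which is exactly the required interpolation.

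The only nonroutine ingredient is the Dedekind completeness of the bidual, which supplies an ambient lattice in which the interpolating supremum genuinely exists; everything else is bookkeeping of separability and of the extension property. I expect the only point needing care is to verify cleanly that $X$ is a closed sublattice of the separable $Y\subset Z^{\ast\ast}$ and that the map being extended is a lattice homomorphism into $Z$, so that $\tilde T$ is order-preserving and the final chain of inequalities is legitimate.
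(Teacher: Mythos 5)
Your proposal is correct and follows essentially the same route as the paper: find the interpolating element in a Dedekind complete bidual (the paper uses $X^{**}$ for the separable sublattice $X$, you use $Z^{**}$, which makes no difference), generate a separable sublattice $Y$ containing $X$ and that element, and use separable $\mathcal{BL}$-injectivity to extend the inclusion $X\hookrightarrow Z$ to $Y$, pushing the interpolant back into $Z$. The passage to the increasing sequence $a_n=x_1\vee\cdots\vee x_n$ is a harmless extra step; the argument is sound as written.
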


\begin{proof}
 Let $(x_n)_{n\in\mathbb N},(y_m)_{m\in\mathbb N}\subset Z$ such that $x_n\leq y_m$ for every $m,n\in\mathbb N$. Let $X$ be the separable sublattice of $Z$ generated by $(x_n)_{n\in\mathbb N},(y_m)_{m\in\mathbb N}$. Since $X^{**}$ is Dedekind complete, in particular, there is $w\in X^{**}$ such that $x_n\leq w\leq y_m$ for every $m,n\in \mathbb N$. Let $Y$ be the separable sublattice of $X^{**}$ generated by $X$ and $w$. Let $\widetilde T:Y\rightarrow Z$ be a lattice homomorphism extending the formal inclusion $T:X\rightarrow Z$. Clearly, $z=\widetilde T w$ satisfies the requirements.
\end{proof}

The interpolation property is weaker than $\sigma$-Dedekind completeness (every countable set that is bounded above has a supremum). In fact, Proposition~\ref{notsigmadedekind} gives a separably $\mathcal{BL}$-injective Banach lattice that is not $\sigma$-Dedekind complete.

Our next goal is to show that all $\mathcal{BL}$-injective Banach lattices contain copies of $L_1[0,1]$ in every nonzero ideal. We need a lemma first that follows a similar approach as \cite[Lemma 5.1]{Kalton} (alternatively, see also \cite[Lemma 5.2.13]{M-N}).

\begin{lemma}\label{L_1 sublattice}
Let $(\Omega,\Sigma,\nu)$ be some finite measure space, $Z$ be a Banach lattice and $T:L_1(\nu)\rightarrow Z$ be a non-zero lattice homomorphism. Then there exists a nonzero measurable set $B\subset \Omega$ such that the restriction $T|_{L_1(B)}$ is a lattice isomorphism.
\end{lemma}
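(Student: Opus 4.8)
The plan is to reduce everything to a single positive functional coming from a norming element of $Z^\ast$, and then cut the domain to the set where the corresponding $L_\infty$-density is bounded away from $0$.

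First I would observe that $e := T(1_\Omega)$ is nonzero. Indeed, if $T(1_\Omega)=0$, then for every bounded $f$ we would have $0\leq |Tf| = T|f| \leq \|f\|_\infty\, T(1_\Omega) = 0$; since $\nu$ is finite, the bounded (simple) functions are dense in $L_1(\nu)$ and $T$ is continuous, so this would force $T=0$, contrary to hypothesis. Hence $\|e\|>0$. Because $e\geq 0$ and $Z$ is a Banach lattice, the positive Hahn--Banach theorem (\cite[Proposition 1.5.7]{M-N}) provides $z^\ast\in Z^\ast_+$ with $\|z^\ast\|=1$ and $z^\ast(e)=\|e\|$.

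Next I would transport $z^\ast$ through $T$. The composition $z^\ast\circ T$ is a bounded positive linear functional on $L_1(\nu)$, hence, via the identification $L_1(\nu)^\ast = L_\infty(\nu)$, it is represented by some $g\in L_\infty(\nu)$ with $g\geq 0$ a.e., so that $z^\ast(Tf)=\int f\,g\,d\nu$ for all $f$. The normalization yields
$$
\int_\Omega g\,d\nu = z^\ast(T1_\Omega) = z^\ast(e) = \|e\| > 0,
$$
so $g$ is not a.e.\ zero, and therefore there is some $\varepsilon>0$ for which the set $B:=\{g\geq\varepsilon\}$ has positive measure. I claim this $B$ works. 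For any $h\in L_1(B)$, using $|Th|=T|h|\geq 0$, $\|z^\ast\|=1$, and $g\geq\varepsilon$ on $B$,
$$
\|Th\|_Z = \|\,T|h|\,\|_Z \geq z^\ast(T|h|) = \int |h|\,g\,d\nu \geq \varepsilon\int_B |h|\,d\nu = \varepsilon\|h\|_1.
$$
Thus $T|_{L_1(B)}$ is bounded below; being a bounded lattice homomorphism on the complete space $L_1(B)$, it is injective with closed range, and is therefore a lattice isomorphism onto the sublattice $T(L_1(B))\subset Z$ (its inverse preserves the lattice operations as well).

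The main point -- rather than a genuine obstacle -- is the passage from the abstract homomorphism $T$ to the scalar density $g$: the positive norming functional $z^\ast$ converts $T$ into an honest element of $L_\infty(\nu)$, and the elementary comparison $\|Th\|_Z\geq z^\ast(Th)$ for $h\geq 0$ is exactly what lets a single dual element control the $Z$-norm \emph{from below} on the band where $g$ stays away from $0$. One should only be careful to choose $z^\ast$ \emph{positive}, so that $z^\ast\circ T\geq 0$ and the density $g$ is nonnegative, and to invoke the finiteness of $\nu$ both to guarantee $1_\Omega\in L_1(\nu)$ and for the density of simple functions used in the first step.
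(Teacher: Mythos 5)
Your proof is correct, and it takes a genuinely more elementary route than the paper's. Both arguments begin the same way: note that $e=T(1_\Omega)\neq 0$ and choose a positive norming functional $z^\ast\in Z^\ast_+$ with $\|z^\ast\|=1$ and $z^\ast(e)=\|e\|$. From there the paper factors $T$ through the Kakutani space of $z^\ast$, i.e.\ it forms $R=\psi_{z^\ast}T:L_1(\nu)\to L_1(\nu_{z^\ast})$, invokes Iwanik's theorem to represent $R$ as composition with a point map $\varphi$, defines the image measure $\mu_R(A)=\nu_{z^\ast}(\varphi^{-1}(A))$, and applies Radon--Nikod\'{y}m to get a density $g$ that is bounded below on some set $B$ of positive measure. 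You shortcut all of this: since $z^\ast\circ T$ is a positive functional on $L_1(\nu)$, the duality $L_1(\nu)^\ast=L_\infty(\nu)$ hands you the density $g$ directly, and the single inequality $\|Th\|_Z=\|T|h|\|_Z\geq z^\ast(T|h|)=\int |h|\,g\,d\nu\geq \varepsilon\|h\|_1$ for $h$ supported on $B=\{g\geq\varepsilon\}$ gives the lower bound. In fact the two densities coincide: in the paper's notation $\mu_R(A)=\int R\chi_A\,d\nu_{z^\ast}=z^\ast(T\chi_A)$ by the identity \eqref{functional-integral}, so your argument can be read as a streamlined version of the paper's in which the Kakutani/Iwanik machinery (and the $\sigma$-finiteness of $z^\ast$ from Lemma~\ref{sigmafinitenorming}) is recognized as unnecessary for this particular lemma. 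What the paper's route buys is coherence with its broader toolkit --- homomorphisms into $L_1$ spaces and Iwanik's point-map representation are used repeatedly elsewhere (e.g.\ in Proposition~\ref{L1 nosepiny}), and the point map carries extra structural information --- but for the statement at hand your direct duality argument is complete and shorter.
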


\begin{proof}
Let $T:L_1(\nu)\rightarrow Z$ be a non-zero lattice homomorphism. In particular, $z_0=T\chi_{\Omega}$ is positive and non-zero in $Z$, so we can consider $z_0^*\in Z^*_+$ such that $\|z_0^*\|=1$ and $z_0^*(z_0)=\|z_0\|>0$, and by Lemma \ref{sigmafinitenorming} we can assume $z_0^*$ is $\sigma$-finite. Let $\psi_{z_0^*}:Z\rightarrow L_1(\nu_{z_0^*})$ be the lattice homomorphism induced by Kakutani's theorem. Let now
$$
R=\psi_{z_0^*} T:L_1(\nu)\rightarrow L_1(\nu_{z_0^*}).
$$

Proceeding as in Lemma \ref{sigmafinitenorming}, we can assume (up to a lattice isomorphism) that $(\Omega_{z_0^*},\Sigma_{z_0^*},\nu_{z_0^*})$ is a finite measure space and
$$
R(\chi_{\Omega})=\chi_{\Omega_{z_0^*}}.
$$
Hence, \cite[Theorem 1]{Iwanik} yields that there is some measurable function $\varphi:\Omega_{z_0^*}\rightarrow \Omega$ such that
$$
Rf=f\circ \varphi
$$
for every $f\in L_1(\Omega)$. For every $A\in\Sigma$, considering $f=\chi_A$ and taking norms above, we get
$$
\nu_{z_0^*}(\varphi^{-1}(A))\leq \|R\| \nu(A).
$$
This allows us to define a measure $\mu_R$ on $(\Omega,\Sigma)$ as follows: for each $A\in\Sigma$, let
$$
\mu_R(A)=\nu_{z_0^*}(\varphi^{-1}(A))=\|R\chi_A\|_{L_1(\nu_{z_0^*})}.
$$
It is straightforward to check that $\mu_R$ is a $\sigma$-additive measure which is absolutely continuous with respect to $\nu$. Hence, by Radon-Nikodym theorem there is $g\in L_1(\nu)$ such that
$$
\mu_R(A)=\int_A g\ d\nu,
$$
for every $A\in \Sigma$.

Since $R(\chi_{\Omega})=\chi_{\Omega_{z_0^*}}$, we have that $\int_{\Omega} g\ d\nu=\nu_{z_0^*}(\Omega_{z_0^*})$. In particular, there must exist $B\in \Sigma$ with $\nu(B)>0$ and $\delta>0$ such that $g\chi_B\geq\delta$. It follows that for every measurable set $A\subset B$ we have
$$
\|R\chi_A\|_{L_1(\nu_{z_0^*})}=\mu_R(A)\geq \delta \nu(A).
$$
Therefore, by a standard argument using simple functions, it follows that
$$
\|Rf\|_{L_1(\nu_{z_0^*})}\geq \delta\|f\|_{L_1(\nu)},
$$
for every $f\in L_1(\nu)$ which is supported on $B$. In other words, the restriction
$$
R|_{L_1(B)}:L_1(B)\rightarrow L_1(\nu_{z_0^*})
$$
is a lattice isomorphism. As a consequence, if we set $Z_0=T(L_1(B))$, then $Z_0$ is a closed sublattice of $Z$ which is lattice isomorphic to $L_1(B)$.
\end{proof}

Given any non-zero cardinal $\kappa$, et $cons([0,1]^\kappa)$ be the one-dimensional sublattice of $L_1([0,1]^\kappa)$ consisting of the constant functions.

\begin{lemma}\label{dPWimproved}
	Let $\kappa$ be a cardinal and $Z$ a Banach lattice such that every  homomorphism $T:cons([0,1]^\kappa)\To Z$ extends to a homomorphism $\hat{T}:L_1([0,1]^\kappa)\To Z$. Then every ideal of $Z$ contains a lattice isomorphic copy of $L_1([0,1]^\kappa)$.
\end{lemma}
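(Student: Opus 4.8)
The plan is to reduce the statement to Lemma~\ref{L_1 sublattice} together with the Maharam homogeneity of the product measure. The first observation is that a lattice homomorphism $T:cons([0,1]^\kappa)\To Z$ amounts to the choice of a positive element. Indeed, $cons([0,1]^\kappa)$ is one-dimensional, spanned by the constant function $\mathbf 1$, and the requirement $T(|\mathbf 1|)=|T(\mathbf 1)|$ forces $T(\mathbf 1)\geq 0$; conversely every $z\in Z_+$ yields such a homomorphism via $\mathbf 1\mapsto z$. Thus the hypothesis says exactly that for each $z\in Z_+$ there is a lattice homomorphism $\hat T:L_1([0,1]^\kappa)\To Z$ with $\hat T(\mathbf 1)=z$.

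Now I would fix a nonzero ideal $J\subset Z$ and pick $0\neq z\in J_+$ (possible since $J$ is solid). Let $\hat T:L_1([0,1]^\kappa)\To Z$ be the extension given by the hypothesis; it is nonzero because $\hat T(\mathbf 1)=z\neq 0$. As the product Lebesgue measure $\mu$ on $[0,1]^\kappa$ is a probability measure, Lemma~\ref{L_1 sublattice} applies and produces a measurable set $B\subset[0,1]^\kappa$ with $\mu(B)>0$ such that $\hat T|_{L_1(B)}$ is a lattice isomorphism onto its (automatically closed) range. This range lies inside $J$: for bounded $f$ supported on $B$ one has $|\hat T f|=\hat T|f|\leq \|f\|_\infty\,\hat T(\mathbf 1_B)\leq \|f\|_\infty\, z\in J$, so $\hat T f\in J$ by solidity, and since bounded functions are dense in $L_1(B)$ and $J$ is closed we conclude $\hat T(L_1(B))\subset J$.

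It then remains to see that $L_1(B)$ carries a lattice isometric copy of $L_1([0,1]^\kappa)$; transporting it through the isomorphism $\hat T|_{L_1(B)}$ yields a lattice isomorphic copy inside $J$, finishing the proof. This is the crux, and it is precisely the statement that $\mu$ is \emph{Maharam homogeneous} of type $\kappa$: for every positive-measure $B$ the restriction $\mu|_B$ again has Maharam type $\kappa$, whence $L_1(\mu|_B)$ is lattice isometric to $L_1([0,1]^\kappa)$ by the classification in Theorem~\ref{Maharamclassfication} (see also Sections 14--15 of \cite{Lacey}). The mechanism I would sketch is a coordinate-dependence argument (for uncountable $\kappa$; the countable and finite cases reduce to the classical homogeneity of Lebesgue measure on $[0,1]$, using $L_1([0,1]^\kappa)\simeq L_1[0,1]$). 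Any generating family of the measure algebra of $B$ of size $\lambda<\kappa$ would, modulo null sets, depend only on a set $S$ of coordinates with $|S|<\kappa$, which we may take to contain the coordinates on which $B$ depends. Choosing $\beta\in\kappa\setminus S$ and the half-space $E=\{x:x_\beta<\tfrac12\}$, independence of $x_\beta$ from $\sigma(S)$ gives $\mathbb{E}[\mathbf 1_{E\cap B}\mid\sigma(S)]=\tfrac12\,\mathbf 1_B$, so the $\{0,1\}$-valued set $E\cap B$ is not $\sigma(S)$-measurable, contradicting that the family generates. Hence $\tau(\mu|_B)=\kappa$, and running the same argument on arbitrary positive-measure subsets of $B$ gives full homogeneity.

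The main obstacle is exactly this last step. Lemma~\ref{L_1 sublattice} only guarantees a faithful copy of $L_1(B)$ for some \emph{a priori} smaller piece $B\subset[0,1]^\kappa$, and the whole content lies in showing that this piece is already as rich as the entire cube; without homogeneity the lemma would only deliver a copy of some possibly smaller $L_1([0,1]^{\kappa'})$. Everything surrounding it --- the identification of homomorphisms out of $cons([0,1]^\kappa)$ and the solidity argument placing the copy inside $J$ --- is routine.
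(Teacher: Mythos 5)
Your proof is correct and is exactly the paper's argument: the paper proves this lemma with the single line ``It is a direct consequence of Lemma~\ref{L_1 sublattice}'', and your elaboration --- homomorphisms from $cons([0,1]^\kappa)$ correspond to positive elements, extend one sending $\mathbf 1$ to a positive element of the ideal, apply Lemma~\ref{L_1 sublattice} to get $B$, place the range inside the ideal by solidity and density, and invoke Maharam homogeneity of the product measure to identify $L_1(B)$ with $L_1([0,1]^\kappa)$ --- is precisely what that citation compresses, including the homogeneity step the paper leaves implicit. One caveat, inherited from the paper's wording rather than introduced by you: your density step, like every downstream use of this lemma in the paper (Theorems~\ref{sepinyL1}, \ref{sepinyL1omega1} and~\ref{noSobczyk}), tacitly requires the ideal to be closed and nonzero, and for non-closed ideals the statement can genuinely fail --- e.g.\ $L_\infty[0,1]$ is an ideal of $L_1[0,1]$, which satisfies the hypothesis for $\kappa=1$ (map $f\mapsto z\cdot(f\circ F)$ with $F$ the normalized distribution function of $z$), yet a closed-graph plus disjoint-sequence argument shows $L_\infty[0,1]$ contains no lattice isomorphic copy of $L_1[0,1]$.
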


\begin{proof}
 It is a direct consequence of Lemma~\ref{L_1 sublattice} and the fact that $L_1(B)$ is lattice isometric to $L_1([0,1]^\kappa)$ for every measurable set $B\subset [0,1]^\kappa$ of positive measure (cf. \cite{Maharam42}).	
\end{proof}

\begin{theorem}\label{dPWnoinjective}
	There exists no injective Banach lattice.
\end{theorem}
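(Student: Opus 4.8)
The plan is to run the unbounded-cardinality argument of de Pagter and Wickstead \cite[footnote 1]{dePW}, but routed through the ideal-saturation statement of Lemma~\ref{dPWimproved}. Suppose, towards a contradiction, that $Z$ is an injective Banach lattice, and fix an arbitrary infinite cardinal $\kappa$. The key observation is that the one-dimensional sublattice $cons([0,1]^\kappa)$ sits inside $L_1([0,1]^\kappa)$ as a closed sublattice; hence the (unrestricted) $\mathcal{BL}$-injectivity of $Z$, applied to this particular inclusion, says precisely that every homomorphism $T:cons([0,1]^\kappa)\To Z$ extends to a homomorphism $\hat{T}:L_1([0,1]^\kappa)\To Z$. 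This is exactly the hypothesis of Lemma~\ref{dPWimproved}.

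Applying Lemma~\ref{dPWimproved}, with $Z$ itself viewed as an ideal of $Z$, I would conclude that $Z$ contains a lattice isomorphic copy of $L_1([0,1]^\kappa)$. Since such a copy is in particular a closed subspace, this forces $dens(Z)\geq dens(L_1([0,1]^\kappa))$. The density character of $L_1([0,1]^\kappa)$ equals the Maharam type $\kappa$ of the product measure (cf. the classification in Theorem~\ref{Maharamclassfication}), so $dens(Z)\geq \kappa$. As $\kappa$ was an arbitrary infinite cardinal while $dens(Z)$ is a single fixed cardinal, choosing $\kappa>dens(Z)$ yields the contradiction, and therefore no injective Banach lattice exists.

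The bulk of the work is already packaged in Lemma~\ref{dPWimproved}, so the only point requiring a little care is the density computation $dens(L_1([0,1]^\kappa))=\kappa$ for infinite $\kappa$; I expect this to be the main (and essentially the only) obstacle, everything else being a direct unwinding of the definition of injectivity. It is worth noting that the argument in fact shows more: an injective object would have to contain copies of $L_1([0,1]^\kappa)$ inside \emph{every} nonzero ideal and for \emph{every} $\kappa$, which is what makes the failure so drastic and explains why the natural remedy pursued in the rest of the paper is to bound the cardinal $\kappa$ and pass to the separable version of injectivity.
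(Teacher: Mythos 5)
Your proposal is correct and is essentially the paper's own argument: the paper proves Theorem~\ref{dPWnoinjective} in one line by invoking Lemma~\ref{dPWimproved} to conclude that an injective Banach lattice would contain copies of $L_1([0,1]^\kappa)$ for all cardinals $\kappa$, leaving the resulting cardinality contradiction implicit. The density estimate you single out, $dens(L_1([0,1]^\kappa))\geq\kappa$ for infinite $\kappa$ (e.g.\ via a $\tfrac12$-separated family of indicators of cylinders on distinct coordinates), is exactly the standard fact that makes that implicit contradiction precise.
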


\begin{proof}
	By Lemma~\ref{dPWimproved}, it should contain copies of $L_1([0,1]^\kappa)$ for all cardinals $\kappa$.
\end{proof}

The original proof of Theorem~\ref{dPWnoinjective} by de Pagter and Wickstead \cite[footnote 1]{dePW} relies on the same homomorphism extension, but instead of Lemma~\ref{L_1 sublattice} they use a less informative cardinality argument.

\begin{theorem} \label{sepinyL1}
If $Z$ is a separably $\mathcal{BL}$-injective Banach lattice, then every nonzero ideal of $Z$ contains a lattice isomorphic copy of $L_1[0,1]$.
\end{theorem}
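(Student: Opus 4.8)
The plan is to obtain the statement as an essentially immediate specialization of Lemma~\ref{dPWimproved} to the cardinal $\kappa=\aleph_0$, combined with the identification $L_1([0,1]^{\aleph_0})\simeq L_1[0,1]$ recorded in the comments after Theorem~\ref{Maharamclassfication}. Thus the whole task reduces to checking that the hypothesis of Lemma~\ref{dPWimproved} is satisfied for $\kappa=\aleph_0$; once this is done, that lemma delivers a lattice isomorphic copy of $L_1([0,1]^{\aleph_0})\simeq L_1[0,1]$ inside every nonzero ideal of $Z$, which is exactly the conclusion sought.

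First I would verify that every lattice homomorphism $T:cons([0,1]^{\aleph_0})\To Z$ extends to a homomorphism $\hat T:L_1([0,1]^{\aleph_0})\To Z$. The key observation is that $cons([0,1]^{\aleph_0})$ is a one-dimensional sublattice of $L_1([0,1]^{\aleph_0})$: it is spanned by the constant function $1$ and is closed under the lattice operations, since $c_1\cdot 1\vee c_2\cdot 1=\max\{c_1,c_2\}\cdot 1$ and similarly for $\wedge$ and $|\cdot|$. Being one-dimensional it is a closed, hence separable, sublattice, and $L_1([0,1]^{\aleph_0})\simeq L_1[0,1]$ is itself separable. Therefore the inclusion $cons([0,1]^{\aleph_0})\subset L_1([0,1]^{\aleph_0})$ is precisely an inclusion of a closed sublattice into a separable Banach lattice, so the defining property of separable $\mathcal{BL}$-injectivity of $Z$ produces the desired extension $\hat T$. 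With the hypothesis of Lemma~\ref{dPWimproved} in place, I would invoke it to conclude that every nonzero ideal of $Z$ contains a lattice isomorphic copy of $L_1([0,1]^{\aleph_0})$, and then rewrite $L_1([0,1]^{\aleph_0})\simeq L_1[0,1]$.

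The genuine mathematical content lies in the earlier machinery being invoked, so if one argued from scratch the main obstacle would be Lemma~\ref{L_1 sublattice}: given a nonzero lattice homomorphism out of an $L_1$ space one must locate a set $B$ of positive measure on which the homomorphism is bounded below. That step uses Iwanik's representation of such homomorphisms as composition operators $f\mapsto f\circ\varphi$ together with a Radon--Nikodym argument yielding $\|\hat T f\|\geq\delta\|f\|$ for $f$ supported on $B$, and the bookkeeping ensuring the resulting copy lands in the prescribed ideal is likewise internal to Lemma~\ref{dPWimproved}. Since all of these results precede the statement, the proof here is short, and the only verification demanded is the elementary observation that $cons([0,1]^{\aleph_0})$ is a closed sublattice of the separable lattice $L_1([0,1]^{\aleph_0})$, so that separable $\mathcal{BL}$-injectivity applies directly.
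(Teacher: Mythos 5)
Your proposal is correct and is exactly the paper's argument: the paper proves this theorem by invoking Lemma~\ref{dPWimproved}, with the (implicit) verification that separable $\mathcal{BL}$-injectivity supplies the required extension because $cons([0,1]^{\aleph_0})$ is a closed sublattice of the separable lattice $L_1([0,1]^{\aleph_0})\simeq L_1[0,1]$. Your write-up simply makes explicit the routine check that the paper leaves to the reader.
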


\begin{proof}
	A direct consequence of Lemma~\ref{dPWimproved}.
\end{proof}

Theorem~\ref{sepinyL1} can be improved when $Z$ is 1-separably $\mathcal{BL}$-injective. For this, we observe a phenomenon that  parallels what happens in Banach spaces~\cite[Lemma 2.28]{sepiny}: 1-separable injectivity implies \emph{separable-to-$\aleph_1$} injectivity.

\begin{lemma}\label{l:omega1}
	Suppose $Z$ is a 1-separably $\mathcal{BL}$-injective Banach lattice. For every Banach lattice $Y$ with $dens(Y)=\aleph_1$, and every lattice homomorphism $T:X\rightarrow Z$ with $X$ a sublattice of $Y$, there is a lattice homomorphism $\tilde T:Y\rightarrow Z$ extending $T$ with $\|\tilde T\|=\|T\|$.
\end{lemma}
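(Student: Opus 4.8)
The plan is to prove the statement by transfinite induction of length $\omega_1$, building the extension on a continuous increasing chain of separable sublattices whose union is dense in $Y$, in close analogy with the Banach space argument of \cite[Lemma 2.28]{sepiny} but replacing the free linear gluing available there by the amalgamation machinery of Section~\ref{s:amalgamation}. After rescaling we may assume $\|T\|=1$. First I would fix continuous increasing chains of separable sublattices $(Y_\alpha)_{\alpha<\omega_1}$ of $Y$ and $(X_\alpha)_{\alpha<\omega_1}$ of $X$, with $X=\overline{\bigcup_\alpha X_\alpha}$ and $Y=\overline{\bigcup_\alpha Y_\alpha}$, arranged by a standard closing-off (elementary submodel) argument so that $X_\alpha\subset Y_\alpha$ and, crucially, $Y_\alpha\cap X=X_\alpha$ for every $\alpha$. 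The goal of the induction is to produce lattice homomorphisms $\tilde T_\alpha\colon Y_\alpha\To Z$ forming a compatible (continuous) chain, with $\|\tilde T_\alpha\|\le 1$ and $\tilde T_\alpha|_{X_\alpha}=T|_{X_\alpha}$; the desired $\tilde T$ is then the unique continuous extension of $\bigcup_\alpha\tilde T_\alpha$ to $Y$, which restricts to $T$ on the dense set $\bigcup_\alpha X_\alpha$ and hence on all of $X$, with $\|\tilde T\|=1$.

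The heart of the argument is the successor step, where the contribution of the amalgamation results becomes essential. Given $\tilde T_\alpha\colon Y_\alpha\To Z$ we must produce $\tilde T_{\alpha+1}\colon Y_{\alpha+1}\To Z$ that simultaneously extends $\tilde T_\alpha$ on $Y_\alpha$ and agrees with $T$ on $X_{\alpha+1}$, these two requirements being consistent since they coincide on $Y_\alpha\cap X_{\alpha+1}=X_\alpha$. To realize this I would first form, using Theorem~\ref{push-out}, the isometric push-out $PO$ of the two isometric embeddings $X_\alpha\hookrightarrow Y_\alpha$ and $X_\alpha\hookrightarrow X_{\alpha+1}$; by Theorem~\ref{parallelisometries} the canonical maps $\beta_1\colon Y_\alpha\To PO$ and $\beta_2\colon X_{\alpha+1}\To PO$ are isometric lattice embeddings, and by the push-out universal property the pair $(\tilde T_\alpha,T|_{X_{\alpha+1}})$ induces a homomorphism $\psi\colon PO\To Z$ of norm $\le 1$ with $\psi\beta_1=\tilde T_\alpha$ and $\psi\beta_2=T|_{X_{\alpha+1}}$. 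Since $PO$ is separable, the plan is then to transfer $\psi$ to the genuinely larger separable lattice $Y_{\alpha+1}$, amalgamating $PO$ with $Y_{\alpha+1}$ over the common separable sublattice and invoking $1$-separable $\mathcal{BL}$-injectivity of $Z$ to obtain a norm-one homomorphism $\tilde T_{\alpha+1}\colon Y_{\alpha+1}\To Z$ extending $\psi$ along the relevant embedding; one then checks that $\tilde T_{\alpha+1}$ restricts to $\tilde T_\alpha$ on $Y_\alpha$ and to $T$ on $X_{\alpha+1}$.

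The limit steps are routine: at a limit $\alpha$ one sets $\tilde T_\alpha$ to be the continuous extension of $\bigcup_{\beta<\alpha}\tilde T_\beta$ to $Y_\alpha=\overline{\bigcup_{\beta<\alpha}Y_\beta}$, which is automatically a norm-one homomorphism agreeing with $T$ on $X_\alpha$, so compatibility and the norm bound pass to the limit. I expect the main obstacle to be precisely the \emph{compatible gluing} at the successor step: in the Banach space setting the maps $\tilde T_\alpha$ and $T|_{X_{\alpha+1}}$ glue to a bounded linear map on the algebraic sum for free, whereas in $\mathcal{BL}$ two homomorphisms that agree on the intersection of their domains need not extend to a homomorphism on the sublattice they generate inside $Y_{\alpha+1}$ (the lattice operations can force relations, e.g. disjointness, that their images in $Z$ do not satisfy). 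The delicate point is therefore to organize the push-out together with the embedding $Y_\alpha\hookrightarrow Y_{\alpha+1}$ so that a single application of Theorem~\ref{parallelisometries} keeps the direction towards $Z$ isometric, allowing $1$-separable injectivity to be applied, while guaranteeing that the resulting extension genuinely restricts to $T$ on the new part $X_{\alpha+1}\setminus X_\alpha$; verifying this last compatibility is where the care is needed and is the substitute for the free linear merging of the Banach space proof.
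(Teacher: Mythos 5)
Your overall architecture (interleaved chains, transfinite induction, push-outs) is reasonable, but the successor step --- which you yourself flag as ``where the care is needed'' --- is never actually carried out, and as sketched it cannot be. After forming the push-out $PO$ of $Y_\alpha \hookleftarrow X_\alpha \hookrightarrow X_{\alpha+1}$ and the induced $\psi\colon PO\To Z$, you propose to ``amalgamate $PO$ with $Y_{\alpha+1}$ over the common separable sublattice'' and then apply $1$-separable $\mathcal{BL}$-injectivity. But $PO$ and $Y_{\alpha+1}$ share \emph{two} relevant sublattices, $Y_\alpha$ and $X_{\alpha+1}$, and the amalgamation corollary following Theorem~\ref{parallelisometries} amalgamates over \emph{one} common sublattice only. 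Amalgamating over $Y_\alpha$ produces $W$ containing $PO$ and $Y_{\alpha+1}$ with the two copies of $Y_\alpha$ identified but \emph{not} the two copies of $X_{\alpha+1}$; extending $\psi$ from $PO$ to $W$ by injectivity and restricting to $Y_{\alpha+1}$ then agrees with $\tilde T_\alpha$ on $Y_\alpha$ but gives no control on $X_{\alpha+1}$ (symmetrically if you amalgamate over $X_{\alpha+1}$, you lose compatibility with $\tilde T_\alpha$). Securing both identifications at once would require a lattice containing $PO$ and $Y_{\alpha+1}$ compatibly over both legs, which is exactly the gluing problem you started with, not a reduction of it.

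Moreover, the step can genuinely fail under the induction hypothesis you maintain (namely, only that $\tilde T_\alpha$ agrees with $T$ on $X_\alpha$). The canonical map $PO\To Y_{\alpha+1}$ is in general not injective: there may be positive $y\in Y_\alpha$ and $x\in X_{\alpha+1}$ with $y\wedge x=0$ in $Y$ while $\beta_1(y)\wedge\beta_2(x)\neq 0$ in $PO$, since the push-out sees only the abstract diagram over $X_\alpha$ and not the ambient relations in $Y_{\alpha+1}$. Nothing in the earlier stages of your construction prevents $\tilde T_\alpha(y)\wedge T(x)\neq 0$ in $Z$ for such a pair, and if that happens then \emph{no} lattice homomorphism on $Y_{\alpha+1}$ (nor on any lattice containing it) can restrict to $\tilde T_\alpha$ on $Y_\alpha$ and to $T$ on $X_{\alpha+1}$: it would have to send $y\wedge x=0$ to a nonzero element. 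So the obstruction is real, not merely unverified. The paper's own proof avoids all of this by building a \emph{single} chain of separable sublattices starting at $X_0=X$, so that $X$ sits inside every $X_\alpha$, compatibility with $T$ is automatic, and each successor step is a bare application of $1$-separable $\mathcal{BL}$-injectivity to the separable pair $X_\alpha\subset X_{\alpha+1}$ --- no gluing and no push-out. (This uses that $X$ is separable, which is the case in every application in the paper, e.g.\ the one-dimensional $cons([0,1]^{\aleph_1})$ in Theorem~\ref{sepinyL1omega1}; your interleaved chains would only be needed for non-separable $X$, but then the gap above has to be closed, and neither your argument nor the paper's covers that case.)
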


\begin{proof}
	Let $T_0=T:X\rightarrow Z$ be a lattice homomorphism. We can write $Y$ as an increasing union of separable sublattices $(X_\alpha)_{\alpha<\omega_1}$ with $X_0=X$ and $X_\xi=\overline{\bigcup_{\alpha<\xi} X_\alpha}$ for every limit ordinal $\xi$. Since $Z$ is 1-separably $\mathcal{BL}$-injective, for every $\alpha<\omega_1$, there is an extension of $T_\alpha:X_\alpha\rightarrow Z$ to a lattice homomorphism $T_{\alpha+1}:X_{\alpha+1}\rightarrow Z$ with $\|T_{\alpha+1}\|=\|T\|$. Also, for every limit ordinal $\xi$ we have extensions $T_\xi:X_{\xi}\rightarrow Z$ with $\|T_{\xi}\|=\|T\|$. Hence, by transfinite induction we get a lattice homomorphism extension $\tilde T:Y\rightarrow Z$ with $\|\tilde T\|=\|T\|$.
\end{proof}

\begin{theorem}\label{sepinyL1omega1}
	If $Z$ is a $1$-separably $\mathcal{BL}$-injective Banach lattice, then every nonzero ideal of $Z$ contains a lattice isomorphic copy of $L_1([0,1]^{\aleph_1})$.
\end{theorem}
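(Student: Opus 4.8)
The plan is to reduce the statement to Lemma~\ref{dPWimproved} applied with the cardinal $\kappa = \aleph_1$. That lemma says that if every lattice homomorphism $T:cons([0,1]^{\aleph_1})\To Z$ extends to a homomorphism $\hat{T}:L_1([0,1]^{\aleph_1})\To Z$, then every nonzero ideal of $Z$ contains a lattice isomorphic copy of $L_1([0,1]^{\aleph_1})$ --- which is exactly the desired conclusion. So the whole task is to verify this one extension property.

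To produce the extension I would invoke the separable-to-$\aleph_1$ injectivity encapsulated in Lemma~\ref{l:omega1}. The key observation is that $Y:=L_1([0,1]^{\aleph_1})$ has density character exactly $\aleph_1$: the product measure on $[0,1]^{\aleph_1}$ has Maharam type $\aleph_1$, so $L_1([0,1]^{\aleph_1})$ is a Banach lattice of density $\aleph_1$. Meanwhile $X:=cons([0,1]^{\aleph_1})$ is the one-dimensional sublattice of constant functions, hence in particular a (separable) sublattice of $Y$. Lemma~\ref{l:omega1} then applies verbatim: any lattice homomorphism $T:X\To Z$ extends to some $\tilde{T}:Y\To Z$ (indeed with $\|\tilde{T}\|=\|T\|$, although for the present purpose only the bare existence of an extension is needed).

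Thus the hypothesis of Lemma~\ref{dPWimproved} holds with $\kappa=\aleph_1$, and the conclusion follows at once. The only point that genuinely requires attention is the verification that $dens\big(L_1([0,1]^{\aleph_1})\big)=\aleph_1$, since this is precisely what licenses the use of Lemma~\ref{l:omega1}; everything else is a direct concatenation of the two lemmas. In this sense there is no real obstacle in the present statement: the substantive work has already been carried out in establishing Lemma~\ref{l:omega1} (the transfinite induction that upgrades $1$-separable injectivity to separable-to-$\aleph_1$ injectivity) and Lemma~\ref{dPWimproved} (which rests on the structural Lemma~\ref{L_1 sublattice} about homomorphisms out of $L_1$ spaces). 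The theorem is the clean payoff of combining these ingredients at the level $\kappa=\aleph_1$, exactly as Theorem~\ref{sepinyL1} was the payoff at the separable level $\kappa=\aleph_0$.
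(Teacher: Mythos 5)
Your proof is correct and is essentially the paper's own argument: the paper proves this theorem by combining Lemma~\ref{l:omega1} with Lemma~\ref{L_1 sublattice}, and your route through Lemma~\ref{dPWimproved} (which is itself a direct consequence of Lemma~\ref{L_1 sublattice}) is the same combination, merely packaged through the intermediate lemma. Your verification that $\operatorname{dens}\bigl(L_1([0,1]^{\aleph_1})\bigr)=\aleph_1$, so that Lemma~\ref{l:omega1} applies, is exactly the point the paper leaves implicit.
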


\begin{proof}
Combine Lemma~\ref{l:omega1} with Lemma~\ref{L_1 sublattice}.
\end{proof}

We do not know whether the injectivity constant $1$ is really necessary in Theorem~\ref{sepinyL1omega1}. In fact, we do not know whether every separably $\mathcal{BL}$-injective Banach lattice is  isomorphic to a 1-separably $\mathcal{BL}$-injective Banach lattice.

A corollary of Theorem~\ref{sepinyL1omega1} is that 1-separably $\mathcal{BL}$-injective Banach lattices must be nonseparable. Getting the result without that annoying $1$ is the goal of the next section.

\section{There are no separably $\mathcal{BL}$-injective separable Banach lattices}\label{s:sepinyBLnonsep}

Throughout this section, $L_1=L_1[0,1]$ will denote the space of Lebesgue integrable real functions on the interval $[0,1]$ equipped with the Lebesgue measure $m$. Let us start with the fact that $L_1$ cannot be separably injective. The idea of the proof is, in a certain way, a higher dimensional analogue of the fact that there are no non-zero lattice homomorphisms from $L_1$ to $\mathbb R$.

\begin{proposition}\label{L1 nosepiny}
$L_1[0,1]$ is not separably injective.
\end{proposition}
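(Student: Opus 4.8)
The plan is to read \emph{separably injective} in the sense of \cite{sepiny}, namely the extension of arbitrary \emph{bounded linear} operators (no compatibility with the lattice operations is imposed), and to reduce the statement to a complementation problem. Since $L_1$ is separable, by the Banach--Mazur theorem I would fix an isometric linear embedding $J\colon L_1\To C[0,1]$ into the separable space $C[0,1]$ and set $E=J(L_1)$. If $L_1$ were separably injective, then applying the extension property to the subspace $E\subset C[0,1]$ and the operator $J^{-1}\colon E\To L_1$ would yield a bounded operator $\hat T\colon C[0,1]\To L_1$ with $\hat T\circ J=\mathrm{id}_{L_1}$, so that $P:=J\circ\hat T$ would be a bounded projection of $C[0,1]$ onto the isometric copy $E$ of $L_1$. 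Thus it suffices to show that no isometric copy of $L_1$ is complemented in $C[0,1]$.

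To exclude such a projection I would invoke two classical facts. First, $L_1$ contains no isomorphic copy of $c_0$: this is the Banach-space counterpart of the nonatomic structure that already forbids lattice homomorphisms $L_1\To\mathbb R$, and concretely it is the weak sequential completeness of $L_1$ (a copy of $c_0$ would carry a bounded increasing sequence with no weak limit). Second, by Pe\l czy\'nski's theorem every bounded operator from a $C(K)$-space into a Banach space containing no copy of $c_0$ is weakly compact. Hence every bounded operator $C[0,1]\To L_1$, and in particular the hypothetical projection $P$, is weakly compact.

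The contradiction is then immediate. Restricting the weakly compact operator $P$ to $E$ gives the identity of $E$, so that $B_E=P(B_E)\subset P(B_{C[0,1]})$ is relatively weakly compact; since $E$ is closed, $B_E$ is weakly compact and $E\cong L_1$ would be reflexive, which it is not. Therefore $L_1$ is uncomplemented in $C[0,1]$ and cannot be separably injective.

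The main obstacle, and the only non-elementary input, is the weak compactness of operators $C[0,1]\To L_1$; this is precisely the step where the absence of a copy of $c_0$ in $L_1$ (the higher-dimensional incarnation of the nonexistence of lattice homomorphisms into the scalars) does the work, so the argument also explains \emph{why} the nonatomicity of $L_1$ is responsible for the failure. As an alternative, one could derive the conclusion at once from Zippin's theorem \cite{Zippin}, by which a separable separably injective Banach space must be isomorphic to $c_0$ while $L_1$ is not; I prefer the route above because it is self-contained modulo standard $C(K)$-operator theory.
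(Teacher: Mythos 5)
There is a genuine gap, and it is the very first sentence: you have proved the wrong statement. In this paper, Proposition~\ref{L1 nosepiny} asserts that $L_1[0,1]$ is not separably $\mathcal{BL}$-injective, i.e.\ that there exist separable Banach lattices $X\subset Y$ and a \emph{lattice homomorphism} $T:X\To L_1$ with no \emph{lattice homomorphism} extension to $Y$. The Banach-space notion from \cite{sepiny} that you use instead is incomparable with this: the lattice notion asks for extensions of fewer maps (only homomorphisms defined on sublattices), but the extensions must lie in a smaller class (homomorphisms), so neither property implies the other. Concretely, your scheme collapses at the start: the Banach--Mazur embedding $J:L_1\To C[0,1]$ is merely a linear isometry, $J(L_1)$ is not a sublattice of $C[0,1]$ and $J^{-1}$ is not a lattice homomorphism, so separable $\mathcal{BL}$-injectivity gives you no extension $\hat T$ and no projection $P$. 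The same category mismatch invalidates your proposed shortcut via Zippin's theorem \cite{Zippin}, which classifies separably injective Banach \emph{spaces}; the whole point of Section~\ref{s:sepinyBLnonsep} is that no such classification is available in $\mathcal{BL}$ and the lattice analogue (Theorem~\ref{noSobczyk}) must be proved from scratch, with this proposition as its first step. That the lattice question genuinely resists a complementation argument of your type is shown by the separably universal lattice $C(\Delta,L_1)$ of \cite{LLOT}: $L_1$ embeds there lattice isometrically as the constant functions, yet evaluation at any point of $\Delta$ composed with this embedding \emph{is} a norm-one lattice homomorphism projection onto that copy, so no contradiction can be extracted from that embedding alone; this is why Theorem~\ref{noSobczyk} requires the full analysis of the ideals of $C(\Delta,L_1)$.

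The paper's actual proof is a two-variable trick, which it describes as a higher-dimensional analogue of the nonexistence of lattice homomorphisms $L_1\To\mathbb{R}$ (an intuition your last paragraph correctly gropes toward, but never converts into a lattice-homomorphism obstruction). Take $X\subset L_1([0,1]^2)$ the closed sublattice of functions depending only on the first coordinate, and $T:X\To L_1[0,1]$ the obvious identification, a lattice isometry. If $\hat T:L_1([0,1]^2)\To L_1[0,1]$ were a lattice homomorphism extension, then since $\hat T\chi_{[0,1]^2}=\chi_{[0,1]}$, Iwanik's theorem \cite{Iwanik} represents it as $\hat T f=f\circ\varphi$ for a measurable $\varphi:[0,1]\To[0,1]^2$ satisfying $m(\varphi^{-1}(A))\leq\|T\|\,m^{(2)}(A)$ for all measurable $A$. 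Applying $\hat T$ to $f(x,y)=x$ forces $\varphi(x)=(x,\varphi_2(x))$ almost everywhere, so (after using Lusin's theorem and analyticity \cite{Kechris} to make it measurable) the image $A=\varphi([0,1])$ is a graph, hence $m^{(2)}$-null by Fubini, while $\varphi^{-1}(A)=[0,1]$ has measure one --- contradicting the absolute-continuity estimate. If you wish to rescue your write-up, this is the replacement you need: a sublattice, a homomorphism, and an obstruction (here measure-theoretic, via Iwanik and Fubini) to extending it as a homomorphism; weak compactness of operators out of $C(K)$-spaces plays no role in the lattice problem.
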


\begin{proof}
Consider $L_1([0,1]^2)$ the space of integrable functions on the square $[0,1]^2$ equipped with Lebesgue measure $m^{(2)}$. Let $X$ be the closed sublattice of $L_1([0,1]^2)$ consisting of those functions $f:[0,1]^2\rightarrow \mathbb R$ such that $f(x,y)=f(x)$ for $m$-almost every $x,y\in [0,1]$. Let
$$
T:X\rightarrow L_1[0,1]
$$
be the lattice homomorphism given by identifying each $f\in X$ with $f(\cdot,y)$ for those $y$ where it is well defined. If $L_1[0,1]$ were separably injective, then there would be a lattice homomorphism
$$
\hat T:L_1[0,1]^2\rightarrow L_1[0,1]
$$
extending $T$. Let us see that this is impossible.

By \cite{Iwanik}, as $\hat T \chi_{[0,1]^2}=\chi_{[0,1]}$, there exists a measurable function $\varphi:[0,1]\rightarrow[0,1]^2$ such that
\begin{equation}\label{eq:mu}
    m(\varphi^{-1}(A))\leq \|T\|m^{(2)}(A)
\end{equation}
for every measurable set $A\subset[0,1]^2$ and
$$
\hat{T}f=f\circ \varphi
$$
for every $f\in L_1([0,1]^2)$. Note that $\varphi$ must be of the form $\varphi(x)=(\varphi_1(x),\varphi_2(x))$ with $\varphi_1(x)=x$ for almost every $x\in [0,1]$ and $\varphi_2:[0,1]\rightarrow [0,1]$ is measurable (just take the function $f(x,y)=x$ which belongs to $X$, so that $x=Tf(x)=\varphi_1(x)$ for almost every $x\in[0,1]$).

By Lusin's theorem (cf. \cite[Theorem 17.12]{Kechris}) the Lebesgue-measurable function $\varphi$ coincides almost everywhere with a Borel function (Lusin's theorem, as stated in the reference, gives for every $\varepsilon>0$, a closed set $F_\varepsilon\subset [0,1]$ with $\lambda(F_\varepsilon)>1-\varepsilon$ such that $\varphi|_{F_\varepsilon}$ is continuous; but then $\varphi\cdot \chi_{\bigcup_n F_{1/n}}$ is a Borel function that coincides with $\varphi$ almost everywhere). Hence, we can assume that the set $A=\varphi([0,1])$ is analytic, being the image of a Polish space under a Borel function, so in particular, $A$ is measurable (\cite[Proposition 14.4 and Theorem 21.10]{Kechris}). Note that $A\cap\{x\}\times[0,1]=\{(x,\varphi_2(x))\}$ has measure zero for almost every $x\in[0,1]$, thus by Fubini's theorem it follows that $A$ has measure zero. However, $\varphi^{-1}(A)=[0,1]$ which has measure 1. This is a contradiction with \eqref{eq:mu}.
\end{proof}

It was shown in \cite{LLOT} that $L_1$-valued continuous functions on the Cantor space, $C(\Delta,L_1)$, is a separable universal lattice, that is, every separable Banach lattice embeds lattice isometrically into $C(\Delta,L_1)$. In particular, if $Z$ were a separable, separably $\mathcal{BL}$-injective Banach lattice, then it would embed lattice isometrically into $C(\Delta,L_1)$ and by injectivity, $Z$ would be the range of a lattice homomorphism projection in $C(\Delta,L_1)$. The kernel of this projection must be an ideal in $C(\Delta,L_1)$, so let us first see how these ideals look like.

In what follows let us set $J$ to be a closed ideal of $C(\Delta,L_1)$.

\begin{lemma}\label{l:claim}
Given $g\in L_1$, $\varepsilon>0$ and $\omega\in \Delta$, suppose there is $f\in J$ such that $\|g-f(\omega)\|_1<\varepsilon$. Then there exists $f_1\in C(\Delta,L_1)$ and $f_2\in J$ such that $f_1(\omega)=g$ and
$$
\|f_1-f_2\|<2\varepsilon.
$$
\end{lemma}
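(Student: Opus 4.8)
The plan is to localize the problem near $\omega$ using the totally disconnected structure of the Cantor set $\Delta$. The function $\omega'\mapsto \|g-f(\omega')\|_1$ is a continuous real-valued function on $\Delta$ (composition of the continuous map $f$ with the norm) that is strictly below $\varepsilon$ at $\omega'=\omega$. Since $\Delta$ is zero-dimensional, it has a basis of clopen sets, so I would first choose a clopen neighbourhood $U$ of $\omega$ on which this quantity stays below $2\varepsilon$ (in fact below $\varepsilon$, but the looser bound is all that is needed).

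With $U$ fixed, I would set $f_1=\chi_U\,g$, the element of $C(\Delta,L_1)$ taking the value $g$ on $U$ and $0$ elsewhere. This is genuinely continuous precisely because $U$ is clopen, so that $\chi_U$ is a continuous scalar function, and clearly $f_1(\omega)=g$ since $\omega\in U$. For the approximant I would take $f_2=\chi_U\,f$. The key point is that $f_2$ lands in $J$: since $|\chi_U f|=\chi_U|f|\le |f|$ and $J$ is a (solid) closed ideal containing $f$, solidity forces $\chi_U f\in J$. This is the only place where the hypothesis that $J$ is an ideal enters, and it is the structural heart of the argument.

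Finally the estimate is routine: for every $\omega'\in\Delta$ one has $(f_1-f_2)(\omega')=\chi_U(\omega')\big(g-f(\omega')\big)$, which vanishes off $U$ and equals $g-f(\omega')$ on $U$, so $\|f_1-f_2\|=\sup_{\omega'\in U}\|g-f(\omega')\|_1<2\varepsilon$ by the choice of $U$. The only genuinely delicate step is the very first one: one must produce a neighbourhood of $\omega$ that is simultaneously clopen (so that $\chi_U$ is continuous and both $f_1$ and $f_2$ remain in $C(\Delta,L_1)$) and small enough that continuity of $f$ keeps $\|g-f(\omega')\|_1$ controlled on all of it. Both requirements are met at once by zero-dimensionality of $\Delta$, which supplies arbitrarily small clopen neighbourhoods of $\omega$; the factor $2$ is merely slack absorbed from this continuity argument. (As a sanity check, one can even avoid localizing by instead perturbing $f$ itself: taking $f_1=f+(g-f(\omega))$ and $f_2=f\in J$ also works and yields the sharper bound $\varepsilon$, which confirms that the stated $2\varepsilon$ leaves comfortable room.)
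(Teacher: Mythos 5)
Your main argument is correct and is essentially the paper's own proof: you localize at $\omega$ with a cutoff multiplier, set $f_1=(\text{cutoff})\cdot g$ and $f_2=(\text{cutoff})\cdot f$, and invoke solidity of the closed ideal $J$ (via $|f_2|\leq|f|$) to keep $f_2$ inside $J$, with the same sup-norm estimate at the end. The only difference is cosmetic: you take the cutoff to be the indicator $\chi_U$ of a small clopen neighbourhood, exploiting zero-dimensionality of $\Delta$, whereas the paper uses a continuous Urysohn function $\varphi$ with $0\leq\varphi\leq 1$, $\varphi(\omega)=1$, vanishing off a small ball; the paper's variant has the minor advantage of working verbatim on any compact metric space, but on the Cantor set both are equally valid.

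One further remark: your closing parenthetical is in fact a complete and strictly simpler proof of the lemma as stated. Taking $f_1=f+\overline{g-f(\omega)}$, where $\overline{h}$ denotes the constant function with value $h$, and $f_2=f\in J$, one gets $f_1\in C(\Delta,L_1)$, $f_1(\omega)=g$, and
$$
\|f_1-f_2\|=\sup_{t\in\Delta}\|g-f(\omega)\|_1=\|g-f(\omega)\|_1<\varepsilon,
$$
so no localization (and no factor $2$) is needed. The localized construction produces an $f_1$ supported near $\omega$, but that extra feature is never used: the application in Lemma~\ref{l:Jomega} only requires the statement as given. So your ``sanity check'' is not merely slack-checking; it shows the lemma admits a one-line proof.
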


\begin{proof}
Let us denote by $d(\cdot,\cdot)$ a distance in $\Delta$ inducing its topology. Since $f\in J\subset C(\Delta,L_1)$ there exists $\delta>0$ such that for every $t\in \Delta$ with $d(\omega,t)<\delta$ we have
$$
\|f(t)-f(\omega)\|_1<\varepsilon.
$$
Let $\varphi\in C(\Delta)$ be a continuous function such that $0\leq \varphi\leq 1$, $\varphi(\omega)=1$ and $\varphi(s)=0$ for every $s\in \Delta$ with $d(s,\omega)\geq\delta$. Now, let $f_1,f_2\in C(\Delta,L_1)$ be given for $t\in \Delta$ by
$$
f_1(t)=\varphi(t)g,\quad\quad f_2(t)=\varphi(t)f(t).
$$
Clearly, since $|f_2|\leq |f|$, we have that $f_2\in J$. Also,
$$
f_1(\omega)=\varphi(\omega)g=g.
$$
Finally, we have
\begin{align*}
\|f_1-f_2\|&=\sup_{t\in\Delta}\|\varphi(t)|g-f(t)|\|_1=\sup_{d(t,\omega)<\delta}\|\varphi(t)|g-f(t)|\|_1\\
&\leq\sup_{d(t,\omega)<\delta}\|g-f(t)\|_1\leq \sup_{d(t,\omega)<\delta}\|g-f(\omega)\|_1+\|f(\omega)-f(t)\|_1\\
&<2\varepsilon.
\end{align*}
\end{proof}

For each $\omega\in\Delta$ let
$$
J_\omega=\{g\in L_1:\forall \varepsilon>0,\,\exists f\in J, \,\|g-f(\omega)\|_1<\varepsilon\}.
$$

\begin{lemma}\label{l:Jomega}
For every $\omega\in \Delta$, $J_\omega$ is a closed ideal of $L_1$ which can also be described as
$$
J_\omega=\{g\in L_1:\forall \varepsilon>0,\,\exists f\in C(\Delta,L_1),\,\exists \tilde f\in J,\,f(\omega)=g, \,\|f-\tilde f\|<\varepsilon\}.
$$
\end{lemma}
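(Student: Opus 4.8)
The plan is to identify $J_\omega$ with the norm–closure of the image of $J$ under the evaluation map $\mathrm{ev}_\omega:C(\Delta,L_1)\To L_1$, $f\mapsto f(\omega)$, and then to read off closedness, linearity, solidity, and the alternative description from this identification. Unravelling the defining condition of $J_\omega$ gives at once $J_\omega=\overline{\mathrm{ev}_\omega(J)}$. Since the lattice operations on $C(\Delta,L_1)$ are computed pointwise, $\mathrm{ev}_\omega$ is a linear, contractive lattice homomorphism.

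From this description, closedness and the vector-space structure are immediate: $J$ is a linear sublattice of $C(\Delta,L_1)$, so its image $\mathrm{ev}_\omega(J)$ is a linear sublattice of $L_1$, and as the lattice operations and linear operations are norm–continuous, the closure $J_\omega$ is a closed linear sublattice. In particular $g\in J_\omega$ implies $|g|\in J_\omega$, which will let me reduce the proof of the ideal property to the positive case via $h=h^+-h^-$ with $0\le h^\pm\le|h|\le|g|$.

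The substantive step is solidity, i.e.\ that $0\le h\le g$ with $g\in J_\omega$ forces $h\in J_\omega$. Given $\varepsilon>0$, I would pick $f\in J$ with $\|g-f(\omega)\|_1<\varepsilon$; replacing $f$ by $|f|\in J$ (using that $J$ is solid) I may assume $f\ge 0$, and since $g\ge0$ the inequality $\|g-f(\omega)\|_1<\varepsilon$ is preserved. The key device is to cut $f$ down by a constant: letting $\tilde h\in C(\Delta,L_1)$ be the constant function equal to $h$, the element $k:=f\wedge\tilde h$ is continuous, satisfies $0\le k\le f$, and hence lies in $J$ again by solidity of $J$. Its value at $\omega$ is $k(\omega)=f(\omega)\wedge h$, and using the identity $a-a\wedge b=(a-b)^+$ together with $h\le g$ and monotonicity of $(\cdot)^+$ I would estimate
\[\|h-k(\omega)\|_1=\big\|(h-f(\omega))^+\big\|_1\le\big\|(g-f(\omega))^+\big\|_1\le\|g-f(\omega)\|_1<\varepsilon,\]
so $h$ lies in $\overline{\mathrm{ev}_\omega(J)}=J_\omega$, as required.

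For the alternative description, the inclusion $\supseteq$ is a one-line estimate: if $f\in C(\Delta,L_1)$ and $\tilde f\in J$ satisfy $f(\omega)=g$ and $\|f-\tilde f\|<\varepsilon$, then $\|g-\tilde f(\omega)\|_1\le\|f-\tilde f\|<\varepsilon$, so $g\in J_\omega$. The reverse inclusion is precisely the content of Lemma~\ref{l:claim}: for $g\in J_\omega$ and $\varepsilon>0$, choosing $f\in J$ with $\|g-f(\omega)\|_1<\varepsilon$, Lemma~\ref{l:claim} produces $f_1\in C(\Delta,L_1)$ and $f_2\in J$ with $f_1(\omega)=g$ and $\|f_1-f_2\|<2\varepsilon$, and arbitrariness of $\varepsilon$ finishes the argument. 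I expect the only real obstacle to be the solidity step; the trick of intersecting an element of $J$ with the constant function $h$ is what makes it work, while everything else is either a direct unravelling of definitions or a citation of Lemma~\ref{l:claim}.
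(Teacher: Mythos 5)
Your proof is correct, but the key step — solidity of $J_\omega$ — is handled by a genuinely different argument than the paper's. The paper invokes an operator-theoretic fact (Lemma 4.16 of Abramovich--Aliprantis, together with the principal ideal generated by $g$ being a projection band in $L_1$) to produce an operator $T$ on $L_1$ with $0\leq T\leq I_{L_1}$ and $Tg=h$, lifts it pointwise to an operator $\tilde T$ on $C(\Delta,L_1)$, and pushes the approximant $f\in J$ through $\tilde T$; the resulting $\tilde Tf$ lies in $J$ because $|\tilde Tf|\leq|f|$, and it evaluates close to $h$ at $\omega$. You instead stay entirely inside the lattice: after replacing $f$ by $|f|$, you truncate it by the constant function $\tilde h$, note that $k=f\wedge\tilde h\in J$ by solidity of $J$, and estimate $\|h-k(\omega)\|_1=\|(h-f(\omega))^+\|_1\leq\|(g-f(\omega))^+\|_1<\varepsilon$ via the identity $a-a\wedge b=(a-b)^+$ and monotonicity of $(\cdot)^+$. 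Your route is more elementary and self-contained — it uses no band projections, no Dedekind completeness, no external operator lemma — and it works verbatim with $C(\Delta,X)$ for an arbitrary Banach lattice $X$ in place of $L_1$, whereas the paper's argument leans on structure specific to $L_1$. What the paper's approach buys is a reusable dominated-operator technique ($0\leq T\leq I$ lifted pointwise), which is a natural tool elsewhere, but for this lemma your truncation trick is arguably cleaner. The remaining ingredients — identifying $J_\omega$ as $\overline{\mathrm{ev}_\omega(J)}$, deducing closedness, linearity and closure under $|\cdot|$, reducing the general case to the positive case via $h=h^+-h^-$ with $0\leq h^\pm\leq|g|$, and obtaining the alternative description from Lemma~\ref{l:claim} plus the trivial estimate $\|g-\tilde f(\omega)\|_1\leq\|f-\tilde f\|$ — coincide with the paper's in substance.
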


\begin{proof}
Clearly, $J_\omega$ is a closed linear subspace of $L_1$. Note also that if $g\in J_\omega$, then $|g|\in J_\omega$. Indeed, if $f\in J$ is such that $\|g-f(\omega)\|_1<\varepsilon$, then $|f|\in J$ satisfies $\||g|-|f|(\omega)\|_1\leq \|g-f(\omega)\|_1<\varepsilon$.

Now let $g\in J_\omega^+$ and let $0\leq h\leq g$. We claim that $h\in J_\omega$. Indeed, using \cite[Lemma 4.16]{AA}, and the fact that the ideal generated by $g$ in $L_1$ is a projection band, then there is $T:L_1\rightarrow L_1$ with $0\leq T\leq I_{L_1}$ and $Tg=h$. We can thus define the operator $\tilde T:C(\Delta,L_1)\rightarrow  C(\Delta, L_1)$ given for $f\in C(\Delta,L_1)$ and $t\in\Delta$ by
$$
(\tilde T f)(t)=T(f(t)).
$$
Clearly, $\|T\|\leq 1$. Therefore, if $f\in J$ is such that $\|g-f(\omega)\|_1<\varepsilon$, then $\tilde Tf\in J$ satisfies that
$$
\|h- \tilde T f(\omega)\|_1=\|Tg-T(f(\omega))\|_1\leq\|T\|\|g-f(\omega)\|_1<\varepsilon.
$$
This shows that $h\in J_\omega$ as claimed.

More generally, if $g\in J_\omega$ and $h\in L_1$ satisfy $|h|\leq |g|$. Then by the first paragraph of the proof, it follows that $|g|\in J_\omega$. Now, since $0\leq h_+\leq |g|$ and $0\leq h_-\leq |g|$, by the previous argument we get that both $h_+,h_-\in J_\omega$. Hence, $h=h_+-h_-\in J_\omega$, and so we have seen that $J_\omega$ is a closed ideal of $L_1$.

Finally, let us see that
$$
J_\omega=\{g\in L_1:\forall \varepsilon>0,\,\exists f\in C(\Delta,L_1),\,\exists \tilde f\in J,\,f(\omega)=g, \,\|f-\tilde f\|<\varepsilon\}.
$$
It is clear that if $g$ satisfies that for every $\varepsilon>0$, there exist $f\in C(\Delta,L_1)$ and $\tilde f\in J$ such that $f(\omega)=g$ and $\|f-\tilde f\|<\varepsilon$, then in particular $\|f(\omega)-\tilde f(\omega)\|<\varepsilon$, so $g=f(\omega)\in J_\omega$. Therefore,
$$
\{g\in L_1:\forall \varepsilon>0,\,\exists f\in C(\Delta,L_1),\,\exists \tilde f\in J,\,f(\omega)=g, \,\|f-\tilde f\|<\varepsilon\} \subset J_\omega.
$$
The converse inclusion follows from Lemma \ref{l:claim}.
\end{proof}

Note that as $J_\omega$ is a closed ideal of $L_1$, then there is a (possibly empty) measurable set $A_\omega$ such that
$$
J_\omega=\{f\in L_1:f\chi_{A_\omega}=0\}.
$$

\begin{proposition}\label{p:idealsCDeltaL1}
If $J$ is a closed ideal in $C(\Delta,L_1)$, then
$$
J=\{f\in C(\Delta,L_1):f(\omega)\in J_\omega,\,\forall \omega\in \Delta\}.
$$
\end{proposition}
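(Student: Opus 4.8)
The plan is to prove the two inclusions separately. The containment $J\subseteq\{f : f(\omega)\in J_\omega\ \forall\omega\}$ is immediate: if $f\in J$, then for each $\omega$ the witness $f$ itself (at distance $0$ from $f(\omega)$) shows $f(\omega)\in J_\omega$. The reverse inclusion is the substantial direction. Fix $f\in C(\Delta,L_1)$ with $f(\omega)\in J_\omega$ for every $\omega\in\Delta$. Since $J$ is closed, it suffices to produce, for each $\varepsilon>0$, an element of $J$ within a fixed multiple of $\varepsilon$ of $f$ in the sup norm.

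First I would localize. For each $\omega\in\Delta$, since $f(\omega)\in J_\omega$, the second description of $J_\omega$ in Lemma~\ref{l:Jomega} provides $h_\omega\in C(\Delta,L_1)$ and $\tilde h_\omega\in J$ with $h_\omega(\omega)=f(\omega)$ and $\|h_\omega-\tilde h_\omega\|<\varepsilon$. Because $f$ and $h_\omega$ are continuous and agree at $\omega$, there is an open neighbourhood $U_\omega\ni\omega$ on which $\|f(t)-h_\omega(t)\|_1<\varepsilon$; combined with the previous estimate this gives $\|f(t)-\tilde h_\omega(t)\|_1<2\varepsilon$ for all $t\in U_\omega$. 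By compactness of $\Delta$, finitely many $U_{\omega_1},\dots,U_{\omega_n}$ cover $\Delta$.

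The globalization step is the heart of the argument. I would choose a continuous partition of unity $\varphi_1,\dots,\varphi_n$ subordinate to this cover (on $\Delta$ one may even take the $\varphi_i$ to be characteristic functions of a finite clopen partition refining the $U_{\omega_i}$), so that $0\le\varphi_i\le1$, $\sum_i\varphi_i=1$, and $\varphi_i(t)>0$ forces $t\in U_{\omega_i}$. Set $F=\sum_{i=1}^n\varphi_i\tilde h_{\omega_i}\in C(\Delta,L_1)$, where $(\varphi_i\tilde h_{\omega_i})(t)=\varphi_i(t)\tilde h_{\omega_i}(t)$. Here the ideal property is crucial: since $0\le\varphi_i\le1$ we have $|\varphi_i\tilde h_{\omega_i}|\le|\tilde h_{\omega_i}|$ pointwise, whence $\varphi_i\tilde h_{\omega_i}\in J$ and therefore $F\in J$. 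Finally, writing $f(t)-F(t)=\sum_i\varphi_i(t)\big(f(t)-\tilde h_{\omega_i}(t)\big)$ and discarding the vanishing terms, each surviving summand has $t\in U_{\omega_i}$, so $\|f(t)-\tilde h_{\omega_i}(t)\|_1<2\varepsilon$ and hence $\|f(t)-F(t)\|_1\le 2\varepsilon\sum_i\varphi_i(t)=2\varepsilon$. Thus $\|f-F\|\le2\varepsilon$ with $F\in J$, and letting $\varepsilon\to0$ together with the closedness of $J$ yields $f\in J$.

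I expect the only delicate point to be this gluing: one must pass from the local approximants $\tilde h_\omega$ to a single global element of $J$, and the mechanism that makes this work is precisely that $J$ is an \emph{ideal}, so that cutting each $\tilde h_{\omega_i}$ down by the scalar bump $\varphi_i$ does not leave $J$. Everything else is routine partition-of-unity patching, and the harmless factor $2$ in the final estimate disappears as $\varepsilon$ is arbitrary.
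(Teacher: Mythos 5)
Your proof is correct, and it departs from the paper's in a genuine way at the gluing step. Both arguments localize identically in substance: for each $\omega$ one obtains an element of $J$ that is uniformly $2\varepsilon$-close to $f$ on a neighbourhood of $\omega$ (the paper takes $g_\omega\in J$ with $\|f(\omega)-g_\omega(\omega)\|_1<\varepsilon$ directly from the definition of $J_\omega$ and uses continuity of $f-g_\omega$; your route through the second description in Lemma~\ref{l:Jomega} yields the same data), and compactness gives a finite cover. The paper then glues with lattice operations: it sets $g=g_{\omega_1}\vee\cdots\vee g_{\omega_n}\in J$ and uses $f\wedge g$ as the approximant, via the estimate $0\le f(t)-f(t)\wedge g(t)\le\bigl(f(t)-g_{\omega_i}(t)\bigr)\vee 0$ on the $i$-th ball. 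You glue instead with a partition of unity and the $C(\Delta)$-module structure, $F=\sum_i\varphi_i\tilde h_{\omega_i}$, the ideal property entering through $|\varphi_i\tilde h_{\omega_i}|\le|\tilde h_{\omega_i}|$; this is the scalar-bump trick of Lemma~\ref{l:claim} promoted from a single point to a finite cover. As for what each buys: your gluing is insensitive to signs, whereas the paper's step ``$f\wedge g\in J$'' genuinely needs $f\ge 0$, since an ideal is not closed under meets with arbitrary outside elements (in $\mathbb{R}^2$ take $J=\mathbb{R}\times\{0\}$, $f=(0,-1)$, $g=(1,0)$; then $f\wedge g=f\notin J$); the paper is thus implicitly invoking the standard reduction $f\in J\iff|f|\in J$ to assume $f$ positive, a step your argument makes unnecessary. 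Conversely, the paper's gluing uses only the vector-lattice operations and no multiplication by elements of $C(\Delta)$; but since Lemma~\ref{l:claim}, on which the description of $J_\omega$ rests, already uses that multiplication, your approach imports nothing foreign to the paper's toolkit.
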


\begin{proof}
Given $J$, let
$$
J_1=\{f\in C(\Delta,L_1):f(\omega)\in J_\omega,\,\forall \omega\in \Delta\}.
$$
By Lemma \ref{l:Jomega}, $J_\omega$ is a closed ideal of $L_1$ for every $\omega\in\Delta$, so it is straightforward to check that $J_1$ is a closed ideal in $C(\Delta,L_1)$. It is also clear that $J\subset J_1$.

For the converse inclusion, let $f\in J_1$ and $\varepsilon>0$. For each $\omega\in \Delta$, there is $g_\omega\in J$ such that
$$
\|f(\omega)-g_\omega(\omega)\|_1<\varepsilon.
$$
By uniform continuity, there exist $\delta_\omega>0$ such that for every $t\in \Delta$ with $d(t,\omega)<\delta_\omega$
$$
\|f(t)-g_\omega(t)\|_1<2\varepsilon.
$$
Let $B_\omega\subset \Delta$ denote the open ball of center $\omega$ and radius $\delta_\omega$.
Since $\Delta$ is compact, there exist $\omega_1,\ldots,\omega_n\in \Delta$ such that
$$
\Delta=\bigcup_{i=1}^n B_{\omega_i}.
$$
Let
$$
g=g_{\omega_1}\vee\ldots \vee g_{\omega_n}.
$$
Clearly, $g\in J$. Thus, if we consider $f\wedge g\in J_1\cap J$, and $t\in B_{\omega_i}$ for some $1\leq i\leq n$, then we have
$$
0\leq f(t)-f(t)\wedge g(t)\leq f(t)- f(t)\wedge g_{\omega_i}(t)=(f(t)-g_{\omega_i}(t))\vee 0.
$$
Therefore,
$$
\|f(t)-f(t)\wedge g(t)\|\leq\|f(t)-g_{\omega_i}(t)\|_1<2\varepsilon.
$$
It follows that
$$
\|f-f\wedge g\|=\sup_{t\in\Delta} \|f(t)-f(t)\wedge g(t)\|_1=\sup_{1\leq i\leq n}\sup_{t\in B_{\omega_i}} \|f(t)-f(t)\wedge g(t)\|_1<2\varepsilon.
$$
As $\varepsilon>0$ was arbitrary and $J$ is closed, it follows that $f\in J$. This finishes the proof.
\end{proof}

We may notice that the assignment $\omega\mapsto A_\omega$ is lower semi-continuous with respect to the $L_1$ norm. In general, it need not be continuous: Indeed, take $\omega_0\in\Delta$ and $(C_n)_{n\in\mathbb N}$ a colection of disjoint clopen subsets of $\Delta$ whose distance to $\omega_0$ tends to zero; let $(S_n)_{n\in \mathbb N}$ be a collection of nested subintervals of $[0,1]$ with $S_n\subsetneq S_{n-1}$ and such that $S=\bigcap_n S_n$ is a non-degenerated closed interval, say $S=[a,b]$; let $S_*=[a_*,B_*]$ be a further non-empty interval with $a<a_*<b_*<b$, and consider the ideal
$$
J=\{f\in C(\Delta,L_1): f(x)\in L_1(S_n),\,\forall \omega\in C_n,\,\forall n\in\mathbb N,\,\text{and }f(\omega_0)\in L_1(S_*)\}.
$$
Since $S_*\subsetneq S$ it follows that the assignment $\omega\mapsto A_\omega$ is not continuous at $\omega_0$.

\begin{theorem}\label{noSobczyk}
There is no (non-zero) separably $\mathcal{BL}$-injective separable Banach lattice.
\end{theorem}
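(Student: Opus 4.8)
The plan is to argue by contradiction: assuming $Z$ is a non-zero separable, separably $\mathcal{BL}$-injective Banach lattice, I would manufacture a \emph{complemented} lattice copy of $L_1[0,1]$ inside $Z$, which then contradicts Proposition~\ref{L1 nosepiny} via Proposition~\ref{projection}. The first step is to realize $Z$ as a quotient of $C(\Delta,L_1)$. Since $Z$ is separable, the universality result of \cite{LLOT} yields a lattice isometric embedding $\iota:Z\To C(\Delta,L_1)$. As $\iota(Z)$ and $C(\Delta,L_1)$ are both separable, separable $\mathcal{BL}$-injectivity applied to the lattice isomorphism $\iota^{-1}:\iota(Z)\To Z$ produces a lattice homomorphism $P:C(\Delta,L_1)\To Z$ extending it; then $P\circ\iota=\mathrm{id}_Z$, so $P$ is a surjective lattice homomorphism and $Z\cong C(\Delta,L_1)/J$ with $J=\ker P$ a closed ideal. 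The structure of $J$ is then governed by Proposition~\ref{p:idealsCDeltaL1}, which presents it fiberwise through the closed ideals $J_\omega=\{g\in L_1:g\chi_{A_\omega}=0\}$.

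Next I would locate a good fiber. Since $Z\neq 0$, we have $J\neq C(\Delta,L_1)$, so by the fiberwise description there is $\omega_0\in\Delta$ with some $f(\omega_0)\chi_{A_{\omega_0}}\neq 0$; this forces $m(A_{\omega_0})>0$, and hence $L_1(A_{\omega_0})$, the band of $L_1$-functions supported on $A_{\omega_0}$ (canonically identified with $L_1/J_{\omega_0}$), is lattice isomorphic to $L_1[0,1]$. The heart of the argument is then to split off this fiber. The evaluate-and-restrict map $E_{\omega_0}:f\mapsto f(\omega_0)\chi_{A_{\omega_0}}$ is a norm-one lattice homomorphism $C(\Delta,L_1)\To L_1(A_{\omega_0})$ that vanishes on $J$ (precisely because, by Proposition~\ref{p:idealsCDeltaL1}, $f\in J$ forces $f(\omega_0)\in J_{\omega_0}$), so it descends to a lattice homomorphism $e_{\omega_0}:Z\To L_1(A_{\omega_0})$. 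In the reverse direction the constant section $S:L_1(A_{\omega_0})\To C(\Delta,L_1)$, sending $g$ (extended by $0$ off $A_{\omega_0}$) to the constant function $\omega\mapsto g$, is a lattice isometric embedding with $E_{\omega_0}\circ S=\mathrm{id}$.

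Composing $S$ with the quotient map $q:C(\Delta,L_1)\To Z$, one checks $e_{\omega_0}\circ(qS)=\mathrm{id}$, whence $qS$ is injective, is in fact an isometry (since $\|e_{\omega_0}\|\leq 1$ squeezes $\|g\|=\|e_{\omega_0}qS(g)\|\leq\|qS(g)\|\leq\|g\|$), and $qS\,e_{\omega_0}$ is an idempotent lattice homomorphism, i.e.\ a norm-one lattice projection of $Z$ onto $qS(L_1(A_{\omega_0}))$. This exhibits a complemented sublattice of $Z$ lattice isomorphic to $L_1[0,1]$. By Proposition~\ref{projection} it inherits separable $\mathcal{BL}$-injectivity from $Z$, so $L_1[0,1]$ would be separably injective, contradicting Proposition~\ref{L1 nosepiny} and finishing the proof.

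The step I expect to demand the most care is the passage to the quotient representation together with the verification that evaluation-and-restriction annihilates $J$: the entire construction of the projection rests on $E_{\omega_0}$ descending to $Z$, and this is exactly where the fiberwise description of ideals in Proposition~\ref{p:idealsCDeltaL1} is indispensable. Once that structural input is in hand, the remainder is routine idempotent bookkeeping; the only minor point to confirm is the identification of $L_1(A_{\omega_0})$ (for $m(A_{\omega_0})>0$) with $L_1[0,1]$ up to lattice isomorphism, so that Proposition~\ref{L1 nosepiny}, an isomorphic invariant statement, applies.
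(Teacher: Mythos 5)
Your proposal is correct and follows essentially the same route as the paper's own proof: embed $Z$ lattice isometrically into $C(\Delta,L_1)$ via \cite{LLOT}, use separable $\mathcal{BL}$-injectivity to obtain a lattice homomorphism projection with kernel a closed ideal $J$, invoke Proposition~\ref{p:idealsCDeltaL1} to find a fiber $A_{\omega_0}$ of positive measure, and split off a complemented copy of $L_1(A_{\omega_0})\cong L_1[0,1]$ via the evaluation/constant-section pair, contradicting Propositions~\ref{projection} and~\ref{L1 nosepiny}. The only cosmetic difference is that you define the evaluation-and-restriction map on $C(\Delta,L_1)$ and let it descend to the quotient, whereas the paper defines the same map $T(f+J)=f(\omega_0)|_{A_{\omega_0}}$ directly on $C(\Delta,L_1)/J$.
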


\begin{proof}
Let us suppose that $Z$ is a non-zero, separable, separably $\mathcal{BL}$-injective Banach lattice. In particular, since $Z$ is separable, by \cite{LLOT}, it is lattice isometric to a sublattice of $C(\Delta,L_1)$. For the sake of simplicity, let us also denote $Z$ the corresponding sublattice of $C(\Delta,L_1)$. Using that $Z$ is separably $\mathcal{BL}$-injective, we can extend the identity on $Z$ to a lattice homomorphism projection $P:C(\Delta, L_1)\rightarrow Z$. Let $J=Ker P$, which is a closed ideal in $C(\Delta,L_1)$ such that $Z$ is lattice isomorphic to the quotient $C(\Delta,L_1)/J$.

By Proposition \ref{p:idealsCDeltaL1}, for each $\omega\in \Delta$, there is a measurable set $A_\omega\subset [0,1]$ such that
$$
J=\{f\in C(\Delta,L_1):f(\omega)\chi_{A_\omega}=0 ,\,\forall \omega\in \Delta\}.
$$

Note that there must exist some $\omega_0\in\Delta$, such that $m(A_{\omega_0})>0$. Otherwise, we would have $J=C(\Delta,L_1)$ which would imply $Z=0$. We claim that $Z$ contains a sublattice isomorphic to $L_1$, which is complemented by a lattice homomorphism projection.

Indeed, let $T:C(\Delta,L_1)/J\rightarrow L_1(A_{\omega_0})$ be the lattice homomorphism given by $T(f+J)=f(\omega_0)|_{A_{\omega_0}}$, that is, the restriction of $f(\omega_0)$ to $A_{\omega_0}$. Note that $S:L_1(A_{\omega_0})\rightarrow C(\Delta,L_1)/J$ given by $S(g)=\overline g+J$, where $\overline g(\omega)=g$ for every $\omega\in \Delta$, defines a lattice homomorphism such that $TS$ is the identity on $L_1(A_{\omega_0})$. Therefore, $ST$ defines a lattice homomorphism projection of $C(\Delta,L_1)/J$ onto a sublattice which is lattice isomorphic to $L_1(A_{\omega_0})$, hence lattice isomorphic to $L_1$.

Thus, since $Z$ is separably $\mathcal{BL}$-injective, by Proposition \ref{projection}, so would be $L_1$. However, this is a contradiction with Proposition \ref{L1 nosepiny} and the proof is finished.
\end{proof}

\section*{Acknowledgements}
We would like to thank Jos\'e Rodr\'iguez (Murcia) for some valuable hints leading us to the discovery of Kellerer's work, and to Grzegorz Plebanek (Wroclaw) who indicated to us how to get Theorem~\ref{L1typekappa} for $\kappa<\mathfrak{c}$.

Both authors were supported by project 20797/PI/18 by Fundaci\'{o}n S\'{e}neca, ACyT Regi\'{o}n de Murcia. Antonio Avil\'{e}s was also supported by project MTM2017-86182-P (Government of Spain, AEI/FEDER, EU). P. Tradacete gratefully acknowledges support by Agencia Estatal de Investigaci\'on (AEI) and Fondo Europeo de Desarrollo Regional (FEDER) through grants MTM2016-76808-P (AEI/FEDER, UE) and MTM2016-75196-P (AEI/FEDER, UE), as well as Grupo UCM 910346, Ministerio de Ciencia e Innovaci\'on, through the ``Severo Ochoa Programme for Centres of Excellence in R\&D'' (CEX2019-000904-S) and Consejo Superior de Investigaciones Cient\'ificas (CSIC), through  ``Ayuda extraordinaria a Centros de Excelencia Severo Ochoa'' (20205CEX001).


\begin{thebibliography}{999}

\bibitem{AA} Y. A. Abramovich, C. D. Aliprantis, An invitation to operator theory. Graduate Studies in Mathematics. 50. American Mathematical Society, 2002.

\bibitem{ABrech}  A. Avil\'es, C. Brech, A Boolean algebra and a Banach space obtained by push-out iteration. Topology Appl. 158 (2011), no. 13, 1534--1550.

\bibitem{sepiny}  A. Avil\'es, F. Cabello S\'anchez, J. M. F. Castillo, M. Gonz\'alez, Y. Moreno, Separably injective Banach spaces. Lecture Notes in Mathematics, 2132. Springer, 2016.

\bibitem{ART} A. Avil\'es, J. Rodr\'iguez, P. Tradacete, The free Banach lattice generated by a Banach space. J. Funct. Anal.  274 (2018), no. 10, 2955--2977.

\bibitem{Bernau} S. J. Bernau, Extension of vector lattice homomorphisms. J. London Math. Soc. (2) 33 (1986), no. 3, 516--524.

\bibitem{Cartwright} D. I. Cartwright, Extensions of positive operators between Banach lattices. Mem. Amer. Math. Soc. 3 (1975), no. 164.

\bibitem{Cohn} D. Cohn. Measure theory, volume 5. Birkh\"{a}user Basel, second edition, 2013.

\bibitem{dePW} B. de Pagter, A. W. Wickstead, Free and projective Banach lattices. Proc. Roy. Soc. Edinburgh Sect. A 145 (2015), no. 1, 105--143.

\bibitem{fabianetal} M. Fabian, P. Habala, P. Hájek, V. Montesinos Santalucía, J. Pelant, V. Zizler, Functional analysis and infinite-dimensional geometry. CMS Books in Mathematics/Ouvrages de Math\'ematiques de la SMC, 8. Springer-Verlag, New York, 2001.

\bibitem{FLMT} V. Ferenczi, J. L\'opez-Abad, B. Mbombo, S. Todorcevic, Amalgamation and Ramsey properties of $L_p$ spaces. Adv. Math. 369 (2020), 107190, 76 pp.

\bibitem{GT} A. J. Goldman and A. W. Tucker, Polyhedral convex cones, in Linear inequalities and related systems (eds. H. W. Kuhn, A. W. Tucker), Math. Stud. 88 (1956), 19--40.

\bibitem{HT} G. Hansel, J.-P. Troallic, Sur le probl\`{e}me des marges, Probab. Th. Rel. Fields 71, 357--366 (1986).

\bibitem{Haydon} R. Haydon, Injective Banach lattices. Math. Z. 156 (1977), no. 1, 19--47.

\bibitem{Iwanik} A. Iwanik, Pointwise induced operators on $L_p$-spaces. Proc. Amer. Math. Soc. 58 (1976), 173--178.

\bibitem{Kalton} N. J. Kalton, The endomorphisms of $L_p$ ($0 \leq p \leq 1$), Indiana Univ. Math. J. 27 (1978), 353--381. 

\bibitem{Kechris} A. S. Kechris, Classical descriptive set theory. Graduate Texts in Mathematics, 156. Springer-Verlag, New York, 1995.

\bibitem{kellerer} H. G. Kellerer, Masstheoretische Marginalprobleme. Math. Ann. 153 (1964), 168--198.

\bibitem{Kostana} Z. Kostana, On countably saturated linear orders and certain class of countably saturated graphs. Arch. Math. Logic (2020).

\bibitem{Kusraev} A. G. Kusraev,  On the classification of injective Banach lattices. Dokl. Math. 88 (2013), no. 3, 630--633.

\bibitem{Lacey} H. E. Lacey, The isometric theory of classical Banach spaces. Die Grundlehren der mathematischen Wissenschaften, Band 208. Springer-Verlag, New York-Heidelberg, 1974

\bibitem{LLOT} D. H. Leung, L. Li, T. Oikhberg, M. A. Tursi, Separable universal Banach lattices. Israel J. Math. 230 (2019), No. 1, 141--152.

\bibitem{LT2} J. Lindenstrauss, L. Tzafriri. Classical Banach spaces II, Springer-Verlag, Berlin, 1977.

\bibitem{LT:81} J. Lindenstrauss, L. Tzafriri, On the isomorphic classification of injective Banach lattices. Mathematical analysis and applications, Part B, pp. 489--498, Adv. in Math. Suppl. Stud., 7b, Academic Press, New York-London, 1981.

\bibitem{Lip} Z. Lipecki, Extension of vector-lattice homomorphisms revisited. Nederl. Akad. Wetensch. Indag. Math. 47 (1985), no. 2, 229--233.

\bibitem{Lotz} H. P. Lotz, Extensions and liftings of positive linear mappings on Banach lattices. Trans. Amer. Math. Soc. 211 (1975), 85--100.

\bibitem{Lupini} M. Lupini, Fra\"issé limits in functional analysis, Adv. Math. 338 (2018) 93--174.

\bibitem{Lux-Schep} W. A. J. Luxemburg, A. R. Schep, An extension theorem for Riesz homomorphisms, Indag. Math. 41(1979), 145--154.

\bibitem{Maharam42} D. Maharam, On homogeneous measure algebras. Proc. Nat. Acad. Sci. U.S.A. 28 (1942), 108--111. 

\bibitem{Maharam} D. Maharam, Consistent extension of linear functionals and of probability measures, Proceedings of the Sixth Berkeley Symposium on Mathematical Statistics and Probability (Univ. California, Berkeley, Calif., 1970/1971), Vol. II: Probability theory, pp. 127--147. Univ. California Press, Berkeley, Calif., 1972.

\bibitem{Mangheni} P. J. Mangheni, The classification of injective Banach lattices. Israel J. Math. 48 (1984), no. 4, 341--347.

\bibitem{M-N} P. Meyer-Nieberg, Banach lattices. Springer, 1991.

\bibitem{Rao} K. P. S. Bhaskara Rao, M. Bhaskara Rao,
Theory of charges. A study of finitely additive measures. Pure and Applied Mathematics, 109. Academic Press, Inc., New York, 1983.

\bibitem{Tursi} M. A. Tursi, A Separable Universal Homogeneous Banach Lattice, arXiv:2008.06658 

\bibitem{Weyl} H. Weyl, The elementary theory of convex polyhedra, Annals of Mathematics Studies 24 (1950), pp. 3-18 .

\bibitem{Zippin} M. Zippin, The separable extension problem. Israel J. Math. 26 (1977), no. 3-4, 372--387.
\end{thebibliography}
\end{document}